\documentclass[letterchapter,11pt]{article}

\usepackage{latexsym}
\usepackage{amssymb}
\usepackage{amsmath}
\usepackage{graphicx}
\usepackage[english]{babel}

\usepackage{fontenc}
\usepackage{amsfonts}
\usepackage{amsmath}
\usepackage{amsthm}
\usepackage{enumerate}
\usepackage{textcomp}
\usepackage{geometry}
\usepackage{mathrsfs}
\usepackage{natbib}
\usepackage{fancyhdr}

\usepackage[latin1]{inputenc}

\usepackage{amsfonts}
\usepackage{amsthm}
\usepackage{latexsym}

\newcommand{\RR}{{\rm I}\kern-0.18em{\rm R}}
\newcommand{\h}{{\rm I}\kern-0.18em{\rm H}}
\newcommand{\K}{{\rm I}\kern-0.18em{\rm K}}

\newcommand{\1}{{\rm 1}\kern-0.22em{\rm I}}


\setlength{\oddsidemargin}{-0.7 cm} \setlength{\evensidemargin}{-0.7
cm} \setlength{\topmargin}{0mm} \setlength{\headheight}{0.5 cm}
\setlength{\headsep}{0.5 cm} \setlength{\textwidth}{17. cm}
\setlength{\textheight}{22. cm}

\font\tenmath=msbm10 \font\sevenmath=msbm7 \font\fivemath=msbm5
\newfam\mathfam \textfont\mathfam=\tenmath
\scriptfont\mathfam=\sevenmath \scriptscriptfont\mathfam=\fivemath

\font\teneusb=eusb10 \font\seveneusb=eusb7 \font\fiveeusb=eusb5
\newfam\eusbfam \textfont\eusbfam=\teneusb
\scriptfont\eusbfam=\seveneusb \scriptscriptfont\eusbfam=\fiveeusb
\def\eusb{\fam\eusbfam}
\def\frak R{\eusb R}

\font\tenams=msam10 \font\sevenams=msam7 \font\fiveams=msam5
\newfam\amsfam \textfont\amsfam=\tenams
\scriptfont\amsfam=\sevenams \scriptscriptfont\amsfam=\fiveams

\newtheorem{rem}{Remark}

\newtheorem{corr}{Corollary}
\newtheorem{lem}{Lemma}

\newtheorem{theo}{Theorem}
\newtheorem{defi}{Definition}
\newtheorem{fact}{Fact}

    \font\tenbifull=cmmib10 
    \font\tenbimed=cmmib7
    \font\tenbismall=cmmib5
\textfont9=\tenbifull \scriptfont9=\tenbimed
\scriptscriptfont9=\tenbismall

\mathchardef\bbGamma="7000 \mathchardef\bbDelta="7001
\mathchardef\bbPhi="7002 \mathchardef\bbAlpha="7003
\mathchardef\bbXi="7004 \mathchardef\bbPi="7005
\mathchardef\bbSigma="7006 \mathchardef\bbUpsilon="7007
\mathchardef\bbTheta="7008 \mathchardef\bbPsi="7009
\mathchardef\bbOmega="700A \mathchardef\bbalpha="710B
\mathchardef\bbbeta="710C \mathchardef\bbgamma="710D
\mathchardef\bbdelta="710E \mathchardef\bbepsilon="710F
\mathchardef\bbzeta="7110 \mathchardef\bbeta="7111
\mathchardef\bbtheta="7112 \mathchardef\bbiota="7113
\mathchardef\bbkappa="7114 \mathchardef\bblambda="7115
\mathchardef\bbmu="7116 \mathchardef\bbnu="7117
\mathchardef\bbxi="7118 \mathchardef\bbpi="7119
\mathchardef\bbrho="711A \mathchardef\bbsigma="711B
\mathchardef\bbtau="711C \mathchardef\bbupsilon="711D
\mathchardef\bbphi="711E \mathchardef\bbchi="711F
\mathchardef\bbpsi="7120 \mathchardef\bbomega="7121
\mathchardef\bbvarepsilon="7122 \mathchardef\bbvartheta="7123
\mathchardef\bbvarpi="7124 \mathchardef\bbvarrho="7125
\mathchardef\bbvarsigma="7126 \mathchardef\bbvarphi="7127


\usepackage[colorlinks=true,linkcolor=red,citecolor=blue]{hyperref}
\date{}



\geometry{left=2.2cm,right=2.2cm,top=2.2cm,bottom=2.2cm}
\numberwithin{equation}{section}

\begin{document}

\author{Dimbihery Rabenoro\\ \scriptsize{INRIA Sophia Antipolis, France}}

\title{A conditional limit theorem for independent random variables}

\maketitle

\section{Introduction}

\subsection{Context and Scope}

Let $(X_{j})_{j \geq 1}$ be a sequence of independent random variables  valued in $\mathbb{R}$, of common support $\mathcal{S}_{X}$. Assume that for all $j \geq 1$, $X_{j}$ is absolutely continuous with respect to the Lebesgue measure. Let $a \in \mathcal{S}_{X}$. For $n \geq 1$ and $1 \leq k < n$, let $Q_{nak}$ be the conditional distribution of $X_{1}^{k} := (X_{j})_{1 \leq j \leq k}$ given $\left\{ \frac{S_{1,n}}{n} = a \right\}$, where $S_{1,n} := \sum\limits_{j=1}^{n} X_{j}$. Under suitable conditions on the distributions of the $X_{j}$'s, we have obtained in \cite{Rabenoro 2018} that for any sequence $(k_{n})$ such that, as $n \rightarrow \infty$, $k_{n} \rightarrow \infty$ and $k_{n}= \circ(n)$,   
\begin{equation*}
d_{TV} \left( Q_{nak_{n}}~;~\widetilde{P}_{1}^{k_{n}} \right) 
= \mathcal{O} \left( \frac{k_{n}}{n} \right), 
\end{equation*}

\noindent
where $d_{TV}$ is the total variation distance and $\widetilde{P}_{1}^{k_{n}}$ is a product of Gibbs measures $\gamma_{j}^{a}$, $1 \leq j \leq k_{n}$. Then, a natural question arises: What can be said about the distribution of the $n-k_{n}$ other r.v.'s, that is of $(X_{j})_{ k_{n}+1 \leq j  \leq n}$, given $\left\{ \frac{S_{1,n}}{n} = a \right\}$ ? Set $k_{n}':=n-k_{n}$, so that $\frac{k_{n}'}{n} \rightarrow 1$ as $n \rightarrow \infty$.

\noindent\\
In this paper, we thus study the asymptotic distribution of $Q_{nak_{n}}$ if $\frac{k_{n}}{n}$ is allowed to converge to 1 as $n \rightarrow \infty$. As expected (see \cite{Dembo and Zeitouni 1996}), we do not obtain here a Gibbs type measure as an approximation of $Q_{nak_{n}}$, since the condition $\frac{k_{n}}{n} = \circ(n)$ is neccessary for that.

\subsection{Importance Sampling framework}

Consider a sequence $(X_{j})_{j \geq 1}$ of r.v.'s. For large $n$ but \textit{fixed}, one intends to estimate 
\begin{equation}
 \Pi_{n} := P(X_{1}^{n} \in \mathcal{E}_{n}), \quad \textrm{for some event} \enskip \mathcal{E}_{n}. 
\end{equation}

\noindent
Let $p_{1}^{n}$ be the density of $X_{1}^{n}$. For any density $q_{1}^{n}$ on $\mathbb{R}^{n}$, the classical Importance Sampling (IS) estimator of $\Pi_{n}$ is the following: 
\begin{equation} 
\label{ISestimator}
\widehat{\Pi}_{n}(N) := \frac{1}{N} \sum\limits_{i=1}^{N} \frac{p_{1}^{n}(Y_{1}^{n}(i))}{q_{1}^{n}(Y_{1}^{n}(i))} 
1_{\mathcal{E}_{n}} (Y_{1}^{n}(i)), 
\end{equation}

\noindent
where $(Y_{1}^{n}(i))$ are i.i.d copies of a random vector $Y_{1}^{n}$ sampled from $q_{1}^{n}$. Then, the law of large numbers insures that $\widehat{\Pi}_{n}(N)$ converges almost surely to $\Pi_{n}$, as $N \rightarrow \infty$. Compared to the usual Monte Carlo method, the interest of this resampling procedure is to reduce the variance of the resulting estimator. It is well known that the density $q_{1}^{n}$ for which the variance of $\widehat{\Pi}_{n}(N)$ is minimal is the conditional density $p(X_{1}^{n} | \mathcal{E}_{n})$. However, this density involves $\Pi_{n}$ itself. So, it is natural to search an  approximation of $p(X_{1}^{n} | \mathcal{E}_{n})$, which could be approached itself by $p(X_{1}^{k_{n}} | \mathcal{E}_{n})$, for $k_{n}$ satisfying $\frac{k_{n}}{n} \rightarrow 1$ as $n \rightarrow \infty$.

\noindent\\
This method has been developed in \cite{Broniatowski and Ritov 2009}, for an i.i.d. sequence  $(X_{j})_{j \geq 1}$ of centered r.v.'s, with 
\begin{equation}
{\mathcal{E}_{n}} = \left\{  (x_{i})_{1 \leq i \leq n} \in \mathbb{R}^{n} : \sum\limits_{i=1}^{n} x_{i} \geq na_{n}  \right\},
\end{equation}

\noindent
for some sequence $(a_{n})$ converging slowly to 0. Therefore, $\widehat{\Pi}_{n}(N)$ estimates the moderate deviation probability of $\frac{S_{1,n}}{n}$. Let $Q'_{n}$ be the distribution of $X_{1}^{k_{n}}$ given $S_{1,n} \geq na_{n}$. Then, it is established in \cite{Broniatowski and Ritov 2009} that, for some density $g_{k_{n}}$ on $\mathbb{R}^{k_{n}}$, 
\begin{equation}
\label{theoRitov}
p\left( \left. X_{1}^{k_{n}} = Y_{1}^{k_{n}} \right|  S_{1,n} \geq na_{n} \right) \approx g_{k_{n}}(Y_{1}^{k_{n}}), 
\quad \textrm{where} \quad
Y_{1}^{k_{n}} \textrm{ is sampled from } Q'_{n}. 
\end{equation}

\noindent
The precise sense of $\approx$ is given in Section $\ref{landauNotations}$ below. One deduces from a reverse lemma that 
\begin{equation}
g_{k_{n}}(Z_{1}^{k_{n}}) \approx p\left( \left. X_{1}^{k_{n}} = Z_{1}^{k_{n}} \right|  S_{1,n} \geq na_{n} \right), 
\quad \textrm{where } 
Z_{1}^{k_{n}} \textrm{ is sampled from } g_{k_{n}}. 
\end{equation}

\noindent
Now, it turns out that $g_{k_{n}}$ has a computable expression, which allows to simulate $Z_{1}^{k}$. From $g_{k_{n}}$, one constructs a density 
$\overline{g}_{n}$ on $\mathbb{R}^{n}$ which is also computable. In $(\ref{ISestimator})$, $q_{1}^{n}$ and $(Y_{1}^{n}(i))$ are replaced respectively by $\overline{g}_{n}$ and copies of a r.v. with density $\overline{g}_{n}$. The IS estimator obtained has better performances than the existing ones which estimate $\Pi_{n}$. Now, it is reasonable to expect that $(\ref{theoRitov})$ implies that the distribution of $X_{1}^{k_{n}}$ given $\left\{S_{1,n} \geq na_{n} \right\}$ is close to the distribution associated to $g_{k_{n}}$.

\subsection{Our result}

\textit{In the sequel, we simply write $k$ instead of $k_{n}$}.

\noindent\\
We can use these IS ideas to get an approximation of $Q_{nak}$ for some $k$ such that $\frac{k}{n} \rightarrow 1$ (see Theorem $\ref{largekTheorem}$), but also for a class of $k$ which are some $o(n)$ (see Theorem $\ref{smallkTheorem}$). However, in both cases, the condition $n-k \rightarrow \infty$ is required for the Edgeworth expansions.  

\noindent\\
We consider a sequence $(X_{j})_{j \geq 1}$ of \textit{independent} r.v.'s. For any $a \in \mathcal{S}_{X}$, let $p\left( \left. X_{1}^{k} = \cdot \right|  S_{1,n} = na \right)$ be the density of $X_{1}^{k}$ given $\left\{ S_{1,n} = na \right\}$. In this paper, we obtain that, for some density $g_{k}$ on $\mathbb{R}^{k}$,
\begin{equation}
p\left( \left. X_{1}^{k} = Y_{1}^{k} \right|  S_{1,n} = na \right) \approx g_{k}(Y_{1}^{k}), 
\quad \textrm{where} \quad
Y_{1}^{k} \sim \mathcal{L} \left( \left. X_{1}^{k}  \right|  S_{1,n} = na \right). 
\end{equation}

\noindent
We deduce (see Section 2.4) that 
\begin{equation}
\left\| Q_{nak} -  G_{k}  \right\|_{TV} \longrightarrow 0 
\quad \textrm{as} \enskip n \rightarrow \infty, 
\end{equation}

\noindent
where $G_{k}$ is the distribution associated to $g_{k}$. More precisely, when $k$ is small ($k = o(n^{\rho})$ with 
$0 < \rho < 1/2$), $G_{k}$ is the same Gibbs type measure as in the preceding chapter, while for large $k$ (see the assumptions of Theorem $\ref{largekTheorem}$), $G_{k}$ is a slight modification of this measure. 

\noindent\\
Kolmogorov's extension theorem does not apply to the sequence $(Q_{nan})_{n \geq 1}$ of probability measures. Therefore, we need to consider a sequence $((\Omega_{n},\mathcal{A}_{n}, \mathcal{P}_{n}))_{n \geq 1}$ of probability spaces s.t. for any $n \geq 1$, $Y_{1}^{n}$ is a random vector defined on $(\Omega_{n},\mathcal{A}_{n}, \mathcal{P}_{n})$ and the distribution of $Y_{1}^{n}$ is $Q_{nan}$. Then, for $k \leq n$, $Q_{nak}$ is the distribution of $Y_{1}^{k}$. The properties of $(Y_{1}^{n})_{n \geq 1}$ are studied in Section 3, after some elementary results and statement of the Assumptions in Section 2, while Section 4 is devoted to our main Results and their proofs.

\section{Assumptions and elementary results}

All the r.v.'s considered are a.c. w.r.t. the Lebesgue measure on $\mathbb{R}$. For any r.v. $X$, let $P_{X}$ be its distribution, $p_{X}$ its density and $\Phi_{X}$ its moment generating function (mgf). For any $j \geq 1$, set 
\begin{equation}
P_{j} := P_{X_{j}} \quad ; \quad p_{j} := p_{X_{j}} \quad ; \quad 
\Phi_{j} := \Phi_{X_{j}}. 
\end{equation}

\subsection{Conditional density}

Let $U$ and $V$ be r.v.'s having respective densities $p_{U}$ and $p_{V}$ and a joint density denoted by $p_{(U,V)}$. Then, there exists a conditional density of $U$ given $V$, denoted as follows. 
\begin{equation*}
p\left( \left. U = u \right| V = v \right) = 
\frac{ p_{(U,V)} \left(u, v \right)}{p_{V}(v)}. 
\end{equation*}

\begin{fact}
\label{densCond}
Let $(X_{j})_{j \geq 1}$ be a sequence of independent r.v.'s. For any $n \geq 1$ and $1 \leq i \leq n$, let $J_{n}$ be a subset of $\left\{ i, ..., n \right\}$ s.t. $\alpha_{n} := \left| J_{n} \right| < n-i+1$. Let $L_{n}$ be the complement of $J_{n}$ in $\left\{ i, ..., n \right\}$. Set $S_{L_{n}} := \sum\limits_{j \in L_{n}} X_{j}$. Then, there exists a conditional density of $(X_{j})_{j \in J_{n}}$ given $S_{i,n}$, defined by 
\begin{equation}
\label{densCondFormule}
p\left( ( \left. X_{j} )_{j \in J_{n}} = (x_{j}) \right| S_{i,n} = s \right) = 
\frac{ \left\{ \prod\limits_{j \in J_{n}} p_{j}(x_{j}) \right\} 
p_{S_{L_{n}}} \left( s - \sum\limits_{j \in J_{n}} x_{j} \right) }
{p_{S_{i,n}}\left( s \right)}, 
\end{equation}
\end{fact}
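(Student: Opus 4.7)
The plan is to derive the formula directly from the definition of conditional density, exploiting the independence of the disjoint blocks $(X_{j})_{j \in J_{n}}$ and $(X_{j})_{j \in L_{n}}$.

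First, I would write $S_{i,n} = T + S_{L_{n}}$, where $T := \sum_{j \in J_{n}} X_{j}$, and observe that the random vector $(X_{j})_{j \in J_{n}}$ and the scalar $S_{L_{n}}$ are independent, since the index sets $J_{n}$ and $L_{n}$ are disjoint and the whole family $(X_{j})$ is independent. The joint density of $\bigl((X_{j})_{j \in J_{n}}, S_{L_{n}}\bigr)$ on $\mathbb{R}^{\alpha_{n}} \times \mathbb{R}$ therefore factors as $\left\{\prod_{j \in J_{n}} p_{j}(x_{j})\right\} p_{S_{L_{n}}}(t)$. Here I use the fact that $S_{L_{n}}$ admits a density, which follows from the assumption $\alpha_{n} < n-i+1$ (i.e. $L_{n}$ is non-empty) together with the absolute continuity of each $X_{j}$.

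Next, I would compute the joint density of $\bigl((X_{j})_{j \in J_{n}}, S_{i,n}\bigr)$ via the change of variables $\bigl((x_{j})_{j \in J_{n}}, t\bigr) \mapsto \bigl((x_{j})_{j \in J_{n}}, s\bigr)$ with $s = t + \sum_{j \in J_{n}} x_{j}$. This map has Jacobian equal to $1$, so
\begin{equation*}
p_{((X_{j})_{j \in J_{n}}, S_{i,n})}\bigl((x_{j}), s\bigr)
= \left\{\prod_{j \in J_{n}} p_{j}(x_{j})\right\} p_{S_{L_{n}}}\!\Bigl(s - \sum_{j \in J_{n}} x_{j}\Bigr).
\end{equation*}
Dividing by the marginal density $p_{S_{i,n}}(s)$ then yields the claimed formula $(\ref{densCondFormule})$.

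The only mildly delicate points are measure-theoretic: one needs to check that $p_{S_{i,n}}(s) > 0$ at the values of $s$ of interest so that the conditional density is well-defined, and that the Lebesgue-a.e. version produced by the above factorization is indeed a regular conditional density in the sense stated. Both are standard once a jointly measurable version of $p_{S_{L_{n}}}$ is fixed, so no serious obstacle arises; the statement is essentially a direct application of the disintegration formula combined with independence.
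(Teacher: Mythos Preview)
Your argument is correct and is precisely the standard derivation: factor the joint density of $\bigl((X_{j})_{j\in J_{n}},S_{L_{n}}\bigr)$ by independence, apply the unit-Jacobian change of variables $t\mapsto s=t+\sum_{j\in J_{n}}x_{j}$, and divide by $p_{S_{i,n}}(s)$. The paper does not give a proof of this Fact at all; it simply records the general formula $p(U=u\mid V=v)=p_{(U,V)}(u,v)/p_{V}(v)$ immediately before the statement and leaves the specialization to the reader, so your write-up is exactly the intended (and only natural) argument.
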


\subsection{The tilted density}

\begin{defi}
For a r.v. $X$, let $\Phi_{X}$ be its mgf and let  
$\Theta_{X} := \left\{ \theta \in \mathbb{R} : \Phi_{X}(\theta ) < \infty \right\}$. For any $\theta \in \Theta_{X}$, denote by $\widetilde{X}^{\theta }$ a random vector having the tilted density, defined by 
\begin{equation}
p_{\widetilde{X}^{\theta}}(x) := \frac{(\exp \theta x) p_{X}(x)}{\Phi_{X}(\theta)}. 
\end{equation}
\end{defi}

\noindent\\
For any $j \geq 1$, set $\Phi_{j} := \Phi_{X_{j}}$. We suppose throughout the text that the functions $(\Phi_{j})_{j \geq 1}$ have the same domain of finiteness denoted by $\Theta$, which is assumed to be of non void interior. We write, for any $j \geq 1$, 
\begin{equation*}
\Theta := \left\{ \theta \in \mathbb{R}^{d} : \Phi_{j} ( \theta) < \infty \right\}. 
\end{equation*}

\bigskip

\begin{fact}
For any $j \geq 1$, there exists a probability space 
$(\Omega^{\theta}, \mathcal{A}^{\theta}, \mathcal{P}^{\theta})$ such that for all finite subset $J \subset \mathbb{N}$ and for all 
$(B_{j})_{j \in J} \in \mathcal{B}(\mathbb{R})^{|J|}$, 
\begin{equation}  
\mathcal{P}^{\theta} \left( \left(\widetilde{X}_{j}^{\theta}\right)_{j \in J} \in 
(B_{j})_{j \in J} \right) =
\prod_{j \in J} \widetilde{P}_{j}^{\theta}(B_{j}) =
\prod_{j \in J} \int\limits_{B_{j}} \widetilde{p}_{j}^{\theta}(x)dx, 
\end{equation}

\noindent
where $\widetilde{P}_{j}^{\theta} := P_{\widetilde{X}_{j}^{\theta}}$ and 
$\widetilde{p}_{j}^{\theta} := p_{\widetilde{X}_{j}^{\theta}}$. In other words, 
$\left(\widetilde{X}_{j}^{\theta}\right)_{j \geq 1}$ is a sequence of independent r.v.'s defined on $(\Omega^{\theta}, \mathcal{A}^{\theta}, \mathcal{P}^{\theta})$. 
\end{fact}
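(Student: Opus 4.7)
The plan is to realize the sequence $(\widetilde{X}_{j}^{\theta})_{j \geq 1}$ as coordinate projections on an infinite product probability space, invoking Kolmogorov's existence/extension theorem to guarantee that the relevant product measure is well defined.

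First, I would check that for every $j \geq 1$ the function $\widetilde{p}_{j}^{\theta}$ is indeed a probability density. Nonnegativity is immediate since both $e^{\theta x}$ and $p_{j}(x)$ are nonnegative. The normalization is built into the definition: because $\theta \in \Theta$, we have $\Phi_{j}(\theta) = \int_{\mathbb{R}} e^{\theta x} p_{j}(x)\, dx < \infty$, so
\begin{equation*}
\int_{\mathbb{R}} \widetilde{p}_{j}^{\theta}(x)\, dx \;=\; \frac{1}{\Phi_{j}(\theta)} \int_{\mathbb{R}} e^{\theta x} p_{j}(x)\, dx \;=\; 1.
\end{equation*}
Hence each $\widetilde{P}_{j}^{\theta}$ is a Borel probability measure on $\mathbb{R}$.

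Next, I would set $(\Omega^{\theta}, \mathcal{A}^{\theta}) := \bigl( \mathbb{R}^{\mathbb{N}}, \mathcal{B}(\mathbb{R})^{\otimes \mathbb{N}} \bigr)$ and define the candidate finite-dimensional distributions: for each finite $J \subset \mathbb{N}$ and each cylinder $\prod_{j \in J} B_{j}$, set
\begin{equation*}
\mu_{J}\Bigl( \prod_{j \in J} B_{j} \Bigr) \;:=\; \prod_{j \in J} \widetilde{P}_{j}^{\theta}(B_{j}).
\end{equation*}
A routine check shows that this family $(\mu_{J})_{J \text{ finite}}$ is projective (Kolmogorov consistent): restricting along coordinate projections $J' \subset J$ amounts to integrating out the extra factors $\widetilde{P}_{j}^{\theta}(\mathbb{R}) = 1$. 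By Kolmogorov's extension theorem there exists a unique probability measure $\mathcal{P}^{\theta}$ on $(\Omega^{\theta}, \mathcal{A}^{\theta})$ whose finite-dimensional marginals are the $\mu_{J}$.

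Finally, I would define $\widetilde{X}_{j}^{\theta} : \Omega^{\theta} \to \mathbb{R}$ as the $j$-th coordinate projection, which is measurable by construction. The announced identity
\begin{equation*}
\mathcal{P}^{\theta}\Bigl( (\widetilde{X}_{j}^{\theta})_{j \in J} \in (B_{j})_{j \in J} \Bigr) \;=\; \prod_{j \in J} \widetilde{P}_{j}^{\theta}(B_{j}) \;=\; \prod_{j \in J} \int_{B_{j}} \widetilde{p}_{j}^{\theta}(x)\, dx
\end{equation*}
is then simply the defining property of $\mathcal{P}^{\theta}$ on cylinders, and the product form directly encodes independence of the family $(\widetilde{X}_{j}^{\theta})_{j \geq 1}$ together with the prescription that $\widetilde{X}_{j}^{\theta}$ has law $\widetilde{P}_{j}^{\theta}$. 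There is no substantial obstacle; the only thing to be careful about is citing Kolmogorov's theorem in its form for countable products of Borel probability spaces (equivalently, Ionescu--Tulcea) so that no topological regularity issue arises — standard since $(\mathbb{R}, \mathcal{B}(\mathbb{R}))$ is a standard Borel space.
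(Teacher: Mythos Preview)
Your argument is correct and is the canonical construction: verify each $\widetilde{p}_{j}^{\theta}$ is a density, form the product system of finite-dimensional laws, invoke Kolmogorov (or directly the countable product measure), and read off independence from the product form on cylinders. The paper itself offers no proof of this Fact --- it is simply asserted --- so there is nothing to compare against beyond noting that your approach is exactly what the author tacitly relies on.
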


\bigskip

\begin{fact}
For any $j \geq 1$, and $\theta \in \Theta$, we have that  
\begin{equation}
\mathbb{E} \left[ \widetilde{X}_{j}^{\theta} \right] = m_{j}(\theta) 
\quad \textrm{where} \quad
m_{j}(\theta) := \frac{d\kappa_{j}}{d\theta}(\theta)
\enskip \textrm{and} \enskip
\kappa_{j}(\theta) := \log \Phi_{j}(\theta).
\end{equation}
\end{fact}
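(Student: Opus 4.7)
The plan is to compute $\mathbb{E}[\widetilde{X}_j^\theta]$ directly from the definition of the tilted density and recognize the result as a logarithmic derivative of $\Phi_j$.

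First I would write
\begin{equation*}
\mathbb{E}\left[\widetilde{X}_j^\theta\right] \;=\; \int_{\mathbb{R}} x\,\widetilde{p}_j^\theta(x)\,dx
\;=\; \frac{1}{\Phi_j(\theta)} \int_{\mathbb{R}} x\,e^{\theta x}\,p_j(x)\,dx,
\end{equation*}
using the definition of $\widetilde{p}_j^\theta$. The next step is to identify the remaining integral as $\Phi_j'(\theta)$, i.e.\ to justify that one may differentiate under the integral sign:
\begin{equation*}
\Phi_j'(\theta) \;=\; \frac{d}{d\theta}\int_{\mathbb{R}} e^{\theta x}\,p_j(x)\,dx
\;=\; \int_{\mathbb{R}} x\,e^{\theta x}\,p_j(x)\,dx.
\end{equation*}
Once this is in place, dividing by $\Phi_j(\theta)$ and applying the chain rule gives
\begin{equation*}
\mathbb{E}\left[\widetilde{X}_j^\theta\right] \;=\; \frac{\Phi_j'(\theta)}{\Phi_j(\theta)}
\;=\; \frac{d}{d\theta}\log \Phi_j(\theta) \;=\; \kappa_j'(\theta) \;=\; m_j(\theta),
\end{equation*}
which is the desired identity.

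The main (and really the only non-routine) point is the interchange of derivative and integral, which I would handle by a standard dominated-convergence argument. Since $\theta$ lies in the interior of $\Theta$ by hypothesis on the domain of finiteness, choose $\varepsilon > 0$ with $[\theta-\varepsilon,\theta+\varepsilon] \subset \Theta$. For all $\eta$ in this interval and all $x \in \mathbb{R}$,
\begin{equation*}
\left| x\,e^{\eta x} p_j(x)\right| \;\leq\; |x|\bigl(e^{(\theta+\varepsilon)x} + e^{(\theta-\varepsilon)x}\bigr)\,p_j(x),
\end{equation*}
and the right-hand side is integrable because $|x| \leq C_\delta (e^{\delta x}+e^{-\delta x})$ for any $\delta > 0$, so a further shrinking of $\varepsilon$ reduces the bound to a multiple of $\widetilde{p}_j^{\theta'}$ for some $\theta' \in \Theta$, which is integrable. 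This legitimizes differentiating $\Phi_j(\theta) = \int e^{\theta x}p_j(x)\,dx$ under the integral sign and completes the proof.

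Beyond this analytic detail, the argument is a one-line symbolic computation; no structural ingredients from the paper's framework are needed beyond the definitions already recorded in the preceding Facts.
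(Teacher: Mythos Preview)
Your argument is correct and is the standard derivation of this classical identity for exponential tilts. The paper itself does not supply a proof: the statement is recorded as a \emph{Fact} and left unproved, relying on its being a well-known property of moment generating functions. Since under assumption $(\mathcal{M}gf)$ the domain $\Theta=(\alpha,\beta)$ is open, every $\theta\in\Theta$ is interior, so your dominated-convergence justification applies at each point; the only minor imprecision is that the dominating function is a finite \emph{sum} of terms of the form $e^{\theta' x}p_j(x)$ with $\theta'\in\Theta$, rather than a single such term, but this does not affect integrability or the conclusion.
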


\bigskip

\begin{fact}
For any $\theta \in \Theta$, $j \geq 1$ and $j' \geq 1$,
\begin{equation}
\mathbb{E}\left[ \widetilde{X_{j}+X_{j'}}^{\theta} \right] = 
\mathbb{E}\left[ \widetilde{X}_{j}^{\theta}+\widetilde{X}_{j'}^{\theta} \right].
\end{equation}

\end{fact}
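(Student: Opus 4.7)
The plan is to evaluate each side of the identity separately and observe that both reduce to $m_{j}(\theta) + m_{j'}(\theta)$. The right-hand side is immediate from linearity of expectation followed by the preceding Fact:
$$\mathbb{E}\left[ \widetilde{X}_{j}^{\theta}+\widetilde{X}_{j'}^{\theta} \right] = \mathbb{E}\left[ \widetilde{X}_{j}^{\theta} \right] + \mathbb{E}\left[ \widetilde{X}_{j'}^{\theta} \right] = m_{j}(\theta) + m_{j'}(\theta).$$

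For the left-hand side, the crucial input is the independence of $X_{j}$ and $X_{j'}$, already built into the framework. Independence yields the factorization $\Phi_{X_{j}+X_{j'}}(\theta) = \Phi_{j}(\theta)\,\Phi_{j'}(\theta)$, and consequently the additivity $\kappa_{X_{j}+X_{j'}}(\theta) = \kappa_{j}(\theta) + \kappa_{j'}(\theta)$ of the cumulant functions. In particular, since $\theta \in \Theta$ lies in the common domain of finiteness, $\Phi_{X_{j}+X_{j'}}(\theta)$ is finite and the tilted random variable $\widetilde{X_{j}+X_{j'}}^{\theta}$ is well defined. Applying the preceding Fact directly to the r.v.\ $X_{j}+X_{j'}$ then gives
$$\mathbb{E}\left[ \widetilde{X_{j}+X_{j'}}^{\theta} \right] = \frac{d\kappa_{X_{j}+X_{j'}}}{d\theta}(\theta) = \frac{d\kappa_{j}}{d\theta}(\theta) + \frac{d\kappa_{j'}}{d\theta}(\theta) = m_{j}(\theta) + m_{j'}(\theta).$$

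There is no genuine obstacle: the statement is a straightforward consequence of the additivity of the cumulant generating function under independence, combined with the identification $\mathbb{E}[\widetilde{X}^{\theta}] = \kappa_{X}'(\theta)$ recorded in the previous Fact. The one point worth emphasizing in the write-up is that independence is essential here—without it, $\Phi_{X_{j}+X_{j'}}$ need not factor, the cumulants need not add, and the identity could fail, even though the right-hand side of the claim is defined and finite in general.
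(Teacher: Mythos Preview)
Your proof is correct. The paper states this as a Fact without proof, treating it as an elementary consequence of the tilting formalism; your argument via additivity of the cumulant generating function under independence, together with the identity $\mathbb{E}[\widetilde{X}^{\theta}] = \kappa_{X}'(\theta)$ from the preceding Fact, is exactly the natural justification.
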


\bigskip

\begin{corr}
For any $n \geq 1$ and $1 \leq \ell \leq n$, for any $\theta \in \Theta$, 
\begin{equation}
\mathbb{E}\left[ \widetilde{S_{\ell,n}}^{\theta} \right] = 
\sum\limits_{j=\ell}^{n} m_{j}(\theta). 
\end{equation}

\end{corr}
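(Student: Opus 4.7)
The plan is a direct induction on the number of summands in $S_{\ell,n}=X_\ell+X_{\ell+1}+\cdots+X_n$, relying only on the two statements that immediately precede the corollary: the identity $\mathbb{E}[\widetilde{X+Y}^{\theta}]=\mathbb{E}[\widetilde{X}^{\theta}+\widetilde{Y}^{\theta}]$ and the identification $\mathbb{E}[\widetilde{X}_j^{\theta}]=m_j(\theta)$. The base case is trivial: when $\ell=n$ the sum has a single term and the identity reduces to the defining fact $\mathbb{E}[\widetilde{X}_n^{\theta}]=m_n(\theta)$.

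For the inductive step, I would set $T_{n-1}:=S_{\ell,n-1}=\sum_{j=\ell}^{n-1}X_j$ and apply the previous Fact with $X=T_{n-1}$ and $Y=X_n$; this yields
\begin{equation*}
\mathbb{E}\bigl[\widetilde{S_{\ell,n}}^{\theta}\bigr]
=\mathbb{E}\bigl[\widetilde{T_{n-1}+X_n}^{\theta}\bigr]
=\mathbb{E}\bigl[\widetilde{T_{n-1}}^{\theta}\bigr]+\mathbb{E}\bigl[\widetilde{X}_n^{\theta}\bigr].
\end{equation*}
Using the induction hypothesis on $\mathbb{E}[\widetilde{T_{n-1}}^{\theta}]=\sum_{j=\ell}^{n-1}m_j(\theta)$ and the identity $\mathbb{E}[\widetilde{X}_n^{\theta}]=m_n(\theta)$ gives the claimed formula $\sum_{j=\ell}^{n}m_j(\theta)$.

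There is essentially no analytic obstacle here; the only subtlety is verifying that the previous Fact, which is stated for two individual variables $X_j,X_{j'}$ from the original sequence, is legitimately applied to the pair $(T_{n-1},X_n)$. This is where I would be slightly careful: the Fact is really a statement about tilting of sums of two independent a.c.\ r.v.'s, so one should note that $T_{n-1}$ and $X_n$ are independent (by the independence of $(X_j)_{j\geq 1}$) and that $T_{n-1}$ is a.c.\ with a well-defined mgf on $\Theta$ (indeed $\Phi_{T_{n-1}}(\theta)=\prod_{j=\ell}^{n-1}\Phi_j(\theta)<\infty$ for $\theta\in\Theta$), so the tilted density $\widetilde{T_{n-1}}^{\theta}$ is well-defined and the Fact applies verbatim. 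Apart from this sanity check, the argument is a one-line induction.
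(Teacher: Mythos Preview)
Your proof is correct and is exactly the natural elaboration of what the paper intends: the paper states this as a corollary with no proof, leaving it as an immediate consequence of the two preceding Facts, and your induction on the number of summands is precisely the obvious way to unpack that. Your remark about extending the second Fact from a pair $(X_j,X_{j'})$ to $(T_{n-1},X_n)$ is well-taken and correctly handled.
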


\bigskip

\begin{fact}
\noindent\\
For any $j \geq 1$ and $\theta \in \Theta$, set 
\begin{equation*}
\overline{X}_{j}^{\theta} := \widetilde{X}_{j}^{\theta} - \mathbb{E}[\widetilde{X}_{j}^{\theta}] = \widetilde{X}_{j}^{\theta} - m_{j}(\theta)
\end{equation*}

\noindent
and for any $\ell \geq 3$, 
\begin{equation*}
s_{j}^{2}(\theta) := Var \left( \widetilde{X}_{j}^{\theta} \right) \quad ; \quad 
\sigma_{j}(\theta) := \sqrt{s_{j}^{2}(\theta)} \quad ; \quad
\mu_{j}^{\ell}(\theta) := \mathbb{E} \left[ \left(\overline{X}_{j}^{\theta} \right)^{\ell} \right] \quad ; \quad
|\mu|_{j}^{\ell}(\theta) := \mathbb{E} \left[ \left|\overline{X}_{j}^{\theta} \right|^{\ell} \right]. 
\end{equation*}

\noindent
Then, 
\begin{equation}
s_{j}^{2}(\theta) = \frac{d^{2} \kappa_{j}}{d\theta^{2}}(\theta) \quad \textrm{and} \quad
\mu_{j}^{\ell}(\theta) = \frac{d^{\ell} \kappa_{j}}{d \theta^{\ell}}(\theta). 
\end{equation}
\end{fact}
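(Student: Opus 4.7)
The plan is to derive both identities by iterated differentiation under the integral sign in the definition of $\Phi_j$, using that $\Phi_j$ is real-analytic on the interior of $\Theta$ (this is the role of the non-empty-interior hypothesis). First I would establish the auxiliary identity
\begin{equation*}
\Phi_j^{(\ell)}(\theta) \;=\; \int_{\mathbb{R}} x^{\ell} e^{\theta x} p_j(x)\,dx \;=\; \Phi_j(\theta)\,\mathbb{E}\bigl[(\widetilde{X}_j^{\theta})^{\ell}\bigr]
\end{equation*}
for every $\ell \geq 1$ and every $\theta \in \mathrm{int}(\Theta)$, by a standard dominated-convergence argument on a compact neighbourhood $[\theta - \eta, \theta + \eta] \subset \Theta$; an integrable majorant is $|x|^{\ell} e^{\theta x} p_j(x) \leq C_{\ell,\eta}\bigl(e^{(\theta+\eta)x} + e^{(\theta-\eta)x}\bigr) p_j(x)$, whose integral is finite since both shifted mgf's are.

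For the variance identity, a direct computation using the quotient rule gives
\begin{equation*}
\kappa_j''(\theta) \;=\; \frac{\Phi_j''(\theta)}{\Phi_j(\theta)} - \left(\frac{\Phi_j'(\theta)}{\Phi_j(\theta)}\right)^{\!2} \;=\; \mathbb{E}\bigl[(\widetilde{X}_j^{\theta})^{2}\bigr] - m_j(\theta)^{2} \;=\; s_j^{2}(\theta).
\end{equation*}
For the higher-order formula $\mu_j^{\ell}(\theta) = \kappa_j^{(\ell)}(\theta)$, I would proceed by induction on $\ell$: assuming the result at order $\ell-1$, differentiate $\kappa_j^{(\ell-1)}$ once more and reorganise the resulting rational expression in the ratios $\Phi_j^{(r)}/\Phi_j$ using the binomial expansion $(x - m_j(\theta))^{\ell} = \sum_{r=0}^{\ell} \binom{\ell}{r}(-m_j(\theta))^{\ell-r} x^{r}$ together with the auxiliary identity above.

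The main obstacle is the combinatorial matching inside this induction: one must verify that the partition sums produced by iterated application of Faà di Bruno's formula to $\log \circ \Phi_j$ reorganise exactly into the centred-moment expansion of $\mathbb{E}[(\overline{X}_j^{\theta})^{\ell}]$. The cases $\ell = 2, 3$ are essentially immediate, and from there the induction step reduces to tracking how products of lower-order ratios $\Phi_j^{(r)}/\Phi_j$ combine with powers of $m_j(\theta)$ to reconstruct the centred moment.
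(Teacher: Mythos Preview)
The paper gives no proof for this Fact; it is simply stated. So there is nothing to compare your argument against on that front.

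More importantly, your induction will not close for $\ell \geq 4$, and the reason is that the second identity as literally written is \emph{false} beyond $\ell = 3$. The derivatives $\kappa_j^{(\ell)}(\theta)$ are the \emph{cumulants} of $\widetilde{X}_j^{\theta}$, not its central moments $\mu_j^{\ell}(\theta) = \mathbb{E}\bigl[(\overline{X}_j^{\theta})^{\ell}\bigr]$. These coincide for $\ell = 2, 3$ (which is why those cases are, as you say, immediate), but already at $\ell = 4$ one has
\[
\kappa_j^{(4)}(\theta) \;=\; \mu_j^{4}(\theta) - 3\bigl(s_j^{2}(\theta)\bigr)^{2},
\]
not $\mu_j^{4}(\theta)$ itself. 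The paper's own Edgeworth coefficients confirm this reading: in $P_{4,n}$ the quantity $\mu_{1,n}^{4} - 3\sum_{j}(s_j^{2})^{2}$ appears explicitly, which is precisely the sum of fourth cumulants written in terms of fourth central moments. So the ``combinatorial matching'' you flag as the main obstacle is not an obstacle you can overcome --- the Fa\`a di Bruno expansion of $\kappa_j^{(\ell)}$ genuinely produces cumulants, and these do not reorganise into $\mathbb{E}\bigl[(\overline{X}_j^{\theta})^{\ell}\bigr]$ for $\ell \geq 4$.

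Your treatment of the variance identity and of the differentiation-under-the-integral step is fine. For the higher-order part, the honest statement is that $\kappa_j^{(\ell)}(\theta)$ equals the $\ell$-th cumulant of $\widetilde{X}_j^{\theta}$; the identity $\mu_j^{\ell}(\theta) = \kappa_j^{(\ell)}(\theta)$ should be read as holding only for $\ell \in \{2,3\}$, which is in any case all that the later arguments actually require at the level of exact equality.
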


\bigskip

\subsection{Landau Notations} \label{landauNotations}

\begin{defi}
Let $(X_{n})_{n \geq 1}$ be a sequence of r.v.'s such that for any $n \geq 1$, $X_{n}$ is defined on a probability  space $(\Omega_{n},\mathcal{A}_{n}, \mathcal{P}_{n})$. Let $(u_n)$ be a  sequence of real numbers. We say that 

\noindent\\
$(X_{n})_{n \geq 1}$ is a $\mathcal{O}_{\mathcal{P}_{n}}(u_n)$ if for all $\epsilon > 0$,  there exists $A \geq 0$ and $ N_{\epsilon} \in \mathbb{N}$, s.t. for all $n \geqslant N_{\epsilon}$,
\begin{equation}
\mathcal{P}_{n} \left(\left| \frac{X_{n}} {u_{n}} \right|  \leqslant A \right) \geqslant 1 - \epsilon. 
\end{equation}

\noindent
$(X_{n})_{n \geq 1}$ is a $o_{\mathcal{P}_{n}}(u_n)$  if for all $\epsilon > 0$ and $\delta > 0$, there exists 
$N_{\epsilon, \delta} \in \mathbb{N}$ s.t. for all $n \geqslant N_{\epsilon}$,  
\begin{equation}
\mathcal{P}_{n} \left(\left| \frac{X_{n}} {u_{n}} \right|  \leqslant \delta \right) \geqslant 1 - \epsilon.
\end{equation}

\noindent
$(X_{n})_{n \geq 1}$ converges to $\ell \in \mathbb{R}$ in $\mathcal{P}_{n}$- probability and we note 
$X_{n} \enskip {\underset{\mathcal{P}_{n}} {\longrightarrow}} \enskip \ell $ if 
\begin{equation}
X_{n} = \ell + o_{\mathcal{P}_{n}}(1). 
\end{equation}
\end{defi}

\begin{rem}
These notations differ from the classical Landau notations in probability by the fact that here, the rv's $(X_{n})$ are not defined on the same probability space. However, they satisfy similar properties, which we will use implicitly in the proofs. 
\end{rem}

\subsection{A criterion for convergence in Total Variation Distance}

\begin{defi}
Set 
\begin{equation*}
\mathcal{A}_{\rightarrow 1} := 
\left\{ (B_{n})_{n \geq 1} \in \prod\limits_{n \geq 1} \mathcal{A}_{n} \enskip : \enskip
\mathcal{P}_{n}(B_{n}) \enskip {\underset{n \infty} {\longrightarrow}} \enskip 1 \right\}. 
\end{equation*}
\end{defi}

\begin{lem} 
\label{lienISdvt}
For all integer $n \geq 1$, let $Y_{1}^{n} : (\Omega_{n},\mathcal{A}_{n}, \mathcal{P}_{n}) \longrightarrow (\mathbb{R}^n, \mathcal{B}(\mathbb{R}^{n}))$ be a random vector. For any 
$1 \leq k \leq n$, the distribution of $Y_{1}^k$ is denoted by $P_{k}$. Let $G_{k}$ be a probability measure on $\mathbb{R}^{k}$. Assume that $P_{k}$ and $G_{k}$ have positive densities $p_{k}$ and $g_{k}$, and that $k \rightarrow \infty$ as $n \rightarrow \infty$. If there exists $(B_{n})_{n \geq 1} \in \mathcal{A}_{\rightarrow 1}$ s.t. for any $n \geq 1$, we have on $B_{n}$ that  
\begin{equation}
\label{errel}
p_{k}(Y_{1}^k) = g_{k}(Y_{1}^k) \left[1 + T_{n} \right] \enskip \textrm{where} \enskip 
T_{n} = o_{\mathcal{P}_{n}}(1), 
\end{equation}

\noindent
then, 
\begin{equation}
\left\| P_k - G_k \right\|_{TV} \enskip {\underset{n \infty} {\longrightarrow}} \enskip 0. 
\end{equation}

\end{lem}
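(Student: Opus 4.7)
My plan is to start from the density formula for total variation distance, substitute the relation $(\ref{errel})$ on the good event $B_n$, and then control the resulting integrand using the boundedness $(1 - g_k/p_k)_+ \in [0,1]$ together with the $o_{\mathcal{P}_n}(1)$ hypothesis on $T_n$.

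Since $p_k$ and $g_k$ are positive densities, $\int (p_k - g_k)\, dx = 0$, so
$$\|P_k - G_k\|_{TV} \;=\; \tfrac12 \int |p_k - g_k|\, dx \;=\; \int (p_k - g_k)_+\, dx \;=\; \mathbb{E}_{\mathcal{P}_n}\!\left[\left(1 - \frac{g_k(Y_1^k)}{p_k(Y_1^k)}\right)_{\!+}\right],$$
where I used that $Y_1^k$ has distribution $P_k$ under $\mathcal{P}_n$, and that whenever $p_k \le g_k$ the positive part vanishes (so no division by zero on the support of $g_k$). The integrand is a $[0,1]$-valued random variable, which will be crucial for what follows.

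Next I split the expectation along $B_n$ and its complement. The contribution of $B_n^c$ is at most $\mathcal{P}_n(B_n^c)$, and this tends to $0$ because $(B_n)_{n\ge 1} \in \mathcal{A}_{\rightarrow 1}$. On $B_n$, the hypothesis $(\ref{errel})$ gives $g_k(Y_1^k)/p_k(Y_1^k) = 1/(1+T_n)$, hence
$$\left(1 - \frac{g_k(Y_1^k)}{p_k(Y_1^k)}\right)_{\!+}\, \mathbf{1}_{B_n} \;=\; \left(\frac{T_n}{1+T_n}\right)_{\!+}\, \mathbf{1}_{B_n}.$$
Now I fix $\delta \in (0,1/2)$ and decompose $B_n = (B_n \cap \{|T_n| \le \delta\}) \cup (B_n \cap \{|T_n| > \delta\})$. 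On the first piece the integrand is bounded by $\delta/(1-\delta)$ pointwise; on the second piece the integrand is bounded by $1$ and the $\mathcal{P}_n$-measure is at most $\mathcal{P}_n(|T_n| > \delta)$, which tends to $0$ because $T_n = o_{\mathcal{P}_n}(1)$. Combining,
$$\|P_k - G_k\|_{TV} \;\le\; \frac{\delta}{1-\delta} \;+\; \mathcal{P}_n(|T_n| > \delta) \;+\; \mathcal{P}_n(B_n^c).$$
Letting $n \to \infty$ and then $\delta \to 0$ finishes the argument.

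There is no real obstacle here beyond being careful that the random variables live on probability spaces that vary with $n$: the $o_{\mathcal{P}_n}(1)$ hypothesis only gives convergence in probability, not in $L^1$, so one cannot directly invoke dominated convergence across different spaces. The decomposition by $\{|T_n| \le \delta\}$ is precisely the device that replaces dominated convergence by the combination of the uniform a.s.\ bound $(1 - g_k/p_k)_+ \le 1$ and the probability-in-$\mathcal{P}_n$ control of $T_n$.
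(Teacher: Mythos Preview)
Your proof is correct and follows a genuinely different route from the paper's. The paper argues via the supremum definition of total variation: it introduces the ``good set'' $E(n,\delta)=\{|p_k/g_k-1|\le\delta\}\subset\mathbb{R}^k$, shows from the hypothesis that both $P_k(E(n,\delta)^c)$ and $G_k(E(n,\delta)^c)$ are small, and then for an arbitrary Borel set $C$ controls $|P_k(C)-G_k(C)|$ by a triangle inequality through $C\cap E(n,\delta)$, obtaining the explicit bound $3\delta(2+\delta)/(1+\delta)$. You instead invoke the Scheff\'e-type identity $\|P_k-G_k\|_{TV}=\mathbb{E}_{\mathcal{P}_n}\big[(1-g_k(Y_1^k)/p_k(Y_1^k))_+\big]$ and exploit directly that this integrand is $[0,1]$-valued, splitting the expectation in $\Omega_n$ along $B_n$ and $\{|T_n|\le\delta\}$. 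Your approach is shorter and more transparent, and it makes the role of the boundedness of $(1-g_k/p_k)_+$ explicit as a substitute for dominated convergence; the paper's set-based argument is slightly more elementary in that it never writes the TV distance as an integral, working only with measures of sets. Both arrive at essentially the same $\limsup$ bound of order $\delta$.
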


\begin{proof}
For any $\delta >0$, set 
\begin{equation}
E(n,\delta):= \left\{ (y_{1}^{k}) \in \mathbb{R}^{k} : \left| \frac{p_k(y_{1}^{k})}{g_k(y_{1}^{k})} - 1 \right| \leqslant \delta \right\}. 
\end{equation}

\noindent 
Then, 
\begin{align*}
\mathcal{P}_{n} \left( \left\{ \left| T_{n} \right| \leqslant \delta \right\} \cap 
B_{n} \right) &\leqslant 
\mathcal{P}_{n} \left( \left| \frac{p_k(Y_{1}^{k})}{g_k(Y_{1}^{k})} - 1 \right| 
\leqslant \delta \right) \\
&= P_k(E(n,\delta)) \\
&= \int\limits_{E(n,\delta)} \frac{p_k(y_{1}^{k})}{g_k(y_{1}^{k})}g_k(y_{1}^{k})dy_{1}^{k} \\ 
&\leqslant 
(1+\delta)G_k(E(n,\delta)). 
\end{align*}

\noindent
By $(\ref{errel})$, for all $n$ large enough,   
\begin{align*}
\mathcal{P}_{n} \left( \left\{ \left| T_{n} \right| \leqslant \delta \right\} \cap B_{n} 
\right) &\geq 
1 - \mathcal{P}_{n}\left( \left\{ \left| T_{n} \right| > \delta \right\} \right) - 
\mathcal{P}_{n}(B_{n}^{c}) \\
&\geq 1-2\delta.
\end{align*}

\noindent
Combining the preceding inequalities, we obtain that for all $n$ large enough, 
\begin{equation}
1-2\delta \leq P_k(E(n,\delta)) \leq (1 + \delta) G_k(E(n,\delta)). 
\end{equation}

\noindent 
Therefore, 
\begin{equation}
\sup\limits_{C \in \mathcal{B}(\mathbb{R}^k)} | P_k(C) - P_k(C \cap E(n,\delta))| \leqslant P_k(E(n,\delta)^{c}) \leq 2\delta
\end{equation}

\noindent 
and 
\begin{align*}
\sup\limits_{C \in \mathcal{B}(\mathbb{R}^k)} | G_k(C) - G_k(C \cap E(n,\delta)) | &\leq 
1 - G_k(E(n,\delta)) \\
&\leq 1 - \frac{1-2\delta}{1+ \delta} \\
&= \frac{3\delta}{1+ \delta}. 
\end{align*}

\noindent
Now, we have that 
\begin{equation}
\sup\limits_{C \in \mathcal{B}(\mathbb{R}^k)} | P_k(C \cap E(n,\delta)) - G_k(C \cap E(n,\delta)) | \leqslant 
\sup\limits_{C \in \mathcal{B}(\mathbb{R}^k)} \int\limits_{C \cap E(n,\delta)} | p_k(y_{1}^{k}) - g_k(y_{1}^{k}) |  dy_{1}^{k}
\end{equation}

\noindent
From the definition of $E(n,\delta)$, we deduce that 
\begin{align*}
\sup\limits_{C \in \mathcal{B}(\mathbb{R}^k)} | P_k(C \cap E(n,\delta)) - G_k(C \cap E(n,\delta)) | &\leqslant  
\delta \sup\limits_{C \in \mathcal{B}(\mathbb{R}^k)} \int\limits_{C \cap E(n,\delta)} g_k(y_{1}^{k}) dy_{1}^{k} \\
&\leqslant \delta.
\end{align*}

\noindent
Finally, applying the triangle inequality, we have that for all $n$ large enough, 
\begin{align*}
\sup\limits_{C \in \mathcal{B}(\mathbb{R}^k)} | P_k(C) - G_k(C)| &\leq
2\delta + \delta + \frac{3\delta}{1+ \delta} \\
&= 3\delta \left( \frac{2+\delta}{1+\delta}\right), 
\end{align*}

\noindent
which converges to 0 as $\delta \rightarrow 0$. 
\end{proof}

\begin{rem}
A rate of convergence is not obtainable by this method.
\end{rem}

\subsection{A first calculus} \label{firstCalculus}

Set
\begin{equation}
p_{k}\left(Y_{1}^k\right) := p\left( \left. X_{1}^k = Y_{1}^k \right| S_{1,n} = na\right). 
\end{equation}

\noindent\\
First, we have that
\begin{equation}
p\left( \left.X_{1}^k = Y_{1}^k \right| S_{1,n} = na\right) =
p\left( \left. X_k=Y_k \right| X_1^{k-1}=Y_1^{k-1};S_{1,n}=na\right)
p\left( \left. X_1^{k-1}=Y_1^{k-1} \right| S_{1,n}=na\right)
\end{equation}

\noindent
Set $p_{k}\left(Y_1^{k}\right) := p\left( \left. X_{1}^k = Y_{1}^k \right| S_{1,n} = ns\right)$, then we deduce by induction on $k$ that 
\begin{equation}
\label{pkFirstStep}
p_{k}\left(Y_1^k\right) = \left\{ \prod_{i=1}^{k-1} 
p\left( \left. X_{i+1}=Y_{i+1} \right| X_1^i= Y_1^i ; S_{1,n}=na \right) \right\}
p\left( \left.X_{1}=Y_{1} \right| S_{1,n}=na\right).
\end{equation}

\noindent
For $1 \leq i_{1} \leq i_{2} \leq n$, set 
$\Sigma_{i_{1},i_{2}} := \sum\limits_{j=i_{1}}^{i_{2}} Y_{j}$. We deduce from 
$(\ref{pkFirstStep})$ that 
\begin{equation}
p_{k}\left(Y_1^k\right) = \left\{ \prod_{i=1}^{k-1} p\left(\left. X_{i+1}=Y_{i+1} \right| S_{i+1,n}=na-\Sigma_{1,i}\right) \right\} p\left(X_{1}=Y_{1} | S_{1,n}=na\right).
\end{equation}

\noindent
Let $\Sigma_{1,0}=0$. Then, 
\begin{equation}
p_{k}\left(Y_1^k\right) = \prod_{i=0}^{k-1} \pi_i, \enskip \textrm{where} \enskip
\pi_i := p\left(X_{i+1}=Y_{i+1} | S_{i+1,n}=na-\Sigma_{1,i}\right). 
\end{equation}

\noindent
The conditioning event being $\{ S_{i+1,n}=na-\Sigma_{1,i} \}$, we search $\theta$ s.t.
\begin{equation}
E\left[\widetilde{S_{i+1,n}}^{\theta}\right] = 
\sum\limits_{j=i+1}^{n} m_{j}(\theta) = 
na - \Sigma_{1,i}.
\end{equation}

\noindent
Since $\mathcal{P}_{n}$-a.s., $\Sigma_{1,i} + \Sigma_{i+1,n} = na$, this is equivalent to solve the following equation, where $\theta$ is unknown.  
\begin{equation}
\label{equaTilting}
\overline{m}_{i+1,n}(\theta) := \frac{\sum\limits_{j=i+1}^{n} m_{j}(\theta)}{n-i} = 
\frac{\Sigma_{i+1,n}}{n-i}.  
\end{equation}

\noindent
\textit{We will see below (see Definition $\ref{deftin}$) that, under suitable assumptions, equation $(\ref{equaTilting})$ has a unique solution $t_{i,n}$. In the following lines, the tilted densities pertain to $\theta = t_{i,n}$}.

\noindent\\
For $e=1,2$, let $\overline{q}_{i+e,n}$ be the density of $\overline{S}_{i+e,n}$, where 
\begin{equation}
\label{SienBarre}
\overline{S}_{i+e,n} := 
\frac{\widetilde{S_{i+e,n}} - \mathbb{E}\left[ \widetilde{S_{i+e,n}} \right]}
{\sqrt{Var\left(\widetilde{S_{i+e,n}}\right)}} = 
\frac{\widetilde{S_{i+e,n}} - \sum\limits_{j=i+e}^{n} m_{j}(t_{i,n})}
{\sqrt{\sum\limits_{j=i+e}^{n} s_{j}^{2}(t_{i,n})}}. 
\end{equation}

\noindent
Using the invariance of the conditional density under the tilting operation, Fact $\ref{densCond}$ and then renormalizing, we obtain that  
\begin{equation}
\pi_{i} = p\left(\widetilde{X_{i+1}}=Y_{i+1} | \widetilde{S_{i+1,n}}=na-\Sigma_{1,i} \right) = \widetilde{p}_{i+1}(Y_{i+1}) \frac{\sigma_{i+1,n}}{\sigma_{i+2,n}} 
\frac{\overline{q}_{i+2,n}(Z_{i+1})}{\overline{q}_{i+1,n}(0)}, 
\end{equation}

\noindent
where
\begin{equation*}
Z_{i+1} := \frac{m_{i+1} - Y_{i+1}}{\sigma_{i+2,n}}.
\end{equation*}

\subsection{Assumptions}

\begin{defi}
Let $f : (\alpha, \beta) \longrightarrow (A,B)$ be a function, where $\alpha$, $\beta$, $A$ and $B$ may be finite or not. Consider the following condition $(\mathcal{H})$. 

\noindent\\
$(\mathcal{H})$ : $f$ is strictly increasing \enskip and \enskip
$\lim\limits_{\theta \rightarrow \alpha} f(\theta) = A$ \enskip ; \enskip 
$\lim\limits_{\theta \rightarrow \beta} f(\theta) = B$.
\end{defi}

\subsubsection{Statements}

We suppose \textit{throughout the text} that the following assumptions hold. So in the statements of the results, we will not always precise which among them are required.

\noindent\\
$\left( \mathcal{S}upp \right)$ : The $(X_{j})$, $j \geq 1$ have a common support $\mathcal{S}_{X}=(A,B)$, where $A$ and $B$ may be finite or not.

\noindent\\
$\left( \mathcal{M}gf \right)$ : The mgf's $(\Phi_{j})_{j \geq 1}$ have the same domain of finiteness 
$\Theta=(\alpha, \beta)$, where $\alpha$ and $\beta$ may be finite or not.

\noindent\\
$(\mathcal{H}\kappa)$ : For all $j \geq 1$, $m_{j} := \frac{d\kappa_{j}}{d\theta}$ satisfies $(\mathcal{H})$.

\noindent\\
$(\mathcal{U}f)$ : There exist functions $f_{+}$ and $f_{-}$ which satisfy $(\mathcal{H})$ and such that 
\begin{equation}
\forall j \geq 1, \enskip \forall \theta \in \Theta, \enskip f_{-}(\theta) \leq m_j(\theta) \leq f_{+}(\theta).
\end{equation}

\noindent\\
$\left( \mathcal{C}v \right)$ : For  any compact $K \subset \Theta$,
\begin{equation}
0 < \inf\limits_{j \geq 1} \enskip \inf\limits_{\theta \in K} s_{j}^{2}(\theta) \enskip \leq \enskip 
\sup\limits_{j \geq 1} \enskip \sup\limits_{\theta \in K} \enskip  s_{j}^{2}(\theta) < \infty, 
\end{equation}

\noindent\\
$\left( \mathcal{AM}6 \right)$ : For any compact $K \subset \Theta$,
\begin{equation}
\sup\limits_{j \geq 1} \enskip \sup\limits_{\theta \in K} \enskip |\mu|_{j}^{6} (\theta)  <  \infty.
\end{equation}

\noindent\\
$(\mathcal{C}f)$ : For any $j \geq 1$, $p_{j}$ is a function of class $\mathcal{C}^{1}$ and for any compact $K \subset \Theta$, 
\begin{equation}
\sup\limits_{j \geq 1} \enskip 
\sup\limits_{\theta \in K} \enskip  
\left\| \frac{d \widetilde{p}_{j}^{\theta}}{d x} \right\|_{L^{1}} < \infty.  
\end{equation}

\subsubsection{Elementary Facts}

\begin{fact}
If a function $f$ satisfies $(\mathcal{H})$, then $f$ is a homeomorphism from $(\alpha,\beta)$ to $(A,B)$. 
\end{fact}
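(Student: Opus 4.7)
The plan is to verify in order the four properties of a homeomorphism: injectivity of $f$, the inclusion $f((\alpha,\beta))\subseteq (A,B)$, surjectivity onto $(A,B)$, and continuity of $f^{-1}$. Continuity of $f$ itself must be treated as implicit in $(\mathcal{H})$: in every application of this fact in the paper $f$ is $m_j$, which is smooth on the interior of $\Theta$, so continuity is free in practice; without some regularity $(\mathcal{H})$ is not literally enough, since a strictly increasing function can have interior jump discontinuities consistent with the endpoint limit conditions.

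Injectivity is immediate from strict monotonicity: $\theta_1<\theta_2$ forces $f(\theta_1)<f(\theta_2)$. To show $f(\theta)\in (A,B)$ for every $\theta\in (\alpha,\beta)$, I would pick $\theta_-\in (\alpha,\theta)$ and $\theta_+\in (\theta,\beta)$ and chain $f(\theta_-)<f(\theta)<f(\theta_+)$, then let $\theta_-\to\alpha$ and $\theta_+\to\beta$ to obtain $A\leq f(\theta)\leq B$; the strict inequalities follow by comparing $f(\theta)$ to $f$ evaluated at points even closer to the endpoints.

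For surjectivity onto $(A,B)$, given $y\in (A,B)$, the limit $\lim_{\theta\to\alpha}f(\theta)=A<y$ furnishes some $\theta_1\in(\alpha,\beta)$ with $f(\theta_1)<y$, and symmetrically the limit at $\beta$ gives $\theta_2$ with $f(\theta_2)>y$; the intermediate value theorem applied to $f$ on $[\theta_1,\theta_2]$ yields a preimage of $y$. At this point $f$ is a continuous bijection from $(\alpha,\beta)$ onto $(A,B)$, and $f^{-1}$ is automatically strictly increasing. Since $f^{-1}$ has interval domain $(A,B)$ and interval range $(\alpha,\beta)$, the standard fact that a monotone function between intervals with interval range is continuous closes the argument.

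The main (and essentially only) obstacle is the continuity issue flagged above; once continuity of $f$ is in hand, the remainder is a routine appeal to the intermediate value theorem and the standard behaviour of monotone maps on intervals.
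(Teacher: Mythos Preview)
The paper states this as a \emph{Fact} without proof, so there is no argument to compare against; your verification is the standard one and is correct once continuity is granted. Your observation about the gap is well taken: condition $(\mathcal{H})$ as written---strict monotonicity together with the two endpoint limits---does not by itself force continuity on the interior, and without continuity the conclusion fails (take for instance $f:(0,2)\to(0,2)$ with $f(\theta)=\theta/2$ on $(0,1)$ and $f(\theta)=(\theta+2)/2$ on $[1,2)$). As you note, every instance in the paper has $f$ equal to some $m_j=\kappa_j'$ or an average thereof, and these are smooth on the interior of $\Theta$ because $\kappa_j$ is real-analytic there; so the imprecision is in the statement of the Fact rather than in any downstream use.
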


\medbreak

\begin{fact} \label{moyenneHomeo}
If a function $f$ is defined as the mean of functions satisfying $(\mathcal{H})$, then $f$ satisfies $(\mathcal{H})$. In particular, $f$ is a homeomorphism from $(\alpha,\beta)$ to $(A,B)$. 
\end{fact}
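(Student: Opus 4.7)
The plan is to unpack the definition of $(\mathcal{H})$ and verify that each of its two components (strict monotonicity; prescribed limits at the endpoints) is preserved under taking a finite arithmetic mean; then invoke the preceding Fact (which turns $(\mathcal{H})$ into the homeomorphism statement). Concretely, write the hypothesis as $f = \frac{1}{N}\sum_{k=1}^{N} f_k$ with each $f_k : (\alpha,\beta) \to (A,B)$ satisfying $(\mathcal{H})$, which is exactly the situation of the paper where $\overline{m}_{i+1,n}(\theta)=\frac{1}{n-i}\sum_{j=i+1}^{n} m_j(\theta)$ is the mean of the $m_j$'s.

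For strict monotonicity, take $\theta_1 < \theta_2$ in $(\alpha,\beta)$. By strict increase of each $f_k$ one has $f_k(\theta_1) < f_k(\theta_2)$, and summing these $N$ strict inequalities and dividing by $N$ yields $f(\theta_1) < f(\theta_2)$. For the boundary limits, since the sum is finite, the limit commutes with the sum:
\begin{equation*}
\lim_{\theta \to \alpha} f(\theta) \;=\; \frac{1}{N}\sum_{k=1}^{N}\lim_{\theta \to \alpha} f_k(\theta) \;=\; \frac{1}{N}\cdot N\cdot A \;=\; A,
\end{equation*}
and similarly $\lim_{\theta \to \beta} f(\theta) = B$. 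Hence $f$ itself satisfies $(\mathcal{H})$, and the preceding Fact then delivers the homeomorphism from $(\alpha,\beta)$ onto $(A,B)$.

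There is no real obstacle in this argument; the only point that requires a token of care is that the number of summands be finite, so that the limit can be exchanged with the sum without any uniformity assumption. This is harmless in the intended application, where the mean is over the finitely many indices $j=i+1,\dots,n$.
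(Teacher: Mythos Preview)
Your argument is correct and is exactly the elementary verification the paper has in mind; the paper states this as a Fact without proof. The only cosmetic point is that $A$ and $B$ may be infinite in the paper's setup, so the line $\tfrac{1}{N}\cdot N\cdot A=A$ should be read in the extended sense (a finite average of functions each tending to $-\infty$ also tends to $-\infty$), which your finiteness-of-$N$ remark already covers.
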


\medbreak

\begin{corr} \label{homeo}
Let $\ell, n$ be integers with $1 \leq \ell \leq n$. Set
\begin{equation*}
\overline{m}_{\ell,n} := \frac{1}{n-\ell+1} \sum\limits_{j=\ell}^{n} m_{j}. 
\end{equation*}

\noindent
Then, we deduce from $(\mathcal{H}\kappa)$ and Fact $\ref{moyenneHomeo}$ that $\overline{m}_{\ell,n}$ is a homeomorphism from $(\alpha,\beta)$ to $(A,B)$. Consequently, for any $s \in \mathcal{S}_{X}$, the equation 
\begin{equation}
\overline{m}_{\ell,n}(\theta) = s
\end{equation}
 
\noindent
has a unique solution in $\Theta = (\alpha,\beta)$.  
\end{corr}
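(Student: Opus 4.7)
The plan is to directly invoke the two ingredients mentioned in the statement, namely assumption $(\mathcal{H}\kappa)$ and Fact \ref{moyenneHomeo}, after first checking that the hypotheses of the latter are met in the present setting. The statement is structured so as to make this essentially immediate, and I would treat it as a brief verification rather than a substantive proof.

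First, I would unpack $(\mathcal{H}\kappa)$: it states precisely that each $m_j : (\alpha,\beta) \to (A,B)$ satisfies $(\mathcal{H})$, so each $m_j$ is strictly increasing with $\lim_{\theta \to \alpha} m_j(\theta) = A$ and $\lim_{\theta \to \beta} m_j(\theta) = B$. Next, I would observe that, by definition,
\begin{equation*}
\overline{m}_{\ell,n} = \frac{1}{n-\ell+1} \sum_{j=\ell}^{n} m_j
\end{equation*}
is exactly the arithmetic mean of the finitely many functions $m_\ell, \ldots, m_n$, each satisfying $(\mathcal{H})$. Fact \ref{moyenneHomeo} then applies verbatim and yields that $\overline{m}_{\ell,n}$ itself satisfies $(\mathcal{H})$, hence is a homeomorphism from $(\alpha,\beta)$ onto $(A,B)$.

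For the second conclusion, I would simply note that any homeomorphism is a bijection: given $s \in \mathcal{S}_X = (A,B)$, there exists a unique $\theta \in (\alpha,\beta) = \Theta$ with $\overline{m}_{\ell,n}(\theta) = s$. There is no real obstacle here; the only mildly substantive point, which is already packaged inside Fact \ref{moyenneHomeo}, is that a finite arithmetic mean of strictly increasing functions is again strictly increasing (by termwise addition of strict inequalities) and that the boundary limits pass through the finite sum by linearity of the limit. Both of these are elementary, so the corollary is best treated as a bookkeeping statement extracting the consequence of $(\mathcal{H}\kappa)$ that will be used in Definition \ref{deftin} and throughout the tilting arguments.
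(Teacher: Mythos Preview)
Your proposal is correct and matches the paper's approach exactly: the corollary has no separate proof in the paper, as its justification is embedded in the statement itself via the appeal to $(\mathcal{H}\kappa)$ and Fact~\ref{moyenneHomeo}. Your unpacking of these two ingredients is precisely the intended verification.
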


\medbreak

\begin{defi}
We deduce from Corollary $\ref{homeo}$ that for any $a \in \mathcal{S}_{X}$, for any $n \geq 1$, there exists a unique 
$\theta^{a}_{n} \in \Theta$ s.t. 
\begin{equation*}
\overline{m}_{1,n}(\theta^{a}_{n}) = a. 
\end{equation*}
\end{defi}

\medbreak

\begin{fact}
We deduce from $(\mathcal{H}\kappa)$ that for any $a \in \mathcal{S}_{X}$, there exists a compact set $K_{a}$ of $\mathbb{R}$ s.t. 
\begin{equation}
\left\{  \theta^{a}_{n} : n \geq 1 \right\} \subset K_{a} \subset \Theta. 
\end{equation}
\end{fact}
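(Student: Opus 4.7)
My strategy is to trap $\theta_n^a$ between two $n$-independent points of $\Theta$ by leveraging the uniform envelopes of assumption $(\mathcal{U}f)$, and then invoke the strict monotonicity from $(\mathcal{H}\kappa)$ to invert the sandwich.

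First, I would promote the pointwise bound $f_{-}(\theta)\leq m_{j}(\theta)\leq f_{+}(\theta)$, which holds for every $j\geq 1$ and every $\theta\in\Theta$ by $(\mathcal{U}f)$, to an average bound: dividing by $n$ and summing over $j=1,\ldots,n$ yields
\[
f_{-}(\theta)\;\leq\;\overline{m}_{1,n}(\theta)\;\leq\;f_{+}(\theta)\qquad\text{for every }\theta\in\Theta.
\]
Specializing this to $\theta=\theta_n^a$ and using the defining identity $\overline{m}_{1,n}(\theta_n^a)=a$ immediately gives $f_{-}(\theta_n^a)\leq a\leq f_{+}(\theta_n^a)$.

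Second, I would invert. Since $f_{\pm}$ satisfy $(\mathcal{H})$, the first elementary Fact of the section asserts that they are strictly increasing homeomorphisms from $\Theta=(\alpha,\beta)$ onto $(A,B)=\mathcal{S}_X$. In particular, for any $a\in\mathcal{S}_X$ the preimages $f_{+}^{-1}(a)$ and $f_{-}^{-1}(a)$ are well-defined points of $\Theta$, and applying these order-preserving inverses to the sandwich yields
\[
f_{+}^{-1}(a)\;\leq\;\theta_n^a\;\leq\;f_{-}^{-1}(a),
\]
uniformly in $n\geq 1$. It then suffices to set $K_a:=[f_{+}^{-1}(a),\,f_{-}^{-1}(a)]$, which is a closed bounded interval whose endpoints lie strictly inside the open interval $\Theta=(\alpha,\beta)$, and is therefore a compact subset of $\Theta$ containing $\theta_n^a$ for every $n$.

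I foresee no substantive obstacle here; the argument is purely order-theoretic. The one subtlety worth flagging is that, although the statement credits only $(\mathcal{H}\kappa)$, the uniformity in $j$ provided by $(\mathcal{U}f)$ is what actually does the work: without a $j$-independent envelope one cannot control $\overline{m}_{1,n}$ by functions that do not depend on $n$, and the bound on $\theta_n^a$ could in principle escape every compact subset of $\Theta$ as $n\to\infty$.
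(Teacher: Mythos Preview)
Your argument is correct, and it is precisely the approach the paper itself employs later in the proof of Theorem~\ref{maxTi}, where the inequality $(f_{+})^{-1}(s)\leq(\overline{m}_{i+1,n})^{-1}(s)\leq(f_{-})^{-1}(s)$ is used to bound the $t_{i,n}$; the Fact in question is stated without proof, so your write-up fills the gap using exactly the intended mechanism. Your remark that $(\mathcal{U}f)$, not merely $(\mathcal{H}\kappa)$, is what delivers the $n$-independent compact is also well taken and consistent with the paper's standing convention that all the assumptions of Section~2.6 are in force throughout.
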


\medbreak

\begin{corr} \label{mjThetan}
We deduce from the preceding Fact and the Assumptions that, for any $a \in \mathcal{S}_{X}$,  
\begin{equation}
\sup\limits_{n \geq 1} \enskip \sup\limits_{j \geq 1} \enskip |m_{j}(\theta^{a}_{n})| < \infty, 
\end{equation}

\begin{equation}
0 < \inf\limits_{n \geq 1} \enskip \inf\limits_{j \geq 1} \enskip \Phi_{j}(\theta^{a}_{n}) \leq \sup\limits_{n \geq 1} \enskip \sup\limits_{j \geq 1} \enskip \Phi_{j}(\theta^{a}_{n}) < \infty, 
\end{equation}

\begin{equation}
0 < \inf\limits_{n \geq 1} \enskip \inf\limits_{j \geq 1} \enskip s_{j}^{2}(\theta^{a}_{n}) \leq \sup\limits_{n \geq 1} \enskip \sup\limits_{j \geq 1} \enskip s_{j}^{2}(\theta^{a}_{n}) < \infty, 
\end{equation}

\noindent
and for any $3 \leq \ell \leq 6$, 
\begin{equation}
\sup\limits_{n \geq 1} \enskip \sup\limits_{j \geq 1} \enskip |\mu_{j}^{\ell} (\theta^{a}_{n})| \leq \sup\limits_{n \geq 1} \enskip \sup\limits_{j \geq 1} \enskip |\mu|_{j}^{\ell} (\theta^{a}_{n}) < \infty. 
\end{equation}
\end{corr}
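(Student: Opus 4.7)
The plan is to leverage the preceding \textbf{Fact}, which provides a compact set $K_{a} \subset \Theta$ containing every $\theta_{n}^{a}$, and then translate each of the four uniform bounds into a statement about continuous functions on a compact subset of $\Theta$.

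For the bound on $|m_{j}(\theta_{n}^{a})|$, I would invoke $(\mathcal{U}f)$ together with the fact that $f_{+}$ and $f_{-}$, being homeomorphisms (hence continuous) on $\Theta$, are bounded on $K_{a}$, so that $\sup_{\theta \in K_{a}} \max(|f_{+}(\theta)|,|f_{-}(\theta)|)$ controls $|m_{j}(\theta_{n}^{a})|$ uniformly in $j$ and $n$. The bound on $s_{j}^{2}(\theta_{n}^{a})$ is an immediate specialization of $(\mathcal{C}v)$ to $K=K_{a}$, giving both positivity of the infimum and finiteness of the supremum. For the moments $|\mu|_{j}^{\ell}(\theta_{n}^{a})$ with $\ell=6$, this is directly $(\mathcal{AM}6)$ restricted to $K_{a}$; for $3 \leq \ell \leq 5$, I would apply Jensen's inequality in the form $|\mu|_{j}^{\ell}(\theta) = \mathbb{E}[|\overline{X}_{j}^{\theta}|^{\ell}] \leq \bigl(\mathbb{E}[|\overline{X}_{j}^{\theta}|^{6}]\bigr)^{\ell/6} = \bigl(|\mu|_{j}^{6}(\theta)\bigr)^{\ell/6}$, which reduces the lower-order moment bounds to the $\ell=6$ case, and the inequality $|\mu_{j}^{\ell}| \leq |\mu|_{j}^{\ell}$ handles the signed moment.

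The bound on $\Phi_{j}(\theta_{n}^{a})$ is the one step that is not an immediate quotation of an assumption, and it is the main technical point. I would argue as follows. Since $\Phi_{j}(0) = 1$, we have $0 \in \Theta$, so $\kappa_{j}(0) = 0$ and the fundamental theorem of calculus gives $\kappa_{j}(\theta) = \int_{0}^{\theta} m_{j}(u)\, du$ for every $\theta \in \Theta$. Enlarge $K_{a}$ to the compact interval $K_{a}' := [\min(K_{a} \cup \{0\}),\, \max(K_{a} \cup \{0\})] \subset \Theta$; by $(\mathcal{U}f)$ and the continuity of $f_{\pm}$ on $K_{a}'$, one gets a finite constant $M := \sup_{u \in K_{a}'} \max(|f_{+}(u)|,|f_{-}(u)|)$ such that $|m_{j}(u)| \leq M$ for all $j \geq 1$ and $u \in K_{a}'$. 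Consequently $|\kappa_{j}(\theta_{n}^{a})| \leq M \cdot |\theta_{n}^{a}| \leq M \cdot \sup_{\theta \in K_{a}} |\theta| =: C_{a}$, so $\Phi_{j}(\theta_{n}^{a}) = \exp \kappa_{j}(\theta_{n}^{a}) \in [e^{-C_{a}}, e^{C_{a}}]$ uniformly in $j$ and $n$, which yields both the positive infimum and the finite supremum.

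The only subtlety is ensuring that the interval of integration from $0$ to $\theta_{n}^{a}$ stays inside $\Theta$; this is guaranteed by the fact that $\Theta = (\alpha,\beta)$ is an interval containing both $0$ and $K_{a}$, and by the convexity of intervals. All remaining steps are routine appeals to the stated assumptions and to continuity on a compact subset of $\Theta$.
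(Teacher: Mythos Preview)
Your proposal is correct and follows exactly the approach the paper indicates: the paper itself gives no proof beyond the phrase ``We deduce from the preceding Fact and the Assumptions,'' so you are simply making explicit the intended argument of restricting to the compact $K_{a}$ and invoking $(\mathcal{U}f)$, $(\mathcal{C}v)$, and $(\mathcal{AM}6)$ in turn. Your treatment of the $\Phi_{j}$ bound via $\kappa_{j}(\theta)=\int_{0}^{\theta} m_{j}(u)\,du$ and the uniform control of $m_{j}$ on a compact interval is the natural way to extract this from the stated assumptions (none of which bounds $\Phi_{j}$ directly), and the Jensen step for $3\le\ell\le 5$ is the standard reduction; both are exactly what the paper must have in mind.
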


\medbreak

\begin{defi} \label{deftin}
We deduce from Corollary $\ref{homeo}$ that for any $n \geq 1$ and $0 \leq i \leq k-1$, there exists a unique $t_{i,n} \in \Theta$ s.t. 
\begin{equation}
\overline{m}_{i+1,n}(t_{i,n}) = \frac{\sum\limits_{j=i+1}^{n} Y_{j}}{n-i}.  
\end{equation}

\noindent
Since $\overline{m}_{i+1,n}$ is a homeomorphism from $\mathcal{S}_{X}$ to $\Theta$, $t_{i,n}$ is a r.v. defined on $(\Omega_{n}, \mathcal{A}_{n})$. 
\end{defi}

\medbreak

\begin{fact} \label{ppteMomentsTin}
Assume that 
\begin{equation}
\label{thetaIEqua}
\max\limits_{0 \leq i \leq k-1} |t_{i,n}| = \mathcal{O}_{\mathcal{P}_{n}}(1)
\end{equation}

\noindent
Then, under the Assumptions, we have that 
\begin{equation}
\max\limits_{0 \leq i \leq k-1} \enskip \sup\limits_{j \geq 1} \enskip |m_{j}(t_{i,n})| =
\mathcal{O}_{\mathcal{P}_{n}}(1), 
\end{equation}

\begin{equation}
\max\limits_{0 \leq i \leq k-1} \enskip \sup\limits_{j \geq 1} \enskip 
\max \left\{ \frac{1}{\Phi_{j}(t_{i,n})} ; \Phi_{j}(t_{i,n}) \right\} =
\mathcal{O}_{\mathcal{P}_{n}}(1),
\end{equation}

\begin{equation}
\label{unExemple}
\max\limits_{0 \leq i \leq k-1} \enskip \sup\limits_{j \geq 1} \enskip 
\max \left\{ \frac{1}{s_{j}^{2}(t_{i,n})} ; s_{j}^{2}(t_{i,n}) \right\} =
\mathcal{O}_{\mathcal{P}_{n}}(1),
\end{equation}

\noindent
and for any $3 \leq \ell \leq 6$, 
\begin{equation}
\max\limits_{0 \leq i \leq k-1} \enskip \sup\limits_{j \geq 1} \enskip |\mu_{j}^{\ell} (t_{i,n})| \leq 
\max\limits_{0 \leq i \leq k-1} \enskip \sup\limits_{j \geq 1} \enskip |\mu|_{j}^{\ell} (t_{i,n})  =  \mathcal{O}_{\mathcal{P}_{n}}(1). 
\end{equation}

\begin{proof}
We prove only $(\ref{unExemple})$, the other proofs being similar. Let $\epsilon > 0$. Then, $(\ref{thetaIEqua})$ implies that there exists $A_{\epsilon} > 0$ s.t. for all $n$ large enough, 
\begin{equation}
\mathcal{P}_{n} \left( \max\limits_{0 \leq i \leq k-1} |t_{i,n}| \leq A_{\epsilon} 
\right) \geq 1 - \epsilon. 
\end{equation}

\noindent
Now, $\left( \mathcal{C}v \right)$ implies that 
\begin{equation}
s_{A_{\epsilon}}^{2} := \sup\limits_{j \geq 1} \enskip \sup\limits_{\theta \in [ - A_{\epsilon} ; A_{\epsilon}]} \enskip s_{j}^{2} (\theta) < \infty. 
\end{equation}

\noindent
Therefore, 
\begin{equation}
\mathcal{P}_{n} \left( \max\limits_{0 \leq i \leq k-1} \enskip \sup\limits_{j \geq 1} \enskip
s_{j}^{2}(t_{i,n}) \leq s_{A_{\epsilon}}^{2} \right) \geq 1 - \epsilon. 
\end{equation}
\end{proof} 

\end{fact}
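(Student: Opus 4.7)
The plan is to reduce each statement to a uniform bound on a compact subset of $\Theta$, then transfer that deterministic bound to the stochastic $\mathcal{O}_{\mathcal{P}_{n}}(1)$ statement by using the hypothesis $(\ref{thetaIEqua})$. The template is exactly the one the excerpt uses for $(\ref{unExemple})$: fix $\epsilon>0$, choose $A_{\epsilon}$ so that $\mathcal{P}_{n}(\max_{i}|t_{i,n}|\leq A_{\epsilon})\geq 1-\epsilon$ for $n$ large, take the compact set $K_{\epsilon}:=[-A_{\epsilon},A_{\epsilon}]\cap\Theta$, and on the corresponding high-probability event replace $t_{i,n}$ by a worst case over $K_{\epsilon}$.

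\medbreak

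For the bound on $|m_{j}(t_{i,n})|$, I would invoke $(\mathcal{U}f)$: on $K_{\epsilon}$, $f_{-}$ and $f_{+}$ are continuous (being homeomorphisms) and so bounded, hence $\sup_{j}\sup_{\theta\in K_{\epsilon}}|m_{j}(\theta)|\leq \max\{|f_{-}(-A_{\epsilon})|,|f_{+}(A_{\epsilon})|\}<\infty$. For $\Phi_{j}(t_{i,n})$ and its reciprocal, I would use that $0\in\Theta$ (since $\Phi_{j}(0)=1$) and write $\kappa_{j}(\theta)=\int_{0}^{\theta}m_{j}(u)\,du$; the integrand is uniformly bounded on $K_{\epsilon}$ by the previous step, so $|\kappa_{j}(\theta)|\leq A_{\epsilon}\cdot C_{\epsilon}$ for $\theta\in K_{\epsilon}$, giving both an upper bound and a strictly positive lower bound on $\Phi_{j}=\exp\kappa_{j}$. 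The $s_{j}^{2}$ two-sided bound is verbatim the author's model proof, using $(\mathcal{C}v)$ for the upper bound and the strict positive infimum in $(\mathcal{C}v)$ for the lower bound on $1/s_{j}^{2}$.

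\medbreak

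For the centered moments $|\mu|_{j}^{\ell}(t_{i,n})$ with $3\leq\ell\leq 6$, Lyapunov's inequality applied to $|\overline{X}_{j}^{\theta}|$ yields $|\mu|_{j}^{\ell}(\theta)\leq \bigl(|\mu|_{j}^{6}(\theta)\bigr)^{\ell/6}$, so the uniform bound $(\mathcal{AM}6)$ on compacts $K_{\epsilon}$ takes care of all $\ell\in\{3,4,5,6\}$ simultaneously. The bound on $|\mu_{j}^{\ell}|$ follows trivially from $|\mu_{j}^{\ell}|\leq|\mu|_{j}^{\ell}$.

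\medbreak

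Combining the four cases, on the event $\{\max_{i}|t_{i,n}|\leq A_{\epsilon}\}$ each of the random suprema is bounded by a deterministic constant depending only on $\epsilon$, which is exactly the definition of $\mathcal{O}_{\mathcal{P}_{n}}(1)$. I do not anticipate a serious obstacle: the only mildly non-routine point is the lower bound on $\Phi_{j}$, which is not directly listed among the Assumptions but is extracted from $(\mathcal{U}f)$ by integration of $m_{j}$ from $0$, together with the (automatic) identity $\kappa_{j}(0)=0$. Everything else is a direct application of the uniform-over-$j$-and-compacts hypotheses.
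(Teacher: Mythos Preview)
Your proposal is correct and follows exactly the template the paper uses for $(\ref{unExemple})$: fix $\epsilon$, confine all $t_{i,n}$ to a compact $K_{\epsilon}$ with high probability, then invoke the appropriate uniform-over-compacts assumption. The only place you add anything beyond the paper's ``the other proofs being similar'' is the $\Phi_{j}$ bound via $\kappa_{j}(\theta)=\int_{0}^{\theta}m_{j}$, which is a legitimate way to extract that bound from $(\mathcal{U}f)$ since no assumption controls $\Phi_{j}$ directly.
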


\begin{rem}
We will prove in Section 3.4. that, under the Assumptions, $(\ref{thetaIEqua})$ holds.  
\end{rem}

\section{Properties of $(Y_{1}^{n})_{n \geq 1}$}

\subsection{Edgeworth expansion} \label{Petrov}

Let  $(X_{j})_{j \geq 1}$ be a sequence of independent r.v.'s with zero means and finite variances. For any 
$j \geq 1$ and $\ell \geq 3$, set 
\begin{equation*}
s_{j}^{2} := \mathbb{E}[X_{j}^{2}] = Var(X_{j}) \quad ; \quad
\sigma_{j} := \sqrt{s_{j}^{2}} \quad ; \quad
\mu_{j}^{\ell} := \mathbb{E}[X_{j}^{\ell}] \quad ; \quad
|\mu|_{j}^{\ell} := \mathbb{E} \left[ \left|X_{j} \right|^{\ell} \right].
\end{equation*}

\noindent
For any $p,q$ with $1 \leq p \leq q$ and $\ell >2$, set 
\begin{equation*}
s_{p,q}^{2} := \sum\limits_{j=p}^{q} s_{j}^{2} \quad ; \quad
\sigma_{p,q} := \sqrt{s_{p,q}^{2}} \quad ; \quad
\mu_{p,q}^{\ell} := \sum\limits_{j=p}^{q} \mu_{j}^{\ell}.
\end{equation*}

\noindent
For any $j \geq 1$, if $p_{j}$ is of class $\mathcal{C}^{1}$, set
\begin{equation*}
d_{j} := \left\| \frac{dp_{j}}{dx} \right\|_{L^{1}}. 
\end{equation*}

\noindent
For $\nu \geq 3$, let $H_{\nu}$ be the Hermite polynomial of degree $\nu$. For example, 
\begin{equation*}
H_{3}(x)=x^{3} - 3x \quad ; \quad H_{4}(x)=x^{4} - 6x^{2} + 3 \quad ; \quad
H_{5}(x)=x^{5} - 10x^{3} + 15x.
\end{equation*}

\begin{theo} \label{classicEdge}
Let $m$ be an integer with $m  \geq 3$. Assume that
\begin{equation}
\label{covClassique}
\sup\limits_{j \geq 1} \enskip \frac{1}{s_{j}^{2}} < \infty, 
\end{equation}

\begin{equation}
\label{AMClassique}
\sup\limits_{j \geq 1} \enskip |\mu|_{j}^{m+1} < \infty, 
\end{equation}

\begin{equation} 
\label{condCF2}
\sup\limits_{j \geq 1} \enskip d_{j} < \infty.
\end{equation}

\noindent\\
Let $\mathfrak{n}$ be the density of the standard normal distribution. For any $n \geq 1$, let $q_{n}$ be the density of $(s_{1,n}^{2})^{-1/2} S_{1,n}$. Then, for all $n$ large enough, we have that  
\begin{equation}
\label{concluPetrov}
\sup\limits_{x \in \mathbb{R}} \enskip 
\left| q_{n}(x) - \mathfrak{n}(x) \left( 1 + \sum\limits_{\nu=3}^{m} P_{\nu,n}(x) \right) \right| = \frac{o(1)}{n^{(m-2)/2}}, 
\end{equation}

\noindent
where, for example, 
\begin{equation*}
P_{3,n}(x) = \frac{\mu_{1,n}^{3}}{6(s_{1,n}^{2})^{3/2}}H_{3}(x)
\end{equation*}

\begin{equation*}
P_{4,n}(x) = \frac{(\mu_{1,n}^{3})^2}{72(s_{1,n}^{2})^{3}} H_{6}(x) + 
\frac{ \mu_{1,n}^{4} - 3\sum\limits_{j=1}^{n} (s_{j}^{2})^2}{24(s_{1,n}^{2})^{2}} H_{4}(x)
\end{equation*}

\begin{equation*}
P_{5,n}(x) = \frac{(\mu_{1,n}^{3})^3}{1296(s_{1,n}^{2})^{9/2}} H_{9}(x) +
\frac{\mu_{1,n}^{3} \left( \mu_{1,n}^{4} - 3\sum\limits_{j=1}^{n} (s_{j}^{2})^2 \right)}
{144(s_{1,n}^{2})^{7/2}} H_{7}(x)
+ \frac{ \mu_{1,n}^{5} - 10\sum\limits_{j=1}^{n} \mu_{j}^{3} s_{j}^{2}}{120(s_{1,n}^{2})^{5/2}} H_{5}(x)
\end{equation*}
\end{theo}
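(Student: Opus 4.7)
}

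The plan is to follow the classical Fourier-inversion route for Edgeworth expansions as in Petrov, adapted to the non-identically distributed setting. Let $\varphi_j$ be the characteristic function of $X_j$ and set $\psi_n(t) := \prod_{j=1}^{n}\varphi_j(t/\sigma_{1,n})$, which is the characteristic function of $S_{1,n}/\sigma_{1,n}$. Because each $p_j$ is in $\mathcal{C}^1$ with $\|p_j'\|_{L^1} = d_j < \infty$, an integration by parts gives $|\varphi_j(t)| \le d_j/|t|$, which combined with $|\varphi_j|\le 1$ will allow us to invert Fourier. Denote by $\widehat{E}_n$ the Fourier transform of $\mathfrak{n}(x)\bigl(1+\sum_{\nu=3}^{m} P_{\nu,n}(x)\bigr)$; then by Fourier inversion
\begin{equation*}
q_n(x) - \mathfrak{n}(x)\Bigl(1+\sum_{\nu=3}^{m} P_{\nu,n}(x)\Bigr)
\;=\; \frac{1}{2\pi}\int_{\R} e^{-itx}\bigl[\psi_n(t) - \widehat{E}_n(t)\bigr]\,dt,
\end{equation*}
so everything reduces to controlling this integral uniformly in $x$.

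The second step is to split the integration domain into a central region $|t|\le T\sqrt{n}$ (for $T$ small enough, to be chosen) and a tail region $|t|>T\sqrt{n}$. On the central region I would Taylor-expand $\log\varphi_j(u)$ at $u=0$ up to order $m+1$, using that under (\ref{covClassique}) and (\ref{AMClassique}) the variances are bounded below and the absolute moments $|\mu|_j^{m+1}$ are bounded above uniformly in $j$. Summing over $j$, one obtains $\log\psi_n(t) = -t^2/2 + \sum_{\nu=3}^{m+1} \chi_{\nu,n}(it)^\nu/\nu! + R_n(t)$ where $\chi_{\nu,n}$ is the normalized cumulant of order $\nu$ of $S_{1,n}/\sigma_{1,n}$ and $R_n$ has the usual remainder estimate. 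Exponentiating and regrouping by powers of $1/\sqrt{n}$ (using that the lower bound on $s_j^2$ gives $s_{1,n}^2 \asymp n$, and that the higher cumulants sums divided by $s_{1,n}^\nu$ are $O(n^{-(\nu-2)/2})$) yields exactly the polynomials $P_{\nu,n}$, so that in the central region $|\psi_n(t)-\widehat E_n(t)|$ is bounded by $n^{-(m-2)/2}\cdot \varepsilon_n(t)\cdot e^{-t^2/4}$ with $\varepsilon_n\to 0$ uniformly on compacts. Integrating gives the desired $o(n^{-(m-2)/2})$ contribution from the central part.

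The third step is the tail estimate. Here I would exploit (\ref{condCF2}): pick any index $j$; by the integration-by-parts bound we have $|\varphi_j(t/\sigma_{1,n})| \le d_j \sigma_{1,n}/|t|$, whereas in the complementary range of moderate $|t|$ we use the standard quadratic bound $|\varphi_j(u)|\le 1 - c u^2 + O(u^3)$ coming from (\ref{covClassique}) and (\ref{AMClassique}) to show that $\sup_{|t|\ge \delta \sigma_{1,n}} \prod_j|\varphi_j(t/\sigma_{1,n})|$ decays faster than any polynomial in $n$ on an intermediate zone, while the very far tail $|t|\ge T\sqrt n$ is handled by the pointwise $1/|t|$ decay plus $|\varphi_j|\le 1$ elsewhere. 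The same kind of Gaussian decay takes care of the Edgeworth transform $\widehat E_n(t)$ in the tail. Both contributions are then $o(n^{-(m-2)/2})$ uniformly in $x$.

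The delicate point, and the one I expect to be the main obstacle, is the tail estimate. In the i.i.d. case one has $|\psi_n(t)| = |\varphi_1(t/\sigma_{1,n})|^n$ and a single uniform bound $\sup_{|u|\ge \delta}|\varphi_1(u)|<1$ suffices; in the non-i.i.d. case there is no such uniform gap, so one must combine (\ref{covClassique})--(\ref{AMClassique}) to force a quadratic decrement for every $j$ on a moderate range of $t$, and then splice this estimate with the $d_j/|t|$ bound to cover the very far tail. Once this uniform tail control is secured, combining it with the central expansion through the triangle inequality yields (\ref{concluPetrov}).
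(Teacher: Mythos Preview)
The paper does not give its own proof of this statement: Theorem~\ref{classicEdge} is quoted from \cite{Petrov 1975}, and the argument is only reproduced later when the paper proves its Second Extension (the $t_{i,n}$-tilted version), explicitly ``follow[ing] the lines of the proof of Theorem~\ref{classicEdge}, given in \cite{Petrov 1975}''. Your outline is precisely Petrov's route---Fourier inversion, Taylor expansion of $\log\psi_n$ on a central region, and a tail estimate---so in substance you are doing exactly what the paper (via Petrov) does.

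One place where your sketch is genuinely loose is the far tail. Saying that ``the very far tail $|t|\ge T\sqrt n$ is handled by the pointwise $1/|t|$ decay plus $|\varphi_j|\le 1$ elsewhere'' is not sufficient: a single factor $d_j\sigma_{1,n}/|t|$ together with the trivial bound $1$ on the remaining factors yields a function that is neither integrable in $t$ nor small in $n$. What actually makes condition~(\ref{condCF2}) work is that, with $D:=\sup_j d_j$, \emph{every} factor obeys $|\varphi_j(u)|\le D/|u|$; hence for $|u|\ge 2D$ one has $|\varphi_j(u)|\le 1/2$ for all $j$ simultaneously, so the product is $\le 2^{-(n-2)}(D/|u|)^2$, which is both integrable in $u$ and exponentially small in $n$. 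On the remaining window between the central threshold and $|u|=2D$ you still need a uniform-in-$j$ gap $\sup_j\sup_{\eta\le|u|\le 2D}|\varphi_j(u)|<1$; this is exactly where the quadratic decrement and the smoothness condition must be spliced, and it deserves to be written out explicitly (in the paper's Second Extension proof this step is the invocation of ``Lemma 12 in \cite{Petrov 1975}''). Once that intermediate window is closed, your decomposition goes through.
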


\bigskip

\begin{rem}
\label{remPnu}
We obtain from $(\ref{covClassique})$ and $(\ref{AMClassique})$ that 
\begin{equation}
P_{3,n}(x) = \mathcal{O} \left( \frac{1}{n^{1/2}} \right)H_{3}(x)
\end{equation}

\begin{equation}
P_{4,n}(x) = \mathcal{O} \left( \frac{1}{n} \right)H_{6}(x) + \mathcal{O} \left( \frac{1}{n} \right)H_{4}(x) 
\end{equation}

\begin{equation}
P_{5,n}(x) = \mathcal{O} \left( \frac{1}{n^{3/2}} \right)H_{9}(x) + \mathcal{O} \left( \frac{1}{n^{3/2}} \right)H_{7}(x) + \mathcal{O} \left( \frac{1}{n^{3/2}} \right)H_{5}(x)
\end{equation}
\end{rem}

\subsection{Extensions of the Edgeworth expansion}

For any integers $p,q$ with $1 \leq p \leq q$ and $\theta \in \Theta$, set 
\begin{equation*}
s_{p,q}^{2}(\theta) := \sum\limits_{j=p}^{q} s_{j}^{2}(\theta) \quad ; \quad
\sigma_{p,q}(\theta) := \sqrt{s_{p,q}^{2}(\theta)} \quad ; \quad
\mu_{p,q}^{\ell}(\theta) := \sum\limits_{j=p}^{p} \mu_{j}^{\ell}(\theta).
\end{equation*}

\noindent
For any $j \geq 1$ and $\theta \in \Theta$, set  
\begin{equation*}
d_{j}(\theta) := \left\| \frac{d \widetilde{p}_{j}^{\theta}}{d x} \right\|_{L^{1}}. 
\end{equation*}

\subsubsection{First Extension}

For any $n \geq 1$, let $J_{n}$ be a subset of $\left\{ 1, ..., n \right\}$ s.t. 
$\alpha_{n} := \left| J_{n} \right| < n$. Let $L_{n}$ be the complement of $J_{n}$ in 
$\left\{ 1, ..., n \right\}$. Set 
\begin{equation*}
\overline{S}_{L_{n}} := \sum\limits_{j \in L_{n}} 
\widetilde{X}_{j}^{\theta_{n}^{a}} - \mathbb{E}\left[ \widetilde{X}_{j}^{\theta_{n}^{a}} \right]
= \widetilde{X}_{j}^{\theta_{n}^{a}} - m_{j}(\theta_{n}^{a}). 
\end{equation*}

\noindent
For any $\theta \in \Theta$ and $\ell \geq 3$, set 
\begin{equation*}
s_{L_{n}}^{2}(\theta) := \sum\limits_{j \in L_{n}} s_{j}^{2}(\theta) \quad ; \quad
\sigma_{L_{n}}(\theta) := \sqrt{s_{L_{n}}^{2}(\theta)} \quad ; \quad
\mu_{L_{n}}^{\ell}(\theta) := \sum\limits_{j \in L_{n}} \mu_{j}^{\ell}(\theta). 
\end{equation*}

\begin{theo} 
\label{EdgeTT}
Let $m$ be an integer with $m  \geq 3$. Assume that 
\begin{equation}
\label{covThetan}
\sup\limits_{j \geq 1} \enskip 
\frac{1}{s_{j}^{2}(\theta_{n}^{a})} = \mathcal{O}(1), 
\end{equation}

\begin{equation}
\label{AM6Thetan}
\sup\limits_{j \geq 1} \enskip |\mu|_{j}^{m+1}(\theta_{n}^{a}) = \mathcal{O}(1), 
\end{equation}

\begin{equation}
\label{cf2Thetan}
\sup\limits_{j \geq 1} \enskip d_{j}(\theta_{n}^{a}) = \mathcal{O}(1). 
\end{equation}

\noindent\\
For any $n \geq 1$, let $\overline{q}_{L_{n}}$ be the density of $(s_{L_{n}}^{2})^{-1/2} \overline{S}_{L_{n}}$. Then, for all $n$ large enough, we have that
\begin{equation}
\label{concluTTEdge}
\sup\limits_{x \in \mathbb{R}} \enskip 
\left| \overline{q}_{L_{n}}(x) - \mathfrak{n}(x) \left(1 + \sum\limits_{\nu=3}^{m} \overline{P}_{\nu,L_{n}}(x) \right)\right| = 
\frac{o\left( 1 \right)}{(n-\alpha_{n})^{(m-2)/2}}, 
\end{equation}

\noindent
where the $\overline{P}_{\nu,L_{n}}$ are defined as the $P_{\nu,n}$, except that the $s_{1,n}^{2}$ and the $\mu_{1,n}^{\ell}$ are replaced respectively by $s_{L_{n}}^{2}(\theta_{n}^{a})$ and $\mu_{L_{n}}^{\ell}(\theta_{n}^{a})$.
\end{theo}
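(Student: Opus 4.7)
The plan is to deduce (\ref{concluTTEdge}) by reducing to the classical expansion of Theorem \ref{classicEdge}, applied to the sequence of independent centered tilted variables $(\overline{X}_j^{\theta_n^a})_{j \in L_n}$ living on $(\Omega^{\theta_n^a}, \mathcal{A}^{\theta_n^a}, \mathcal{P}^{\theta_n^a})$. These have zero mean, variances $s_j^2(\theta_n^a)$, and centered moments $\mu_j^\ell(\theta_n^a)$, and the normalized sum $(s_{L_n}^2(\theta_n^a))^{-1/2} \overline{S}_{L_n}$ is exactly the object to which the classical expansion would apply. Moreover, the Edgeworth polynomials $\overline{P}_{\nu,L_n}$ are built from $\mu_{L_n}^\ell(\theta_n^a)$ and $s_{L_n}^2(\theta_n^a)$ in the same way that $P_{\nu,n}$ is built from $\mu_{1,n}^\ell$ and $s_{1,n}^2$, so the two formal expansions have identical shape.

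The first step is to verify that the three hypotheses of Theorem \ref{classicEdge} are supplied by our assumptions. Condition (\ref{covClassique}) is exactly (\ref{covThetan}); condition (\ref{AMClassique}) is exactly (\ref{AM6Thetan}), since by definition $|\mu|_j^{m+1}(\theta_n^a) = \mathbb{E}[|\overline{X}_j^{\theta_n^a}|^{m+1}]$; and condition (\ref{condCF2}) is exactly (\ref{cf2Thetan}), since $\overline{X}_j^{\theta_n^a}$ is a translate of $\widetilde{X}_j^{\theta_n^a}$ and the $L^1$ norm of the derivative of a density is translation invariant.

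The subtlety, and the main obstacle, is that Theorem \ref{classicEdge} is phrased for a single fixed sequence $(X_j)_{j\geq 1}$, whereas here the sequence $(\overline{X}_j^{\theta_n^a})_{j \in L_n}$ depends on $n$ both through the tilting parameter $\theta_n^a$ and through the index set $L_n$. I would handle this by invoking a uniform form of Petrov's theorem in which the $o(1)$ factor in (\ref{concluPetrov}) depends only on the three uniform bounds $\sup_j 1/s_j^2$, $\sup_j |\mu|_j^{m+1}$, and $\sup_j d_j$, together with the length of the summation. A careful inspection of the Fourier-inversion argument in \cite{Petrov 1975} confirms this: the control of the characteristic function near the origin uses only the moment bounds through a Taylor expansion of the cumulants, while the control away from the origin uses only $d_j$ via the smoothing inequality. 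Our assumptions (\ref{covThetan})--(\ref{cf2Thetan}) provide precisely these bounds uniformly in $n$ for the reindexed sequence $(\overline{X}_j^{\theta_n^a})_{j \in L_n}$, so applying this uniform version with summation length $|L_n| = n - \alpha_n$ yields (\ref{concluTTEdge}). Extracting and justifying this uniform statement from the original monograph is the only substantive task; once it is in hand the conclusion is immediate.
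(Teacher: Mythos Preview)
Your proposal is correct, and it matches the paper's approach. The paper does not give a separate proof of Theorem~\ref{EdgeTT}; it states the result and immediately passes to its corollary, implicitly relying on exactly the mechanism you describe: re-running Petrov's Fourier-analytic argument for the triangular array $(\overline{X}_j^{\theta_n^a})_{j\in L_n}$ and observing that every estimate in that proof depends on the summands only through the three uniform bounds (\ref{covThetan})--(\ref{cf2Thetan}) and the number of terms $|L_n|=n-\alpha_n$. The paper then carries out this verification in full detail for the harder Second Extension (random tilting parameter $t_{i,n}$), which subsumes the present deterministic case; your sketch of why the uniform version holds (Taylor control of the characteristic function near zero via the moment bounds, tail control via $d_j$) is precisely the skeleton of that proof.
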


\bigskip

\begin{corr}
\label{coroEdgeTT}
Assume that $\left( \mathcal{C}v \right)$, $\left( \mathcal{AM}(m+1) \right)$, $(\mathcal{C}f)$ and $(\mathcal{U}f)$ hold. Then, $(\ref{concluTTEdge})$ holds. 
\end{corr}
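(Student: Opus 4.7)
The goal is to verify, under the four assumptions $(\mathcal{C}v)$, $(\mathcal{AM}(m+1))$, $(\mathcal{C}f)$ and $(\mathcal{U}f)$, the three hypotheses $(\ref{covThetan})$, $(\ref{AM6Thetan})$ and $(\ref{cf2Thetan})$ of Theorem \ref{EdgeTT}. Once these are in hand, the conclusion $(\ref{concluTTEdge})$ follows directly from that theorem, so there is no extra analytic work to do.

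The key intermediate step, which was already recorded in the Fact preceding Corollary \ref{mjThetan}, is that there exists a compact set $K_{a} \subset \Theta$ with
\[
\left\{ \theta^{a}_{n} : n \geq 1 \right\} \subset K_{a}.
\]
The plan is to re-emphasise how this compactness comes from the Assumptions: $(\mathcal{H}\kappa)$ together with $(\mathcal{U}f)$ provides the envelopes $f_{-}$ and $f_{+}$, which are homeomorphisms from $(\alpha,\beta)$ onto $(A,B)$, so that the equation $\overline{m}_{1,n}(\theta^{a}_{n})=a$ forces $\theta^{a}_{n}$ to lie between $f_{+}^{-1}(a)$ and $f_{-}^{-1}(a)$, uniformly in $n$.

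Once $K_{a}$ is fixed, each of the three hypotheses of Theorem \ref{EdgeTT} reduces to a bound that is uniform over $j \geq 1$ and $\theta \in K_{a}$. Specifically, $(\mathcal{C}v)$ yields
\[
\sup\limits_{j \geq 1}\enskip \sup\limits_{\theta \in K_{a}} \frac{1}{s_{j}^{2}(\theta)} < \infty,
\]
which gives $(\ref{covThetan})$; $(\mathcal{AM}(m+1))$ yields $\sup_{j \geq 1}\sup_{\theta \in K_{a}} |\mu|_{j}^{m+1}(\theta) < \infty$, giving $(\ref{AM6Thetan})$; and $(\mathcal{C}f)$ gives $\sup_{j \geq 1}\sup_{\theta \in K_{a}} d_{j}(\theta) < \infty$, which is exactly $(\ref{cf2Thetan})$. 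In each case the uniform control over $\theta \in K_{a}$ is then specialised to $\theta = \theta^{a}_{n}$.

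There is no real obstacle beyond bookkeeping; the content of the corollary is the observation that the Assumptions were designed precisely to transfer uniform-in-$\theta$ bounds on compact subsets of $\Theta$ into uniform-in-$n$ bounds along the sequence $(\theta^{a}_{n})$. The only point requiring a little care is to notice that the envelope argument giving $\{\theta^{a}_{n}\} \subset K_{a}$ uses $(\mathcal{U}f)$ in an essential way (without it, only pointwise-in-$n$ finiteness of $\theta^{a}_{n}$ could be asserted, which would not suffice for the $\mathcal{O}(1)$ statements), so I would record this dependence explicitly before concluding.
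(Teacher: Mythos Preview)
Your proposal is correct and matches the paper's intended argument: the paper does not write out a proof for this corollary, but the implicit route is exactly to combine the compactness $\{\theta^{a}_{n}:n\geq 1\}\subset K_{a}\subset\Theta$ (the Fact preceding Corollary~\ref{mjThetan}) with the uniform-over-compacts bounds in $(\mathcal{C}v)$, $(\mathcal{AM}(m+1))$ and $(\mathcal{C}f)$ to obtain $(\ref{covThetan})$, $(\ref{AM6Thetan})$ and $(\ref{cf2Thetan})$, after which Theorem~\ref{EdgeTT} applies directly. Your explicit remark that the envelope assumption $(\mathcal{U}f)$ is what forces $\theta^{a}_{n}\in[f_{+}^{-1}(a),f_{-}^{-1}(a)]$ uniformly in $n$ is a useful clarification; the paper's Fact attributes the compactness to $(\mathcal{H}\kappa)$ alone, but your sharper attribution is the right one.
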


\bigbreak

\begin{rem}
By Remark $\ref{remPnu}$, for $\nu = 3, 4, 5$, some $\mathcal{O} \left( \frac{1}{n^{(\nu-2)/2}} \right)$ appear in $P_{\nu,n}$. They are replaced by some $\frac{\mathcal{O}(1)} {(n-\alpha_{n})^{(\nu-2)/2}}$ in 
$\overline{P}_{\nu,L_{n}}$.
\end{rem}

\subsubsection{Second Extension}

\begin{theo}
Let $m$ be an integer with $m  \geq 3$. Assume that

\begin{equation}
\label{covTI}
\max\limits_{0 \leq i \leq k-1} \enskip \sup\limits_{j \geq 1}
 \enskip \frac{1}{s_{j}^{2}(t_{i,n})} = \mathcal{O}_{\mathcal{P}_{n}}(1), 
\end{equation}

\begin{equation}
\label{AM6TI}
\max\limits_{0 \leq i \leq k-1} \enskip \sup\limits_{j \geq 1} \enskip
|\mu|_{j}^{m+1}(t_{i,n}) = \mathcal{O}_{\mathcal{P}_{n}}(1),
\end{equation}

\begin{equation}
\label{cf2TI}
\max\limits_{0 \leq i \leq k-1} \enskip \sup\limits_{j \geq 1} \enskip
d_{j}(t_{i,n}) = \mathcal{O}_{\mathcal{P}_{n}}(1). 
\end{equation}

\noindent\\
Let $e \in \left\{1, 2 \right\}$. We recall that $\overline{q}_{i+e,n}$ is the density of 
$\overline{S}_{i+e,n}$, defined by $(\ref{SienBarre})$. Then, 
\begin{equation}
\label{extensionEdgeTI}
\sup\limits_{x \in \mathbb{R}} \enskip 
\left| \overline{q}_{i+e,n}(x) - 
\mathfrak{n}(x) \left( 1+\sum\limits_{\nu=3}^{m} \overline{P}_{\nu,n}^{(i,e)}(x) \right) \right| 
= \frac{o_{\mathcal{P}_n}(1)}{(n-i-e+1)^{(m-2)/2}},  
\end{equation}

\noindent
where the $\overline{P}_{\nu,n}^{(i,e)}$ are defined as the $P_{\nu,n}$, except that the $s_{1,n}^{2}$ and the 
$\mu_{1,n}^{\ell}$ are replaced respectively by $s_{i+e,n}^{2}(t_{i,n})$ and 
$\mu_{i+e,n}^{\ell}(t_{i,n})$. 
\end{theo}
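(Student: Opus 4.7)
The plan is to reduce the claim to the First Extension (Corollary \ref{coroEdgeTT}) by treating the random tilting parameter $t_{i,n}$ as a deterministic parameter $\tau \in \Theta$, and then transferring the resulting deterministic Edgeworth bound to a $\mathcal{P}_n$-probabilistic one via hypotheses (\ref{covTI})--(\ref{cf2TI}). For each fixed $\tau \in \Theta$, the sequence $(\widetilde{X}_j^\tau)_{j \geq 1}$ is independent under $\mathcal{P}^\tau$, and the density of the normalized sum $(s_{i+e,n}^2(\tau))^{-1/2}(\sum_{j=i+e}^n\widetilde{X}_j^\tau - \sum_{j=i+e}^n m_j(\tau))$, call it $\overline{q}_{i+e,n}^\tau$, is a deterministic object to which Theorem \ref{EdgeTT} applies. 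Since $t_{i,n}$ is $\mathcal{A}_n$-measurable, the random density $\overline{q}_{i+e,n}$ from (\ref{SienBarre}) satisfies $\overline{q}_{i+e,n}(x) = \overline{q}_{i+e,n}^{t_{i,n}}(x)$ $\mathcal{P}_n$-a.s.

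The crucial point is that in Theorem \ref{classicEdge} the ``$o(1)$'' in the error depends only on the three uniform bounds (\ref{covClassique})--(\ref{condCF2}). Consequently, if one considers the family indexed by $\tau$ in a compact $K \subset \Theta$ and verifies that
$$\sup_{\tau \in K}\sup_{j \geq 1}\frac{1}{s_j^2(\tau)},\quad \sup_{\tau \in K}\sup_{j \geq 1} |\mu|_j^{m+1}(\tau),\quad \sup_{\tau \in K}\sup_{j \geq 1} d_j(\tau)$$
are finite, then Corollary \ref{coroEdgeTT} yields a rate $r_{n-i-e+1}(K)/(n-i-e+1)^{(m-2)/2}$ with $r_{n-i-e+1}(K) \to 0$, uniformly in $\tau \in K$ and in $i \in \{0,\dots,k-1\}$ (provided $n-i \to \infty$). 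Finiteness of these compact-uniform bounds follows directly from Assumptions $(\mathcal{C}v)$, $(\mathcal{AM}(m+1))$, $(\mathcal{U}f)$, $(\mathcal{C}f)$.

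To conclude, fix $\epsilon > 0$. By hypotheses (\ref{covTI})--(\ref{cf2TI}) together with Fact \ref{ppteMomentsTin} and its Remark, there exist $A_\epsilon > 0$ and $N_\epsilon \in \mathbb{N}$ such that for $n \geq N_\epsilon$,
$$B_{n,\epsilon} := \left\{ \max_{0 \leq i \leq k-1} |t_{i,n}| \leq A_\epsilon \right\}$$
satisfies $\mathcal{P}_n(B_{n,\epsilon}) \geq 1 - \epsilon$. On $B_{n,\epsilon}$, every $t_{i,n}$ lies in the compact set $K_\epsilon := [-A_\epsilon, A_\epsilon] \cap \Theta$, so the uniform deterministic bound of the preceding paragraph applies pointwise, yielding
$$\sup_{x \in \mathbb{R}} \left| \overline{q}_{i+e,n}(x) - \mathfrak{n}(x)\Bigl(1 + \sum_{\nu=3}^m \overline{P}_{\nu,n}^{(i,e)}(x)\Bigr) \right| \leq \frac{r_{n-i-e+1}(K_\epsilon)}{(n-i-e+1)^{(m-2)/2}},$$
which is precisely the required $o_{\mathcal{P}_n}(1)/(n-i-e+1)^{(m-2)/2}$ rate. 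The main obstacle is justifying the uniformity assertion of the second paragraph: this requires inspecting the proof of Theorem \ref{classicEdge} and its extension \ref{EdgeTT} to verify that every implicit constant in the ``$o(1)$'' depends only on the listed uniform bounds on inverse variances, absolute moments and $L^1$-norms of derivatives of densities. Once this is granted, the passage from the deterministic rate to the $o_{\mathcal{P}_n}$ rate is routine.
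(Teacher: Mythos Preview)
Your strategy differs from the paper's. The paper does not freeze $t_{i,n}$ and invoke a uniform-in-$\tau$ version of the First Extension; instead it reruns Petrov's characteristic-function proof directly with the random tilting parameter, Taylor-expanding $\widetilde{\xi}_j(\tau)$, bounding the remainder via the hypotheses (\ref{covTI})--(\ref{AM6TI}) to get $\max_i|\delta_{i,n}| = o_{\mathcal{P}_n}(1)$, splitting the Fourier integral at $|\tau|=n^{\rho}$, and using (\ref{cf2TI}) for the tail. In other words, the paper carries the $o_{\mathcal{P}_n}(1)$ bookkeeping through each line of Petrov rather than appealing to a black-box uniform statement.

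Your approach is a legitimate alternative in spirit, but as written it has two gaps. First, you derive $\max_i |t_{i,n}| = \mathcal{O}_{\mathcal{P}_n}(1)$ from the hypotheses (\ref{covTI})--(\ref{cf2TI}) ``together with Fact \ref{ppteMomentsTin} and its Remark''; but Fact \ref{ppteMomentsTin} goes the other way (it deduces moment bounds \emph{from} boundedness of $t_{i,n}$), and the Remark merely announces Theorem \ref{maxTi}, which is proved later under the standing Assumptions. So you are not proving the theorem from its stated hypotheses but from a forward reference; moreover (\ref{covTI})--(\ref{cf2TI}) alone do not imply that $t_{i,n}$ lies in a compact set. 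Second, the lemma you actually need --- that the $o(1)$ in Theorem \ref{classicEdge} depends only on the three suprema $\sup_j 1/s_j^2$, $\sup_j |\mu|_j^{m+1}$, $\sup_j d_j$ --- is precisely the substantive content here, and you explicitly defer it (``requires inspecting the proof''). Corollary \ref{coroEdgeTT} does not give you this, since it is stated only for the specific parameter $\theta_n^a$. The paper's proof is exactly that inspection, carried out once and for all in the random setting; your route would require stating and proving a separate quantitative deterministic lemma first, which is no less work.
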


\begin{proof}
We follow the lines of the proof of Theorem $\ref{classicEdge}$, given in \cite{Petrov 1975}. 
For $j \geq 1$, let $\widetilde{\xi}_{j}$ be the characteristic function of 
$\widetilde{X_{j}}^{t_{i,n}}$. Then, for any $\tau \in \mathbb{R}$, 
\begin{equation}
\widetilde{\xi}_{j}(\tau) =
\int \exp(i\tau x) \frac{\exp(t_{i,n}x)p_{j}(x)}{\Phi_{j}(t_{i,n})}dx 
\end{equation}

\noindent
is a r.v. defined on $(\Omega_{n}, \mathcal{A}_{n})$. Performing a Taylor expansion of $\exp(i\tau x)$, we obtain that 
\begin{equation}
\label{devXij}
\widetilde{\xi}_{j}(\tau) =
1 + \frac{s_{j}^{2}(t_{i,n})}{2} (i\tau)^{2} + 
\sum\limits_{\nu=3}^{m} \frac{\mu_{j}^{\nu}(t_{i,n})}{\nu!} (i\tau)^{\nu} + r_{j}(\tau).  
\end{equation}

\noindent
Then, we deduce from Fact $\ref{ppteMomentsTin}$ that 
\begin{equation}
\label{resteTin}
\sum\limits_{j=i+e}^{n} r_{j} \left( \frac{\tau}{\sigma_{i+e,n}} \right) \leq
\frac{\delta_{i,n}}{(n-i-e+1)^{(m-2)/2}} |\tau|^{m}, \quad \textrm{where} \enskip
\max\limits_{0 \leq i \leq k-1} |\delta_{i,n}| = o_{\mathcal{P}_{n}}(1). 
\end{equation}

\noindent
For any $n \geq 1$, and $\omega \in \Omega_{n}$, we consider a triangular array whose row of index $n$ is composed of the $n-i-e+1$ \textit{independent} r.v.'s
\begin{equation*}
\left( \overline{X}_{j}^{t_{i,n}(\omega)} \right)_{i+e \leq j \leq n}
\end{equation*}

\noindent
Let $\overline{\xi}_{i+e,n}$ be the characteristic function of 
$\overline{S}_{i+e,n}^{t_{i,n}}$, given by $\overline{\xi}_{i+e,n}(\tau) = \int \exp(i\tau x) \overline{q}_{i+e,n}(x) dx$. By independence of the $\left( \overline{X}_{j}^{t_{i,n}(\omega)} \right)_{i+e \leq j \leq n}$ and $(\ref{devXij})$ combined with $(\ref{resteTin})$, we obtain that for suitable some constant $\rho > 0$, for $|\tau| \leq n^{\rho}$, 
\begin{equation}  
\label{controlePetrov}
\left| \overline{\xi}_{i+e,n}(\tau) - u_{m,n}(\tau) \right| \leq 
\frac{\delta_{i,n}}{(n-i-e+1)^{(m-2)/2}} \left(|\tau|^{m} + |\tau|^{3(m-1)} \right) \exp\left(- \frac{\tau^{2}}{2}\right),
\end{equation}

\noindent
where $u_{m,n}$ is the Fourier transform of $\mathfrak{n}(x) \left( 1+\sum\limits_{\nu=3}^{m} \overline{P}_{\nu,n}^{(i,e)}(x) \right)$ and $\max\limits_{0 \leq i \leq k-1} |\delta_{i,n}| = o_{\mathcal{P}_{n}}(1)$. 

\noindent\\
Now, we have that  
\begin{align} 
I &:= \int\limits_{-\infty}^{\infty} 
\left| \overline{\xi}_{i+e,n}(\tau) - u_{m,n}(\tau) \right| d\tau \\
&\leq \label{integralFourier}
 \int\limits_{|\tau| \leq n^{\rho}} 
\left| \overline{\xi}_{i+e,n}(\tau) - u_{m,n}(\tau) \right| d\tau + 
\int\limits_{|\tau| > n^{\rho}} 
\left| u_{m,n}(\tau) \right| d\tau +
\int\limits_{|\tau| > n^{\rho}} 
\left| \overline{\xi}_{i+e,n}(\tau) \right| d\tau. 
\end{align}

\noindent
Then, we obtain from $(\ref{controlePetrov})$ that 
\begin{equation*} 
\int\limits_{|\tau| \leq n^{\rho}} 
\left| \overline{\xi}_{i+e,n}(\tau) - u_{m,n}(\tau) \right| d\tau = 
\frac{o_{\mathcal{P}_{n}}(1)}{(n-i-e+1)^{(m-2)/2}}. 
\end{equation*}

\noindent
Then, using general results on characteristic functions (see Lemma 12 in \cite {Petrov 1975}), we prove that 
\begin{equation}
\int\limits_{|\tau| > n^{\rho}} 
\left| u_{m,n}(\tau) \right| d\tau = 
\frac{o_{\mathcal{P}_{n}}(1)}{(n-i-e+1)^{(m-2)/2}}. 
\end{equation}

\noindent
Now, $(\ref{cf2TI})$ implies that for any $\alpha > 0$ and $\eta > 0$, 
\begin{equation}
\max\limits_{0 \leq i \leq k-1} (n-i-e+1)^{\alpha} \int\limits_{|\tau| > \eta}
\prod\limits_{j=i+e}^{n} \left| \widetilde{\xi}_{j}(\tau) \right| d\tau =
o_{\mathcal{P}_{n}}(1), 
\end{equation}

\noindent
which implies in turn that
\begin{equation}
\int\limits_{|\tau| > n^{\rho}} 
\left| \overline{\xi}_{i+e,n}(\tau) \right| d\tau =
\frac{o_{\mathcal{P}_{n}}(1)}{(n-i-e+1)^{(m-2)/2}}. 
\end{equation}

\noindent
Considering $(\ref{integralFourier})$, we deduce that   
\begin{equation*}
I = \frac{o_{\mathcal{P}_{n}}(1)}{(n-i-e+1)^{(m-2)/2}}. 
\end{equation*}

\noindent
Then, Fourier inversion yields that 
\begin{equation}
\overline{q}_{i+e,n}(x) - 
\mathfrak{n}(x) \left( 1+\sum\limits_{\nu=3}^{m} \overline{P}_{\nu,n}^{(i,e)}(x) \right) = 
\frac{1}{2\pi} \int\limits_{-\infty}^{\infty} \exp(-i\tau x) (\overline{\xi}_{i+e,n}(\tau) - u_{m,n}(\tau)) d\tau. 
\end{equation}

\noindent
Therefore,  
\begin{equation}
\sup\limits_{x \in \mathbb{R}} \enskip 
\left| \overline{q}_{i+e,n}(x) - 
\mathfrak{n}(x) \left( 1+\sum\limits_{\nu=3}^{m} \overline{P}_{\nu,n}^{(i,e)}(x) \right) \right|  \leq \frac{I}{2\pi} = 
\frac{o_{\mathcal{P}_{n}}(1)}{(n-i-e+1)^{(m-2)/2}}.
\end{equation}

\end{proof}

\bigskip

\begin{corr}
\label{condPetrovTI}
Assume that $\left( \mathcal{C}v \right)$, $\left( \mathcal{AM}(m+1) \right)$, $(\mathcal{C}f)$ hold, and that 
\begin{equation}
\max\limits_{0 \leq i \leq k-1} |t_{i,n}| = \mathcal{O}_{\mathcal{P}_{n}}(1)
\end{equation}

\noindent
Then, $(\ref{extensionEdgeTI})$ holds. 
\end{corr}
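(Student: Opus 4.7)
The plan is to reduce the corollary to a direct application of the preceding theorem. The three quantitative hypotheses $(\ref{covTI})$, $(\ref{AM6TI})$ and $(\ref{cf2TI})$ will be verified from the assumptions of the corollary, and the conclusion $(\ref{extensionEdgeTI})$ will then follow with no further work.

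For the first two of these, I would invoke Fact \ref{ppteMomentsTin}, which is tailored exactly to this situation: under the hypothesis $\max_{0 \leq i \leq k-1}|t_{i,n}| = \mathcal{O}_{\mathcal{P}_n}(1)$, it converts the deterministic pointwise-in-$\theta$ control provided by the global assumptions into in-probability control of the same quantities evaluated at the random tilting parameters $t_{i,n}$. Using $(\mathcal{C}v)$ gives $(\ref{covTI})$ directly, while $(\mathcal{AM}(m+1))$ combined with the analogous moment statement of Fact \ref{ppteMomentsTin} (whose proof, given there for $3 \leq \ell \leq 6$, extends verbatim to exponent $m+1$: the only ingredients are the compactness of the localizing set and uniform boundedness on compact subsets of $\Theta$) yields $(\ref{AM6TI})$.

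For $(\ref{cf2TI})$, since Fact \ref{ppteMomentsTin} does not explicitly treat $d_j(\theta)$, I would reproduce its scheme directly. Given $\epsilon > 0$, the hypothesis on $t_{i,n}$ produces some $A_\epsilon > 0$ such that
\begin{equation*}
\mathcal{P}_n \Bigl( \max_{0 \leq i \leq k-1} |t_{i,n}| \leq A_\epsilon \Bigr) \geq 1 - \epsilon
\end{equation*}
for all $n$ large enough. On this event, $d_j(t_{i,n}) \leq \sup_{\theta \in [-A_\epsilon, A_\epsilon]} d_j(\theta)$ for every $i$ and $j$, and applying $(\mathcal{C}f)$ to the compact $K = [-A_\epsilon, A_\epsilon] \subset \Theta$ bounds this supremum uniformly in $j$, which is exactly $(\ref{cf2TI})$.

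There is no real obstacle in this argument; the only point that merits care is checking that the compactness-and-uniformity scheme of Fact \ref{ppteMomentsTin} generalizes to the exponent $m+1$ and to the $L^1$-norm of $d\widetilde{p}_j^\theta/dx$, as sketched above. All other work has already been done in establishing the extension theorem whose hypotheses we are verifying.
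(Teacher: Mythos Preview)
Your proposal is correct and is exactly the intended argument: the paper states this corollary without proof, and the implicit reasoning is precisely to feed $(\mathcal{C}v)$, $(\mathcal{AM}(m+1))$ and $(\mathcal{C}f)$ through the scheme of Fact~\ref{ppteMomentsTin} (together with the hypothesis $\max_i |t_{i,n}| = \mathcal{O}_{\mathcal{P}_n}(1)$) to obtain $(\ref{covTI})$, $(\ref{AM6TI})$ and $(\ref{cf2TI})$, and then invoke the preceding extension theorem. Your remarks on extending the moment exponent to $m+1$ and on handling $d_j$ by the same compactness-plus-uniformity mechanism are appropriate and match how the paper itself argues in the proof of Fact~\ref{ppteMomentsTin}.
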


\begin{rem}
By Remark $\ref{remPnu}$, for $\nu = 3, 4, 5$, some $\mathcal{O} \left( \frac{1}{n^{(\nu-2)/2}} \right)$ appear in $P_{\nu,n}$. They are replaced by some $\frac{\mathcal{O}_{\mathcal{P}_{n}} (1)} {(n-i-1)^{(\nu-2)/2}}$ in 
$P_{\nu,n}^{(i,e)}$.
\end{rem}

\subsection{Moments of $Y_{j}$}

\textit{Throughout this Section 3.3, all the tilted densities considered pertain to 
$\theta = \theta^{a}_{n}$, defined by} 
\begin{equation}
\overline{m}_{1,n}(\theta^{a}_{n}) = a.
\end{equation}

\noindent\\
The moments of the $Y_{j}$'s are obtained by integration of the conditional density. As expected, their first order approximations are the moments of $\widetilde{X_{j}}$.

\begin{lem} \label{momentsYj}
\begin{equation}
\label{espYj} 
\max\limits_{1 \leq j \leq n} \left| \mathbb{E}_{\mathcal{P}_{n}}[Y_j] - m_{j}(\theta^{a}_{n}) \right|  = 
\mathcal{O}\left(\frac{1}{\sqrt{n}}\right).
\end{equation}

\end{lem}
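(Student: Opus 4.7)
My plan is to compute $\mathbb{E}_{\mathcal{P}_n}[Y_j]$ by integrating against the conditional density of $X_j$ given $\{S_{1,n}=na\}$ supplied by Fact~\ref{densCond}, then compare with $m_j(\theta_n^a)$ via an Edgeworth expansion. Taking $J_n=\{j\}$ and $L_n:=\{1,\dots,n\}\setminus\{j\}$ in Fact~\ref{densCond} and exploiting invariance of the conditional density under the tilting operation at $\theta=\theta_n^a$, the conditional density rewrites as
\[ \frac{\widetilde p_j^{\theta_n^a}(x)\,\widetilde p_{S_{L_n}}^{\theta_n^a}(na-x)}{\widetilde p_{S_{1,n}}^{\theta_n^a}(na)}. \]
Since $\sum_{i=1}^n m_i(\theta_n^a)=na$, the argument $na-x$ centered and normalized becomes $(m_j(\theta_n^a)-x)/\sigma_{L_n}(\theta_n^a)$, and $\widetilde p_{S_{1,n}}^{\theta_n^a}(na)=\overline q_{1,n}(0)/\sigma_{1,n}(\theta_n^a)$. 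Writing $m_j:=m_j(\theta_n^a)$, $\sigma_{L_n}:=\sigma_{L_n}(\theta_n^a)$, using the change of variable $u=x-m_j$ and that the conditional density integrates to one,
\[ \mathbb{E}_{\mathcal{P}_n}[Y_j]-m_j=\frac{\sigma_{1,n}}{\sigma_{L_n}\,\overline q_{1,n}(0)}\int u\,\widetilde p_j^{\theta_n^a}(u+m_j)\,\overline q_{L_n}(-u/\sigma_{L_n})\,du. \]

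I would then apply Corollary~\ref{coroEdgeTT} with $m=3$ to the numerator density $\overline q_{L_n}$ and (with $\alpha_n=0$) to $\overline q_{1,n}(0)$, giving $\overline q_{L_n}(y)=\mathfrak n(y)(1+\overline P_{3,L_n}(y))+r_n(y)$ with $\sup_y|r_n(y)|=o(n^{-1/2})$ and $\overline P_{3,L_n}(y)=\mathcal{O}(n^{-1/2})H_3(y)$ by Corollary~\ref{mjThetan}; since $H_3(0)=0$, we get $\overline q_{1,n}(0)=\mathfrak n(0)+o(n^{-1/2})$, so the prefactor above is $\mathcal{O}(1)$. I split the integral according to this decomposition of $\overline q_{L_n}$. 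The Gaussian part is $\mathcal{O}(1/n)$ because the centering $\int u\,\widetilde p_j^{\theta_n^a}(u+m_j)\,du=0$ combined with the global inequality $|\mathfrak n(z)-\mathfrak n(0)|\le \mathfrak n(0)z^2/2$ lets the leading constant cancel, leaving an integral that is controlled by the uniformly bounded third centered moment (Corollary~\ref{mjThetan}). The Hermite correction is $\mathcal{O}(1/n)$ via $\overline P_{3,L_n}(-u/\sigma_{L_n})=\mathcal{O}(n^{-1/2})(u/\sigma_{L_n}+(u/\sigma_{L_n})^3)$ integrated against $u\,\widetilde p_j^{\theta_n^a}(u+m_j)$. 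The remainder part is bounded by $o(n^{-1/2})\cdot\mathbb{E}|\widetilde X_j^{\theta_n^a}-m_j|=o(n^{-1/2})$ by a crude $L^\infty$-$L^1$ estimate. Summing and taking the maximum over $j$ yields the claim.

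The main obstacle is the \emph{uniformity in $j$} of the Edgeworth expansion: one must check that the $o$- and $\mathcal{O}$-constants in Corollary~\ref{coroEdgeTT} applied with $L_n=\{1,\dots,n\}\setminus\{j\}$ do not depend on the dropped index. This should follow because Assumptions $(\mathcal{C}v)$, $(\mathcal{AM}6)$, $(\mathcal{C}f)$ and $(\mathcal{U}f)$ together with Corollary~\ref{mjThetan} furnish moment bounds at $\theta_n^a$ that are uniform in the index, so all quantitative estimates entering Petrov's argument survive removing one index, yielding a uniform-in-$j$ Edgeworth remainder and hence a uniform bound on $|\mathbb{E}_{\mathcal{P}_n}[Y_j]-m_j(\theta_n^a)|$.
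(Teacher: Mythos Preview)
Your proposal is correct and follows essentially the same route as the paper: both write the conditional density via Fact~\ref{densCond}, tilt at $\theta_n^a$, normalize, apply the $m=3$ Edgeworth expansion of Corollary~\ref{coroEdgeTT} to $\overline q_{L_n}$ and $\overline q_{1,n}(0)$, and exploit the elementary bound $|e^{-z^2/2}-1|\le z^2/2$ together with the uniform moment controls of Corollary~\ref{mjThetan}. The only cosmetic difference is that you center first (substituting $u=x-m_j$) and split the Edgeworth expansion into Gaussian/Hermite/remainder pieces, whereas the paper keeps the variable $x$, absorbs the $H_3$-term into an $\mathcal{O}(n^{-1/2})$ correction of the conditional density, and bounds $\int x\,\widetilde p_j(x)\exp(-\gamma_n^j(x)^2/2)\,dx$ by splitting the integral over $\{x>0\}$ and $\{x<0\}$; the underlying estimates are identical.
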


\begin{proof}
For any $n \geq 1$ and $1 \leq j \leq n$, we have that 
\begin{equation}
\label{espYjInt}
\mathbb{E}_{\mathcal{P}_{n}}[Y_{j}] = \int x p(X_{j}=x|S_{1,n}=na) dx
= \int x p(\widetilde{X}_{j}=x | \widetilde{S}_{1,n}=na) dx. 
\end{equation}

\noindent
Let $L_{n}=\left\{1, ..., n\right\}\setminus\lbrace{j}\rbrace$. Normalizing, we obtain that 
\begin{equation}
p(\widetilde{X_{j}}=x | \widetilde{S}_{1,n}=na) = \widetilde{p}_{j}(x)
\left( \frac{\sigma_{1,n}(\theta^{a}_{n})}{\sigma_{L_{n}}(\theta^{a}_{n})} \right)
\frac{p_{\overline{S}_{L_{n}}}(\gamma_{n}^{j}(x))} {p_{\overline{S}_{1,n}}(0)},
\quad \textrm{where} \quad 
\gamma_{n}^{j}(x) := \frac{m_{j}(\theta^{a}_{n})-x}{\sigma_{L_{n}}(\theta^{a}_{n})}.
\end{equation}

\noindent
Since $\left( \mathcal{AM}6 \right)$ implies $\left( \mathcal{AM}4 \right)$, we get from Corollary $\ref{coroEdgeTT}$ with $m=3$ that
\begin{equation}
p_{\overline{S}_{L_{n}}}(\gamma_{n}^{j}(x)) = 
\mathfrak{n}(\gamma_{n}^{j}(x)) \left[ 1 + 
\frac{\mu_{L_{n}}^{3}(\theta^{a}_{n})}{6(s_{L_{n}}^{2}(\theta^{a}_{n}))^{3/2}} H_{3}(\gamma_{n}^{j}(x)) \right]
 + \frac{o(1)}{\sqrt{n-1}}
\end{equation}

\noindent
and 
\begin{equation}
p_{\overline{S}_{1,n}}(0) = \mathfrak{n}(0)+ \frac{o(1)}{\sqrt{n}}.
\end{equation}


\noindent\\
Now, $\left( \mathcal{C}v \right)$, $\left( \mathcal{AM}6 \right)$ and the boundedness of the sequence $(\theta^{a}_{n})_{n \geq 1}$ imply readily that 
\begin{equation}
\frac{\sigma_{1,n}(\theta^{a}_{n})}{\sigma_{L_{n}}(\theta^{a}_{n})} = 
1+\mathcal{O}\left(\frac{1}{n}\right) 
\quad \textrm{and} \quad 
\frac{\mu_{L_{n}}^{3}(\theta^{a}_{n})}{6(s_{L_{n}}^{2}(\theta^{a}_{n}))^{3/2}} = 
\mathcal{O}\left( \frac{1}{\sqrt{n-1}} \right).
\end{equation}

\noindent
Since the functions $\theta \mapsto \mathfrak{n}(\theta)$ and 
$\theta \mapsto \mathfrak{n}(\theta)H_{3}(\theta)$ are bounded, we deduce that
\begin{align}
\frac{p_{\overline{S}_{L_{n}}}(\gamma_{n}^{j}(x))} {p_{\overline{S}_{1,n}}(0)}
&= \left\{\mathfrak{n}(\gamma_{n}^{j}(x)) \left( 1+\mathcal{O}\left( \frac{1}{\sqrt{n}}\right)H_{3}(\gamma_{n}^{j}(x)) \right) +  \frac{o(1)}{\sqrt{n-1}} \right\}
\left\{ \frac{1}{\mathfrak{n}(0)} + \frac{o(1)}{\sqrt{n}} \right\}\\
&= \frac{\mathfrak{n}(\gamma_{n}^{j}(x))}{\mathfrak{n}(0)} + \mathcal{O}\left(\frac{1}{\sqrt{n}}\right)
= \exp \left( -\frac{\gamma_{n}^{j}(x)^{2}}{2} \right) + \mathcal{O}\left(\frac{1}{\sqrt{n}}\right).  
\end{align}

\noindent
Consequently, 
\begin{equation}
\label{densCondThetan}
p(\widetilde{X}_{j}=x|\widetilde{S}_{1,n}=na) = \widetilde{p}_{j}(x)
\left( 1+\mathcal{O}\left(\frac{1}{n}\right) \right)
\left\{ \exp\left(-\frac{\gamma_{n}^{j}(x)^{2}}{2}\right) + \mathcal{O}\left(\frac{1}{\sqrt{n}}\right) \right\}. 
\end{equation}

\noindent
Recalling that $\int x \widetilde{p}_{j}(x)dx = m_{j}(\theta_{n}^{a})$, we deduce from $(\ref{espYjInt})$ and 
$(\ref{densCondThetan})$ that 
\begin{equation}
\mathbb{E}_{\mathcal{P}_{n}}[Y_{j}] = \left\{ \int x \widetilde{p}_{j}(x) \exp\left(-\frac{\gamma_{n}^{j}(x)^{2}}{2}\right) dx +
m_{j}(\theta_{n}^{a}) \mathcal{O}\left(\frac{1}{\sqrt{n}}\right) \right\}
\left( 1+\mathcal{O}\left(\frac{1}{n}\right) \right). 
\end{equation}

\noindent
Therefore, it is enough to prove that
\begin{equation}
\int x \widetilde{p}_{j}(x) \exp\left(-\frac{\gamma_{n}^{j}(x)^{2}}{2}\right) dx = 
m_{j}(\theta_{n}^{a}) + \mathcal{O}\left(\frac{1}{\sqrt{n}}\right)
\end{equation}

\noindent
Now, for any $u \in \mathbb{R}$, 
\begin{equation}
\label{expUdev}
1 - u^{2}/2 \leq  \exp \left( - u^{2}/2  \right)  \leq 1, 
\end{equation}

\noindent
from which we deduce that 
\begin{equation}
\label{xpositif}
\int\limits_{0}^{\infty} x\widetilde{p}_{j}(x)dx -
\frac{1}{2}\int\limits_{0}^{\infty} x\widetilde{p}_{j}(x)\gamma_{n}^{j}(x)^{2}dx \leq
\int\limits_{0}^{\infty} x \widetilde{p}_{j}(x) \exp\left(-\frac{\gamma_{n}^{j}(x)^{2}}{2}\right) dx \leq 
\int\limits_{0}^{\infty} x\widetilde{p}_{j}(x)dx 
\end{equation}

\noindent
and 
\begin{equation}
\label{xnegatif}
\int\limits_{-\infty}^{0} x\widetilde{p}_{j}(x)dx \leq
\int\limits_{-\infty}^{0} x \widetilde{p}_{j}(x) \exp\left(-\frac{\gamma_{n}^{j}(x)^{2}}{2}\right) dx \leq 
\int\limits_{-\infty}^{0} x\widetilde{p}_{j}(x)dx -
\frac{1}{2}\int\limits_{-\infty}^{0} x\widetilde{p}_{j}(x)\gamma_{n}^{j}(x)^{2}dx. 
\end{equation}

\noindent
Adding $(\ref{xpositif})$ and $(\ref{xnegatif})$, we obtain that 
\begin{equation}
\label{encadrementThetan}
m_{j}(\theta_{n}^{a}) - \frac{1}{2}\int\limits_{0}^{\infty} x\widetilde{p}_{j}(x)\gamma_{n}^{j}(x)^{2}dx \leq
\int\limits x \widetilde{p}_{j}(x) \exp\left(-\frac{\gamma_{n}^{j}(x)^{2}}{2}\right) dx \leq 
m_{j}(\theta_{n}^{a}) - \frac{1}{2}\int\limits_{-\infty}^{0} x\widetilde{p}_{j}(x)\gamma_{n}^{j}(x)^{2}dx. 
\end{equation}

\noindent
For any $B \in \mathcal{B}(\mathbb{R})$, we have that
\begin{align}
\int_{B} x \widetilde{p}_{j}(x) \gamma_{n}^{j}(x)^{2} dx &= 
\frac{1}{s_{L_{n}}^{2}(\theta^{a}_{n})}
\left\{ \int_{B} x \widetilde{p}_{j}(x) \left(m_{j}(\theta^{a}_{n})-x \right)^{2}dx \right\}\\
\label{tauxCvThetan}
&= \frac{1}{s_{L_{n}}^{2}(\theta^{a}_{n})}
\sum_{i=0}^{2} \binom{2}{i} m_{j}(\theta^{a}_{n})^{2-i} (-1)^{i} \int_{B} x^{1+i} \widetilde{p}_{j}(x)dx. 
\end{align}

\noindent
Let $i \in \left\{ 0, 1, 2 \right\}$. Recalling that $L_{n}=\left\{1, ..., n\right\}\setminus\lbrace{j}\rbrace$, we get from $\left( \mathcal{C}v \right)$ and $(\mathcal{U}f)$ that 
\begin{equation}
\max\limits_{1 \leq j \leq n} \enskip \frac{1}{s_{L_{n}}^{2}(\theta^{a}_{n})} = 
\mathcal{O} \left( \frac{1}{n} \right)
\qquad \textrm{and} \qquad
\max\limits_{1 \leq j \leq n} \enskip \left| m_{j}(\theta^{a}_{n}) \right|^{2-i} = \mathcal{O}(1). 
\end{equation}

\noindent
Then, $\left( \mathcal{AM}6 \right)$ implies that for all $n \geq 1$, 
\begin{equation}
\max\limits_{1 \leq j \leq n} \enskip \left| \int_{B} x^{1+i} \widetilde{p}_{j}(x)dx \right| \leq
\max\limits_{1 \leq j \leq n} \enskip \int_{\mathbb{R}} |x|^{1+i} \widetilde{p}_{j}(x)dx \leq 
\sup\limits_{j \geq 1} \left\{ 1 + \sup\limits_{\theta \in K_{a}} \mathbb{E} \left[ \left| \widetilde{X}^{\theta}_{j} \right|^{6} \right] \right\} < \infty. 
\end{equation}

\noindent
So we deduce from $(\ref{tauxCvThetan})$ that 
\begin{equation}
\label{intB}
\max\limits_{1 \leq j \leq n} \enskip \int_{B} x \widetilde{p}_{j}(x) \gamma_{n}^{j}(x)^{2} dx = 
\mathcal{O} \left( \frac{1}{n} \right).   
\end{equation}

\noindent
Taking $B = (-\infty, 0)$ and $B = (0, \infty)$ in $(\ref{intB})$, we conclude the proof by 
$(\ref{encadrementThetan})$. 
\end{proof}

\begin{lem} \label{VarCovYj}
We have that 

\begin{equation}
\label{covYj}
\max\limits_{1 \leq j < j' \leq n} \left| \mathbb{E}_{\mathcal{P}_{n}}[Y_{j}Y_{j'}] - 
m_{j}(\theta^{a}_{n})m_{j'}(\theta^{a}_{n}) \right| = 
\mathcal{O}\left(\frac{1}{\sqrt{n}}\right) 
\end{equation}

\noindent
and
\begin{equation}
\label{varYj}
\max\limits_{1 \leq j \leq n} \left| \mathbb{E}_{\mathcal{P}_{n}}[Y_j^2] - \left( s_{j}^{2}(\theta^{a}_{n}) + m_j(\theta^{a}_{n})^2 \right) \right| = \mathcal{O}\left(\frac{1}{\sqrt{n}}\right).
\end{equation}

\end{lem}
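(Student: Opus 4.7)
The approach is to mimic the proof of Lemma \ref{momentsYj}, replacing the linear integrand $x$ either by $x^2$ (for the variance) or by the bilinear function $xx'$ on the joint conditional density of $(X_j, X_{j'})$ (for the covariance). Both estimates share the same architecture: expand the conditional density via a tilted Edgeworth expansion, isolate the principal term (which is the unconditional tilted moment), and show that the remainder is $\mathcal{O}(n^{-1/2})$.

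For (\ref{varYj}), I would start from
\[
\mathbb{E}_{\mathcal{P}_n}[Y_j^2] = \int x^2 \, p(\widetilde{X}_j = x \,|\, \widetilde{S}_{1,n} = na) \, dx
\]
and insert the expansion (\ref{densCondThetan}) already obtained in Lemma \ref{momentsYj}. Since $\int x^2 \widetilde{p}_j(x) \, dx = s_j^2(\theta^{a}_n) + m_j(\theta^{a}_n)^2$ is uniformly bounded by $(\mathcal{C}v)$ and Corollary \ref{mjThetan}, the $\mathcal{O}(n^{-1/2})$ Edgeworth remainder in (\ref{densCondThetan}) contributes a term of order $\mathcal{O}(n^{-1/2})$. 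Since $x^2 \geq 0$, the double inequality (\ref{expUdev}) applies directly without any splitting of the domain, and the residual error $\tfrac{1}{2}\int x^2 \widetilde{p}_j(x) \gamma_n^j(x)^2 \, dx$ is $\mathcal{O}(1/n)$ by expanding $(m_j(\theta^{a}_n) - x)^2$ and invoking $(\mathcal{AM}6)$, $(\mathcal{C}v)$, $(\mathcal{U}f)$, exactly as in (\ref{tauxCvThetan})--(\ref{intB}).

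For (\ref{covYj}), I would apply Fact \ref{densCond} with $J_n = \{j, j'\}$, tilt at $\theta^{a}_n$, and obtain
\[
p(\widetilde{X}_j = x, \widetilde{X}_{j'} = x' \,|\, \widetilde{S}_{1,n} = na) = \widetilde{p}_j(x) \widetilde{p}_{j'}(x') \, \frac{\sigma_{1,n}(\theta^{a}_n)}{\sigma_{L_n}(\theta^{a}_n)} \cdot \frac{p_{\overline{S}_{L_n}}(\gamma_n^{j,j'}(x,x'))}{p_{\overline{S}_{1,n}}(0)},
\]
where $L_n = \{1,\ldots,n\} \setminus \{j, j'\}$ and $\gamma_n^{j,j'}(x,x') := (m_j(\theta^{a}_n) + m_{j'}(\theta^{a}_n) - x - x')/\sigma_{L_n}(\theta^{a}_n)$. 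Corollary \ref{coroEdgeTT} with $m = 3$ then yields a two-variable analog of (\ref{densCondThetan}), from which the leading term $\iint xx' \widetilde{p}_j(x) \widetilde{p}_{j'}(x') \, dx\, dx' = m_j(\theta^{a}_n) m_{j'}(\theta^{a}_n)$ emerges, and the $\mathcal{O}(n^{-1/2})$ remainder again contributes $\mathcal{O}(n^{-1/2})$ because the latter product is uniformly bounded.

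The main (though mild) technical obstacle is that $xx'$ changes sign, so (\ref{expUdev}) cannot be applied uniformly to the integrand. I would split $\mathbb{R}^2$ into the two regions $\{xx' \geq 0\}$ and $\{xx' < 0\}$ and, on each, reproduce the sign-sensitive bracketing used in (\ref{xpositif})--(\ref{xnegatif}). Summing the resulting inequalities, the total deviation from $m_j(\theta^{a}_n) m_{j'}(\theta^{a}_n)$ is bounded by $\tfrac{1}{2} \iint |xx'|\, \widetilde{p}_j(x) \widetilde{p}_{j'}(x') \, \gamma_n^{j,j'}(x,x')^2 \, dx\, dx'$. Expanding $\gamma_n^{j,j'}(x,x')^2$ as a quadratic polynomial in $(x, x')$ and invoking $(\mathcal{AM}6)$, Corollary \ref{mjThetan}, and the bound $1/s_{L_n}^2(\theta^{a}_n) = \mathcal{O}(1/n)$ (a consequence of $(\mathcal{C}v)$ and $(\mathcal{U}f)$), this error is $\mathcal{O}(1/n)$. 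Combined with the $\mathcal{O}(n^{-1/2})$ remainder coming from the Edgeworth expansion, this yields (\ref{covYj}).
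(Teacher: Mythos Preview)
Your proposal is correct and follows essentially the same route as the paper: the paper sets $L_n=\{1,\dots,n\}\setminus\{j,j'\}$, derives the two-variable analogue of $(\ref{densCondThetan})$ via Corollary~\ref{coroEdgeTT} with $m=3$, and then writes ``as in the preceding proof, we get from $(\ref{expUdev})$ \dots'' to obtain $m_j(\theta^a_n)m_{j'}(\theta^a_n)+\mathcal{O}(n^{-1/2})$, declaring the second moment case ``quite similar''. Your explicit sign-splitting of $\mathbb{R}^2$ into $\{xx'\ge 0\}$ and $\{xx'<0\}$ is exactly the detail the paper leaves implicit under that phrase.
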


\begin{proof}
For any $1 \leq j < j' \leq n$, we have that 
\begin{align*}
\mathbb{E}_{\mathcal{P}_{n}}[Y_{j}Y_{j'}] = 
\int x x' p\left( \left. \widetilde{X_{j}}=x ; \widetilde{X_{j'}}=x' \right| 
\widetilde{S_{1,n}} = na \right) dxdx'.  
\end{align*}

\noindent
Let $L_{n}=\left\{1, ..., n\right\}\setminus\lbrace{j,j'}\rbrace$. Normalizing, we obtain that 
\begin{equation*}
p\left( \left. \widetilde{X_{j}}=x ; \widetilde{X_{j'}}=x' \right| 
\widetilde{S_{1,n}} = na \right) = 
\widetilde{p}_{j}(x) \widetilde{p}_{j'}(x')
\left( \frac{\sigma_{1,n}(\theta^{a}_{n})}{\sigma_{L_{n}}(\theta^{a}_{n})} \right)
\frac{p_{\overline{S}_{L_{n}}}\left(\Gamma_{n}^{j,j'}(x)\right)} {p_{\overline{S}_{1,n}}(0)},
\end{equation*}

\noindent
where
\begin{equation*}
\Gamma_{n}^{j}(x) := \frac{m_{j}(\theta^{a}_{n}) + m_{j'}(\theta^{a}_{n}) - x - x'}
{\sigma_{L_{n}}(\theta^{a}_{n})}.
\end{equation*}

\noindent\\
Since $\left( \mathcal{AM}4 \right)$ holds, we get from Corollary $\ref{coroEdgeTT}$ with $m=3$ that 
\begin{equation}
p\left( \left. \widetilde{X_{j}}=x ; \widetilde{X_{j'}}=x' \right| 
\widetilde{S_{1,n}} = na \right) = 
\widetilde{p}_{j}(x) \widetilde{p}_{j'}(x')
\left( 1+\mathcal{O}\left(\frac{1}{n}\right) \right)
\left\{ \exp\left(-\frac{\Gamma_{n}^{j}(x)^{2}}{2}\right) + 
\mathcal{O}\left(\frac{1}{\sqrt{n}}\right) \right\}. 
\end{equation}

\noindent
As in the preceding proof, we get from $(\ref{expUdev})$ (applied to 
$\exp\left(-\frac{\Gamma_{n}^{j}(x)^{2}}{2}\right)$) that, uniformly in $j$,   
\begin{align*}
\mathbb{E}_{\mathcal{P}_{n}}[Y_{j}Y_{j'}] &=
\int xx' \widetilde{p}_{j}(x) \widetilde{p}_{j'}(x') dxdx' +
\mathcal{O}\left(\frac{1}{\sqrt{n}}\right) \\
&= m_{j}(\theta^{a}_{n})m_{j'}(\theta^{a}_{n}) +
\mathcal{O}\left(\frac{1}{\sqrt{n}}\right).   
\end{align*}

\noindent
The proof of $(\ref{varYj})$ is quite similar. 
\end{proof}

\bigskip

\begin{corr} \label{covEtVarYj}
We have that 
\begin{equation}
\max\limits_{1 \leq j < j' \leq n} Cov_{\mathcal{P}_{n}}(Y_{j}, Y_{j}') = 
\mathcal{O}\left(\frac{1}{\sqrt{n}}\right) 
\end{equation}

\noindent
and
\begin{equation}
\label{covYj}
\max\limits_{1 \leq j \leq n} \left| Var_{\mathcal{P}_{n}}(Y_{j}) - \left( s_{j}^{2}(\theta^{a}_{n}) \right) \right| = \mathcal{O}\left(\frac{1}{\sqrt{n}}\right). 
\end{equation}

\end{corr}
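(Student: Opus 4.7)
The corollary follows essentially by direct combination of Lemma \ref{momentsYj} and Lemma \ref{VarCovYj}, using the uniform boundedness of $m_{j}(\theta^{a}_{n})$ provided by Corollary \ref{mjThetan}. There is no genuine obstacle; the content is a bookkeeping of error terms.

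My plan is to start from the definitional identity $Cov_{\mathcal{P}_{n}}(Y_{j}, Y_{j'}) = \mathbb{E}_{\mathcal{P}_{n}}[Y_{j}Y_{j'}] - \mathbb{E}_{\mathcal{P}_{n}}[Y_{j}]\,\mathbb{E}_{\mathcal{P}_{n}}[Y_{j'}]$. By $(\ref{covYj})$ of Lemma \ref{VarCovYj}, the first term equals $m_{j}(\theta^{a}_{n})m_{j'}(\theta^{a}_{n}) + \mathcal{O}(1/\sqrt{n})$, uniformly in $(j,j')$. By $(\ref{espYj})$ of Lemma \ref{momentsYj}, each factor in the product of expectations is $m_{\bullet}(\theta^{a}_{n}) + \mathcal{O}(1/\sqrt{n})$, so expanding the product and invoking Corollary \ref{mjThetan} (which provides $\sup_{n,j}|m_{j}(\theta^{a}_{n})| < \infty$) to absorb the cross terms yields
\begin{equation*}
\mathbb{E}_{\mathcal{P}_{n}}[Y_{j}]\,\mathbb{E}_{\mathcal{P}_{n}}[Y_{j'}] = m_{j}(\theta^{a}_{n})m_{j'}(\theta^{a}_{n}) + \mathcal{O}\!\left(\frac{1}{\sqrt{n}}\right),
\end{equation*}
uniformly in $(j,j')$. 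Subtracting gives the covariance bound.

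For the variance, I would apply the same scheme: write $Var_{\mathcal{P}_{n}}(Y_{j}) = \mathbb{E}_{\mathcal{P}_{n}}[Y_{j}^{2}] - (\mathbb{E}_{\mathcal{P}_{n}}[Y_{j}])^{2}$, use $(\ref{varYj})$ to replace the second moment by $s_{j}^{2}(\theta^{a}_{n}) + m_{j}(\theta^{a}_{n})^{2} + \mathcal{O}(1/\sqrt{n})$, and use $(\ref{espYj})$ to replace the squared first moment by $m_{j}(\theta^{a}_{n})^{2} + \mathcal{O}(1/\sqrt{n})$ (again using the boundedness of $m_{j}(\theta^{a}_{n})$ to control the cross term). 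Subtraction leaves $s_{j}^{2}(\theta^{a}_{n}) + \mathcal{O}(1/\sqrt{n})$, uniformly in $j$, which is the claim.

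The only point worth emphasizing is that the uniformity in $j$ (and in $(j,j')$) is inherited at every step from the uniformity already provided by the preceding lemmas and by Corollary \ref{mjThetan}; no new estimate is required.
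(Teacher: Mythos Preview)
Your proof is correct and follows essentially the same approach as the paper: expand the covariance and variance via their definitions, substitute the estimates from Lemmas \ref{momentsYj} and \ref{VarCovYj}, and invoke the uniform bound on $m_{j}(\theta^{a}_{n})$ (Corollary \ref{mjThetan}) to control the cross terms. If anything, you are slightly more explicit than the paper, which only writes out the covariance computation and leaves the variance case tacit.
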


\begin{proof}
We deduce from the preceding Lemmas that for any $1 \leq j < j' \leq n$,
\begin{align*}
Cov_{\mathcal{P}_{n}}(Y_{j}, Y_{j}') &=
\mathbb{E}_{\mathcal{P}_{n}}[Y_{j}Y_{j'}] - \mathbb{E}_{\mathcal{P}_{n}}[Y_{j}]
\mathbb{E}_{\mathcal{P}_{n}}[Y_{j'}] \\
&= \left( m_{j}(\theta^{a}_{n})m_{j'}(\theta^{a}_{n}) + \mathcal{O}\left(\frac{1}{\sqrt{n}}
\right) \right) - \left( m_{j}(\theta^{a}_{n})m_{j'}(\theta^{a}_{n}) + 
\mathcal{O}\left(\frac{1}{\sqrt{n}} \right) \right) \\
&= \mathcal{O}\left(\frac{1}{\sqrt{n}}\right). 
\end{align*} 
\end{proof}

\subsection{Proof of $\max\limits_{0 \leq i \leq k-1} |t_{i,n}| = \mathcal{O}_{\mathcal{P}_{n}}(1)$}

\noindent\\
For any $n \geq 1$ and $i=0, ..., k-1$, set  
\begin{equation}
V_{i+1,n} := \frac{1}{n-i} \sum\limits_{j=i+1}^{n} Z_{j} \quad \textrm{where} \quad
Z_{j} := Y_{j} - \mathbb{E}[Y_{j}]. 
\end{equation}

\begin{lem} \label{V1n}
We have that
\begin{equation}
\mathbb{E}_{\mathcal{P}_{n}} [V_{1,n}^2] = o(1).
\end{equation} 
\end{lem}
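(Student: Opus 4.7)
The plan is to exploit the fact that under $\mathcal{P}_{n}$, the sum $S_{1,n} = Y_1 + \cdots + Y_n$ is almost surely equal to $na$. Consequently, $V_{1,n}$ is not really a random variable but a deterministic quantity:
\begin{equation*}
V_{1,n} = \frac{1}{n}\sum_{j=1}^{n} Y_{j} - \frac{1}{n}\sum_{j=1}^{n} \mathbb{E}_{\mathcal{P}_{n}}[Y_{j}] = a - \frac{1}{n}\sum_{j=1}^{n} \mathbb{E}_{\mathcal{P}_{n}}[Y_{j}], \quad \mathcal{P}_{n}\text{-a.s.}
\end{equation*}
Hence $\mathbb{E}_{\mathcal{P}_{n}}[V_{1,n}^{2}] = \left( a - \frac{1}{n}\sum_{j=1}^{n} \mathbb{E}_{\mathcal{P}_{n}}[Y_{j}] \right)^{2}$, and it suffices to show that this quantity is $o(1)$.

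The key input is Lemma \ref{momentsYj}, which gives a uniform first-order approximation of $\mathbb{E}_{\mathcal{P}_{n}}[Y_{j}]$ by $m_{j}(\theta_{n}^{a})$ with an error $\mathcal{O}(1/\sqrt{n})$ that is uniform in $j$. Averaging over $j = 1, \ldots, n$ keeps the error of the same order, so
\begin{equation*}
\frac{1}{n}\sum_{j=1}^{n} \mathbb{E}_{\mathcal{P}_{n}}[Y_{j}] = \frac{1}{n}\sum_{j=1}^{n} m_{j}(\theta_{n}^{a}) + \mathcal{O}\!\left(\frac{1}{\sqrt{n}}\right) = \overline{m}_{1,n}(\theta_{n}^{a}) + \mathcal{O}\!\left(\frac{1}{\sqrt{n}}\right).
\end{equation*}
By the very definition of $\theta_{n}^{a}$, we have $\overline{m}_{1,n}(\theta_{n}^{a}) = a$, so the bracketed term in the expression for $V_{1,n}$ is $\mathcal{O}(1/\sqrt{n})$.

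Squaring then gives $\mathbb{E}_{\mathcal{P}_{n}}[V_{1,n}^{2}] = \mathcal{O}(1/n) = o(1)$, which is the claim. There is no real obstacle: the whole argument is a one-line consequence of the a.s.\ constraint $S_{1,n} = na$ combined with Lemma \ref{momentsYj} and the defining equation of $\theta_{n}^{a}$. The only thing to verify carefully is that the uniformity in $j$ of the $\mathcal{O}(1/\sqrt{n})$ error in Lemma \ref{momentsYj} is preserved when averaging, which is immediate since a uniform bound on the summands is preserved by Cesàro averaging.
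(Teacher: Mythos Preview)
Your proof is correct, and it is genuinely simpler than the paper's. The paper expands
\[
\mathbb{E}_{\mathcal{P}_{n}}[V_{1,n}^{2}]
=\frac{1}{n^{2}}\Bigl\{\sum_{j=1}^{n} Var_{\mathcal{P}_{n}}(Y_{j})
+2\sum_{1\le j<j'\le n} Cov_{\mathcal{P}_{n}}(Y_{j},Y_{j'})\Bigr\}
\]
and then invokes Corollary~\ref{covEtVarYj} (hence Lemma~\ref{VarCovYj}) to bound the variances by $\mathcal{O}(1)$ and the covariances by $\mathcal{O}(n^{-1/2})$, yielding $o(1)$ overall. You instead exploit the almost-sure constraint $\Sigma_{1,n}=na$ under $\mathcal{P}_{n}$ (which the paper itself uses elsewhere, e.g.\ in Section~\ref{firstCalculus}) to observe that $V_{1,n}$ is $\mathcal{P}_{n}$-a.s.\ equal to the constant $a-\frac{1}{n}\sum_{j}\mathbb{E}_{\mathcal{P}_{n}}[Y_{j}]$, and then only need the first-moment estimate of Lemma~\ref{momentsYj} together with the defining relation $\overline{m}_{1,n}(\theta_{n}^{a})=a$.

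What each approach buys: your argument avoids the second-moment machinery (Lemma~\ref{VarCovYj} and Corollary~\ref{covEtVarYj}) entirely for this lemma, and in fact gives the sharper bound $\mathbb{E}_{\mathcal{P}_{n}}[V_{1,n}^{2}]=\mathcal{O}(1/n)$ rather than the $\mathcal{O}(1/\sqrt{n})$ implicit in the paper's computation. The paper's variance/covariance decomposition, on the other hand, is the template reused in the proof of Lemma~\ref{maxVi} for the general $V_{i+1,n}$, where the a.s.\ constraint no longer pins down the partial sum and your shortcut does not apply; so the covariance estimates are needed there anyway.
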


\begin{proof}
We have that 
\begin{equation}
\mathbb{E}_{\mathcal{P}_{n}} [V_{1,n}^2] = \frac{1}{n^{2}}
 \left\{ \sum\limits_{j=1}^{n} Var_{\mathcal{P}_{n}}(Y_{j}) + 
 2 \sum\limits_{1 \leq j <  j'  \leq n} Cov_{\mathcal{P}_{n}}(Y_{j}, Y_{j'}) \right\}.
\end{equation}

\noindent
Then, we get from Corollary $\ref{covEtVarYj}$ that 
\begin{equation}
\mathbb{E}_{\mathcal{P}_{n}} [V_{1,n}^2] = \frac{1}{n^{2}}
 \left\{  \sum\limits_{j=1}^{n} \left[ s_{j}^{2}(\theta^{a}_{n}) + \mathcal{O} \left( \frac{1}{\sqrt{n}} \right) \right] 
+ n(n-1) \mathcal{O} \left( \frac{1}{\sqrt{n}} \right) \right\}. 
\end{equation}

\noindent
We conclude the proof by Corollary $\ref{mjThetan}$ which implies that 
\begin{equation}
\frac{1}{n^{2}} \sum\limits_{j=1}^{n} \left[ s_{j}^{2}(\theta^{a}_{n}) + \mathcal{O} \left( \frac{1}{\sqrt{n}} \right) \right] = o(1). 
\end{equation}
\end{proof}

\begin{lem}
\label{maxVi} 
We have that
\begin{equation}
\max\limits_{0 \leq i \leq k-1} |V_{i+1,n}| = o_{\mathcal{P}_n}(1).
\end{equation}
\end{lem}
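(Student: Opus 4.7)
The natural approach is Chebyshev's inequality combined with a union bound, using the variance--covariance estimates derived in Corollary $\ref{covEtVarYj}$. Since $\mathbb{E}_{\mathcal{P}_n}[Z_j]=0$, one has $\mathbb{E}_{\mathcal{P}_n}[V_{i+1,n}]=0$ and hence
\begin{equation*}
\mathbb{E}_{\mathcal{P}_n}\bigl[V_{i+1,n}^2\bigr] = \frac{1}{(n-i)^2}\Bigl[\sum_{j=i+1}^n Var_{\mathcal{P}_n}(Y_j) + 2\!\!\sum_{i+1\leq j<j'\leq n}\!\!Cov_{\mathcal{P}_n}(Y_j,Y_{j'})\Bigr].
\end{equation*}
Corollary $\ref{covEtVarYj}$ together with Corollary $\ref{mjThetan}$ and assumption $(\mathcal{C}v)$ yields $Var_{\mathcal{P}_n}(Y_j)=O(1)$ and $|Cov_{\mathcal{P}_n}(Y_j,Y_{j'})|=O(1/\sqrt n)$, uniformly in $j,j'$. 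Substituting, $\mathbb{E}_{\mathcal{P}_n}[V_{i+1,n}^2]\leq C/(n-i)+C/\sqrt n$ for some absolute constant $C>0$.

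Chebyshev's inequality and a union bound over $i=0,\dots,k-1$ then give
\begin{equation*}
\mathcal{P}_n\Bigl(\max_{0\leq i\leq k-1}|V_{i+1,n}|>\epsilon\Bigr) \leq \frac{C}{\epsilon^2}\Bigl(\log\frac{n}{n-k+1} + \frac{k}{\sqrt n}\Bigr),
\end{equation*}
and the right-hand side is $o(1)$ whenever $k/\sqrt n\to 0$ and $k/(n-k)\to 0$. Both conditions are satisfied in the ``small-$k$'' regime $k=o(n^\rho)$, $\rho<1/2$, considered in the paper, and the claim follows there.

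The main obstacle is the ``large-$k$'' regime $k/n\to 1$ that is also treated in the paper: in that case neither $\log[n/(n-k+1)]$ nor $k/\sqrt n$ is small, so the naive union bound fails. To cover that range I would exploit the algebraic identity $(n-i)V_{i+1,n}=nV_{1,n}-W_i$ with $W_i:=\sum_{j=1}^i Z_j$; Lemma $\ref{V1n}$ already controls the piece $V_{1,n}$, and the remaining task is a uniform-in-$i$ bound on $|W_i|/(n-i)$. The cleanest route would be a Doob-type $L^2$ maximal inequality applied to the $\mathcal{P}_n$-martingale $W_i-\sum_{\ell=1}^i(\mathbb{E}_{\mathcal{P}_n}[Y_\ell\mid Y_1^{\ell-1}]-\mathbb{E}_{\mathcal{P}_n}[Y_\ell])$, coupled with a quantitative comparison between $\mathbb{E}_{\mathcal{P}_n}[Y_\ell\mid Y_1^{\ell-1}]$ and its Edgeworth-predicted surrogate $m_\ell(t_{\ell-1,n})$; I expect that comparison to be the main technical effort.
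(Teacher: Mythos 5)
Your Chebyshev-plus-union-bound argument correctly establishes the lemma in the small-$k$ regime $k = o(n^\rho)$, $\rho < 1/2$, and you rightly flag that this route collapses when $k/n \to 1$. Unfortunately the lemma must also hold in the large-$k$ regime of Theorem~\ref{largekTheorem} (where $n-k$ is only polylogarithmic), because it feeds into Theorem~\ref{maxTi}, which is invoked in both main theorems; so as written your proof is incomplete.

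Your proposed repair for large $k$ has a concrete obstruction. With the identity $V_{i+1,n} = \dfrac{nV_{1,n} - W_i}{n-i}$, the piece $\dfrac{nV_{1,n}}{n-i}$ is governed only by what Lemma~\ref{V1n} and Corollary~\ref{covEtVarYj} yield, namely $\mathbb{E}_{\mathcal{P}_n}[V_{1,n}^2] = \mathcal{O}(1/\sqrt n)$, hence $V_{1,n} = \mathcal{O}_{\mathcal{P}_n}(n^{-1/4})$ and $\dfrac{nV_{1,n}}{n-i} = \mathcal{O}_{\mathcal{P}_n}\bigl(n^{3/4}/(n-i)\bigr)$, which diverges when $n - i \sim (\log n)^\tau$. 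So the decomposition does not split into two separately small pieces; any workable martingale/Doob version would have to exploit cancellation between $nV_{1,n}$ and $W_i$ together with a uniform control of the conditional-expectation drift $\sum_{\ell\leq i}\bigl(\mathbb{E}_{\mathcal{P}_n}[Y_\ell\mid Y_1^{\ell-1}]-\mathbb{E}_{\mathcal{P}_n}[Y_\ell]\bigr)$, neither of which you carry out. That unfinished part is precisely the gap.

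The paper takes a different, more elementary route: it reproves Kolmogorov's maximal inequality from scratch for dependent but asymptotically uncorrelated variables. Writing $A_{i,n}$ for the first-exceedance event $\{|V_{i+1,n}| \geq \delta\}\cap\bigcap_{j<i}\{|V_{j+1,n}|<\delta\}$ and $A_n = \bigcup_i A_{i,n}$, one expands $V_{1,n}^2 = \bigl((V_{1,n} - V_{i+1,n}) + V_{i+1,n}\bigr)^2$ and obtains
\begin{equation*}
\mathbb{E}_{\mathcal{P}_n}[V_{1,n}^2] \;\geq\; 2\sum_{i=0}^{k-1}\int_{A_{i,n}} (V_{1,n} - V_{i+1,n})V_{i+1,n}\,d\mathcal{P}_n \;+\; \delta^2\,\mathcal{P}_n(A_n).
\end{equation*}
In the independent case the cross term would vanish exactly; here the paper shows it is $o(1)$ by Cauchy--Schwarz together with the asymptotic uncorrelation of the $Z_j$ (Corollary~\ref{covEtVarYj}) and the uniform moment bounds (Corollary~\ref{mjThetan}). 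Since $\mathbb{E}_{\mathcal{P}_n}[V_{1,n}^2] = o(1)$ by Lemma~\ref{V1n}, this yields $\mathcal{P}_n(A_n) = o(1)$ for \emph{any} $k \leq n$, with no union bound and hence no restriction on $k$. That dependence-robust maximal-inequality idea is what your proposal is missing.
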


\begin{proof}
We follow the lines of Kolmogorov's maximal inequality proof. Let $n \geq 1$ and $i \in \left\{ 0, ...,  k - 1 \right\}$. For any $\delta>0$, set
\begin{equation}
A_{i,n} := \left\{|V_{i+1,n}| \geq \delta  \right\} \bigcap \left( \bigcap\limits_{j=0}^{i-1} \left\{ |V_{j+1,n}| < \delta \right\} 
\right),
\end{equation}

\noindent 
and 
\begin{equation}
A_{n} := \left\{ \max\limits_{0 \leq i \leq k-1} |V_{i+1,n}| \geq \delta  \right\} =
\bigcup_{i=0}^{k-1} A_{i,n}. 
\end{equation}

\noindent
Since the $(A_{i,n})_{0 \leq i \leq k-1}$ are non-overlapping, we have that 
\begin{align}
\mathbb{E}_{\mathcal{P}_{n}} [V_{1,n}^{2}] & \geq \sum_{i=0}^{k-1} \int\limits_{A_{i,n}} V_{1,n}^{2} d \mathcal{P}_{n} \\
&= \sum_{i=0}^{k-1} \int\limits_{A_{i,n}} \left\{ (V_{1,n} - V_{i+1,n}) + V_{i+1,n}  \right\}^{2} d\mathcal{P}_{n} \\
& \geq  2  \sum_{i=0}^{k-1} \int\limits_{A_{i,n}} (V_{1,n} - V_{i+1,n})V_{i+1,n} d\mathcal{P}_{n} + \sum_{i=0}^{k-1} \int\limits_{A_{i,n}} V_{i+1,n}^{2} d\mathcal{P}_{n} \\
& \geq 2  \sum_{i=0}^{k-1} \int\limits_{A_{i,n}} (V_{1,n} - V_{i+1,n})V_{i+1,n} d\mathcal{P}_{n} + \delta^{2} \mathcal{P}_{n} (A_{n}). 
\end{align}

\noindent
By Lemma $\ref{V1n}$, it is enough to prove that 
\begin{equation}
\label{analogueKolmo}
\sum_{i=0}^{k-1} \int\limits_{A_{i,n}} (V_{1,n}-V_{i+1,n})V_{i+1,n} d\mathcal{P}_{n} = o(1) . 
\end{equation}

\noindent
In the proof of Kolmogorov, the corresponding term is equal to 0, by independence of the involved random variables. Similarly $(\ref{analogueKolmo})$ will follow from Corollary 
$\ref{VarCovYj}$, which states that the $(Z_{j})$ are asymptotically uncorrelated. Indeed, we have that    
\begin{equation}
\label{eachSum}
\sum_{i=0}^{k-1} \int\limits_{A_{i,n}} (V_{1,n}-V_{i+1,n})V_{i+1,n} d\mathcal{P}_{n} =
\sum_{i=0}^{k-1} \mathbb{E}_{\mathcal{P}_{n}} \left[\mathbf{1}_{A_{i,n}} V_{1,n}V_{i+1,n}
\right] - \sum_{i=0}^{k-1} \mathbb{E}_{\mathcal{P}_{n}}\left[ \mathbf{1}_{A_{i,n}} V_{1,n}^{2} \right]. 
\end{equation}

\noindent 
Then, it is enough to prove that each sum in the right-hand side of $(\ref{eachSum})$ is a 
$o(1)$. We get readily that 
\begin{equation}
\label{V1nVin}
\mathbb{E}_{\mathcal{P}_{n}} \left[\mathbf{1}_{A_{i,n}} V_{1,n}V_{i+1,n} \right] = 
\frac{1}{n(n-i)}\left\{\sum_{j=i+1}^{n}\mathbb{E}_{\mathcal{P}_{n}}\left[\mathbf{1}_{A_{i,n}} Z_{j}^{2} \right] + \sum_{\substack{1 \leq j \leq n \\ i+1 \leq j' \leq n \\ j \neq j'}} 
\mathbb{E}_{\mathcal{P}_{n}}[\mathbf{1}_{A_{i,n}}Z_{j}Z_{j'}]\right\}
\end{equation}

\noindent
and 
\begin{equation}
\label{VinCarre}
\mathbb{E}_{\mathcal{P}_{n}} \left[\mathbf{1}_{A_{i,n}} V_{i+1,n}^{2} \right] = 
\frac{1}{(n-i)^{2}}\left\{\sum_{j=i+1}^{n}\mathbb{E}_{\mathcal{P}_{n}}\left[\mathbf{1}_{A_{i,n}} Z_{j}^{2} \right] + \sum_{\substack{i+1 \leq j,j' \leq n \\ j \neq j'}} 
\mathbb{E}_{\mathcal{P}_{n}}[\mathbf{1}_{A_{i,n}}Z_{j}Z_{j'}] \right\}. 
\end{equation}

\noindent
Now, the Cauchy-Schwarz inequality applied twice, first in $\mathcal{L}^{2}$ and then in 
$\mathbb{R}^{k}$, implies that 
\begin{align}
\sum_{i=0}^{k-1} \frac{1}{n(n-i)} \sum_{j=i+1}^{n}\mathbb{E}_{\mathcal{P}_{n}}\left[\mathbf{1}_{A_{i,n}} Z_{j}^{2} \right] &\leq 
\frac{1}{n}  \sum\limits_{i=0}^{k-1} \mathcal{P}_{n}(A_{i,n})^{1/2} 
\left( \frac{\sum\limits_{j=i+1}^{n} \mathbb{E}_{\mathcal{P}_{n}}\left[ Z_{j}^{4}\right]^{1/2}}{n-i} \right)  \\
\label{CSRk} & \leq 
\frac{1}{n} \left\{ \sum\limits_{i=0}^{k-1} \mathcal{P}_{n}(A_{i,n}) \right\}^{1/2} 
\left\{ \sum_{i=0}^{k-1} \left( \frac{\sum\limits_{j=i+1}^{n} \mathbb{E}_{\mathcal{P}_{n}}\left[ Z_{j}^{4}\right]^{1/2}}{n-i}\right)^{2} \right\}^{1/2}. 
\end{align}

\noindent
Then, $\left[ \sum\limits_{i=0}^{k-1} \mathcal{P}_{n}(A_{i,n}) \right]^{1/2} = 
\mathcal{P}_{n}(A_{n})^{1/2} \leq 1$ and we obtain from Corollary $\ref{VarCovYj}$ and Fact
$\ref{mjThetan}$ that, for all $i \in \left\{0, ..., k-1 \right\}$, 
\begin{equation}
\label{bornitudeCS}
\left( \frac{\sum\limits_{j=i+1}^{n} \mathbb{E}_{\mathcal{P}_{n}}\left[ Z_{j}^{4}\right]^{1/2}}{n-i}\right)^{2} = \left( \frac{\sum\limits_{j=i+1}^{n} \left\{ \mu_{j}^{4}(\theta_{n}^{a}) 
+ \mathcal{O}\left( \frac{1}{n} \right) \right\}^{1/2} }{n-i}\right)^{2} = \mathcal{O}(1). 
\end{equation}

\noindent
Finally, we deduce from $(\ref{CSRk})$ and $(\ref{bornitudeCS})$ that 
\begin{equation}
\sum_{i=0}^{k-1} \frac{1}{n(n-i)} \sum_{j=i+1}^{n}\mathbb{E}_{\mathcal{P}_{n}}\left[\mathbf{1}_{A_{i,n}} Z_{j}^{2} \right] = \frac{1}{n} \left\{ k \mathcal{O}(1) \right\}^{1/2} = o(1). 
\end{equation}

\noindent\\
We obtain similarly that 
\begin{align}
\sum_{i=0}^{k-1} \frac{1}{(n-i)^{2}} \sum_{j=i+1}^{n}\mathbb{E}_{\mathcal{P}_{n}}\left[\mathbf{1}_{A_{i,n}} Z_{j}^{2} \right] &\leq \mathcal{P}_{n}(A_{n})^{1/2}
\left\{ \sum\limits_{i=0}^{k-1} \frac{1}{(n-i)^{2}} \left( \frac{\sum\limits_{j=i+1}^{n} \mathbb{E}_{\mathcal{P}_{n}}\left[ Z_{j}^{4}\right]^{1/2}}{n-i}\right)^{2} \right\}^{1/2} \\
&= \mathcal{O}(1) \left\{ \sum\limits_{i=0}^{k-1} \frac{1}{(n-i)^{2}} \right\}^{1/2} = o(1). 
\end{align}

\noindent
To conclude, we consider the sums involving $\mathbb{E}_{\mathcal{P}_{n}}[\mathbf{1}_{A_{i,n}}Z_{j}Z_{j'}]$, for $j \neq j'$, in $(\ref{V1nVin})$ and $(\ref{VinCarre})$. The Cauchy-Scwarz inequality brings terms of the form $\mathbb{E}_{\mathcal{P}_{n}}[Z_{j}^{2}Z_{j'}^{2}]$. Clearly, $Z_{j}^{2}$ and $Z_{j'}^{2}$ are similarly asymptotically uncorrelated and thereby, we obtain analogously that 
\begin{equation*}
\min \left\{ \sum_{i=0}^{k-1} \frac{1}{n(n-i)}
\sum_{\substack{1 \leq j \leq n \\ i+1 \leq j' \leq n \\ j \neq j'}} 
\mathbb{E}_{\mathcal{P}_{n}}[\mathbf{1}_{A_{i,n}}Z_{j}Z_{j'}] \quad ; \quad 
\sum_{i=0}^{k-1} \frac{1}{(n-i)^{2}}\sum_{\substack{i+1 \leq j,j' \leq n \\ j \neq j'}} 
\mathbb{E}_{\mathcal{P}_{n}}[\mathbf{1}_{A_{i,n}}Z_{j}Z_{j'}] \right\} = o(1), 
\end{equation*}

\noindent
which ends the proof. 
\end{proof}

\bigskip

\begin{theo}
\label{maxTi}
We have that
\begin{equation}
\label{tiEqua}
\max\limits_{0 \leq i \leq k-1} |t_{i,n}| = \mathcal{O}_{\mathcal{P}_{n}}(1). 
\end{equation}
\end{theo}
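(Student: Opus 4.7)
The strategy is to express $t_{i,n}$ through the inverse of $\overline{m}_{i+1,n}$ applied to the empirical mean
\[
W_{i+1,n} := \frac{1}{n-i} \sum_{j=i+1}^{n} Y_j,
\]
show that $\max_{0 \le i \le k-1} |W_{i+1,n}|$ is $\mathcal{O}_{\mathcal{P}_n}(1)$, and then invoke Assumption $(\mathcal{U}f)$ to pass to a uniform bound on $t_{i,n}$ that does not depend on $i$ or $n$.

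First, I would decompose
\[
W_{i+1,n} = \overline{m}_{i+1,n}(\theta^{a}_{n}) + \varepsilon_{i,n} + V_{i+1,n},
\]
where $\varepsilon_{i,n} := \frac{1}{n-i}\sum_{j=i+1}^{n}(\mathbb{E}_{\mathcal{P}_n}[Y_j] - m_j(\theta^{a}_{n}))$ and $V_{i+1,n}$ is defined as in the preceding subsection. Lemma \ref{momentsYj} gives $\max_{i} |\varepsilon_{i,n}| = \mathcal{O}(n^{-1/2})$ by averaging its uniform-in-$j$ bound, while Lemma \ref{maxVi} gives $\max_i |V_{i+1,n}| = o_{\mathcal{P}_n}(1)$. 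Finally, Corollary \ref{mjThetan} asserts that $\sup_{n \geq 1} \sup_{j \geq 1} |m_j(\theta^{a}_n)| < \infty$, hence $\sup_{n,i} |\overline{m}_{i+1,n}(\theta^{a}_n)| < \infty$. Combining the three, there exists $M > 0$ such that for every $\varepsilon > 0$, for all $n$ large enough,
\[
\mathcal{P}_n\!\left(\max_{0 \le i \le k-1} |W_{i+1,n}| \leq M \right) \geq 1 - \varepsilon.
\]

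To convert this into a bound on $t_{i,n}$, I would use $(\mathcal{U}f)$: since $f_-(\theta) \le m_j(\theta) \le f_+(\theta)$ for every $j$ and every $\theta \in \Theta$, averaging over $j \in \{i+1,\dots,n\}$ yields
\[
f_-(t_{i,n}) \leq \overline{m}_{i+1,n}(t_{i,n}) = W_{i+1,n} \leq f_+(t_{i,n}).
\]
Since $f_\pm$ satisfy $(\mathcal{H})$, they are strictly increasing homeomorphisms, so $f_\pm^{-1}$ are well defined and monotone. On the event $\{\max_i |W_{i+1,n}| \le M\}$, the above sandwich gives
\[
f_{+}^{-1}(-M) \leq t_{i,n} \leq f_{-}^{-1}(M)
\]
for every $i \in \{0,\dots,k-1\}$, provided $\pm M$ lie in the ranges of $f_\pm$ respectively (which can be arranged by enlarging $M$ if necessary, since $\overline{m}_{i+1,n}(\theta^a_n)$ lies in the common range of these monotone functions). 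This yields $\max_{0 \le i \le k-1} |t_{i,n}| = \mathcal{O}_{\mathcal{P}_n}(1)$.

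The only delicate point, and the one I expect to be the main obstacle, is the uniformity in $i$ throughout: the residual $\varepsilon_{i,n}$ must be controlled uniformly in $i$ (which follows directly from the uniform-in-$j$ rate in Lemma \ref{momentsYj}), and $V_{i+1,n}$ must be controlled uniformly in $i$ (which is exactly the content of Lemma \ref{maxVi}). Once these uniformities are in hand, the monotone inversion argument via $(\mathcal{U}f)$ is immediate because the bounding functions $f_\pm$ do not depend on $i$ or on $n$.
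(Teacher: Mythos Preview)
Your proposal is correct and follows essentially the same route as the paper: decompose $\overline{m}_{i+1,n}(t_{i,n})=W_{i+1,n}$ into $\overline{m}_{i+1,n}(\theta^a_n)+\varepsilon_{i,n}+V_{i+1,n}$, control the three pieces via Corollary~\ref{mjThetan}, Lemma~\ref{momentsYj} and Lemma~\ref{maxVi} respectively, and then invert through the sandwich $f_-\le \overline{m}_{i+1,n}\le f_+$ from $(\mathcal{U}f)$. The only cosmetic difference is that the paper applies $f_\pm^{-1}$ directly to the random value $\overline{m}_{i+1,n}(t_{i,n})\in(A,B)$ rather than to a fixed threshold $M$, which sidesteps your parenthetical caveat about $\pm M$ lying in the range of $f_\pm$.
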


\begin{proof}
The triangle inequality implies that for any $n \geq 1$,  
\begin{equation}
\label{inegTriang}
\max\limits_{0 \leq i \leq k-1} \left| \overline{m}_{i+1,n}(t_{i,n}) \right| \leq 
\max\limits_{0 \leq i \leq k-1} \left| V_{i+1,n} \right| +
\max\limits_{0 \leq i \leq k-1} \left| \left(\frac{1}{n-i} \sum_{j=i+1}^{n} \mathbb{E}[Y_{j}] \right) - \overline{m}_{i+1,n}(\theta_{n}^{a})\right| + 
\max\limits_{0 \leq i \leq k-1} \left| \overline{m}_{i+1,n}(\theta_{n}^{a}) \right|. 
\end{equation}

\noindent
We get from Lemma $\ref{maxVi}$ and assumption (E) that 
\begin{equation}
\label{ViPetitTau}
\max\limits_{0 \leq i \leq k-1} |V_{i+1,n}| = o_{\mathcal{P}_n}(1). 
\end{equation}

\noindent
Then, Lemma $\ref{momentsYj}$ implies that 
\begin{equation}
\label{espYjpetitTau}
\max\limits_{0 \leq i \leq k-1} \left| \left(\frac{1}{n-i} \sum_{j=i+1}^{n} \mathbb{E}[Y_{j}] \right) - \overline{m}_{i+1,n}(\theta_{n}^{a})\right| \leq
\max\limits_{0 \leq i \leq k-1} \left\{ 
\frac{1}{n-i} \sum_{j=i+1}^{n} \left| \mathbb{E}[Y_{j}] - m_{j}(\theta_{n}^{a}) \right| \right\} = \mathcal{O}\left(\frac{1}{n}\right). 
\end{equation}

\noindent
Now, Fact $\ref{mjThetan}$ implies that 
\begin{equation}
\label{thetanPetitTau}
\max\limits_{0 \leq i \leq k-1} \left| \overline{m}_{i+1,n}(\theta_{n}^{a}) \right| = \mathcal{O}(1). 
\end{equation}

\noindent
Combining $(\ref{inegTriang})$, $(\ref{ViPetitTau})$, $(\ref{espYjpetitTau})$, and 
$(\ref{thetanPetitTau})$, we obtain that 
\begin{equation}
\label{miGrandTau}
\max\limits_{0 \leq i \leq k-1} \left| \overline{m}_{i+1,n}(t_{i,n}) \right| = 
\mathcal{O}_{\mathcal{P}_n}(1). 
\end{equation}

\noindent
Now, $(\mathcal{H}\kappa)$ implies that for all $i = 0, ..., k-1$, $\overline{m}_{i+1,n}$ is a homeomorphism from $\Theta$ to $\mathcal{S}_{X}$. Then, we get from $(\mathcal{U}f)$ that for all $s \in \mathcal{S}_{X}$, 
\begin{equation}
(f_{+})^{-1}(s) \leq (\overline{m}_{i+1,n})^{-1}(s) \leq (f_{-})^{-1}(s).  
\end{equation} 

\noindent
We deduce that $\mathcal{P}_{n}$ - a.s., 
\begin{equation}
(f_{+})^{-1}(\overline{m}_{i+1,n}(t_{i,n})) \leq t_{i,n} \leq (f_{-})^{-1}(\overline{m}_{i+1,n}(t_{i,n})),   
\end{equation}  

\noindent
which combined to $(\ref{miGrandTau})$ concludes the proof. 
\end{proof}

\subsection{The max of the trajectories}

\textit{Throughout this Section 3.5, all the tilted densities considered pertain to 
$\theta = \theta^{a}_{n}$, defined by} 
\begin{equation}
\overline{m}_{1,n}(\theta^{a}_{n}) = a.
\end{equation}

\bigskip

\begin{lem}\label{maxYj}
We have that
\begin{equation}
\max\limits_{1 \leq j \leq n} |Y_{j}| = \mathcal{O}_{\mathcal{P}_{n}} (\log n).
\end{equation}
\end{lem}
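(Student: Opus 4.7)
The plan is to combine three ingredients: the pointwise density estimate from Section 3.3, a Chernoff/Markov bound for tilted densities, and a union bound over $j$.

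First, I would observe that equation $(\ref{densCondThetan})$ gives, uniformly in $j\in\{1,\dots,n\}$ and in $y\in\mathbb{R}$,
\begin{equation*}
p_{Y_j}(y) \;=\; p(\widetilde{X}_j = y \mid \widetilde{S}_{1,n}=na) \;\leq\; C\,\widetilde{p}_j^{\,\theta^a_n}(y)
\end{equation*}
for some absolute constant $C$ and all $n$ large enough. Thus controlling the tails of the $Y_j$ reduces to controlling the tails of the tilted densities $\widetilde{p}_j^{\,\theta^a_n}$. This is the crucial reduction the conditional-density estimate gives us.

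Next, I would exploit that $\Theta=(\alpha,\beta)$ is open and that $\{\theta^a_n:n\geq 1\}\subset K_a$ with $K_a$ compact in $\Theta$ (the fact stated just before Corollary $\ref{mjThetan}$). Consequently there is a fixed $\theta_0>0$ such that $\theta^a_n\pm\theta_0\in\Theta$ for every $n$, and such that the shifted set $\{\theta^a_n\pm\theta_0\}$ again lies in a compact subset of $\Theta$. Then by Corollary $\ref{mjThetan}$ applied to this enlarged compact, both ratios $\Phi_j(\theta^a_n+\theta_0)/\Phi_j(\theta^a_n)$ and $\Phi_j(\theta^a_n-\theta_0)/\Phi_j(\theta^a_n)$ are bounded uniformly in $j\geq 1$ and $n\geq 1$ by some constant $M$. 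The Chernoff bound now gives, for any $A>0$,
\begin{equation*}
\int_{A\log n}^{\infty}\widetilde{p}_j^{\,\theta^a_n}(y)\,dy \;\leq\; e^{-\theta_0 A\log n}\,\frac{\Phi_j(\theta^a_n+\theta_0)}{\Phi_j(\theta^a_n)}\;\leq\; M\,n^{-\theta_0 A},
\end{equation*}
and symmetrically for the left tail using $\theta^a_n-\theta_0$.

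Finally, I would conclude via a union bound. Combining the two steps above, there exists $C'>0$ such that for all $n$ large enough and all $j$,
\begin{equation*}
\mathcal{P}_n(|Y_j|>A\log n) \;\leq\; C'\, n^{-\theta_0 A}.
\end{equation*}
Hence
\begin{equation*}
\mathcal{P}_n\!\left(\max_{1\leq j\leq n}|Y_j|>A\log n\right) \;\leq\; \sum_{j=1}^{n}\mathcal{P}_n(|Y_j|>A\log n)\;\leq\; C'\, n^{\,1-\theta_0 A},
\end{equation*}
which can be made smaller than any prescribed $\epsilon>0$ by choosing $A$ large enough (specifically $A>2/\theta_0$ already suffices). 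This is exactly the definition of $\mathcal{O}_{\mathcal{P}_n}(\log n)$.

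The main obstacle is not any single calculation but making sure the constants are genuinely uniform in both $j$ and $n$: one must verify that the implicit constant in $(\ref{densCondThetan})$ does not blow up with $j$, and that the compact set $K_a$ containing all the tilts $\theta^a_n$ leaves room for a uniform enlargement by $\pm\theta_0$ inside $\Theta$. Both facts are by now in hand from the standing assumptions $(\mathcal{M}gf)$, $(\mathcal{H}\kappa)$, $(\mathcal{C}v)$, $(\mathcal{AM}6)$ and Corollary $\ref{mjThetan}$, so the argument proceeds smoothly.
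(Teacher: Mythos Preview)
Your proposal is correct and follows essentially the same route as the paper's own proof: both bound the conditional density by a constant times the tilted density via $(\ref{densCondThetan})$, apply a Chernoff/Markov bound using that $\theta^a_n\pm\lambda$ stays in a compact subset of $\Theta$, and finish with a union bound over $j$. Your conclusion is slightly more direct (choosing $A$ large at the outset) whereas the paper argues that $\mathcal{P}_n(M_n\geq s_n)\to 0$ whenever $s_n/\log n\to\infty$ and then invokes a general fact to deduce $\mathcal{O}_{\mathcal{P}_n}(\log n)$, but this is a cosmetic difference.
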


\begin{proof}
For any $n \geq 1$, set $M_{n} := \max\limits_{1 \leq j \leq n} |Y_{j}|$. 
For all $s>0$, we have that
\begin{align}
\mathcal{P}_{n} \left(M_{n} \geq s \right) &\leq 
\sum\limits_{j=1}^{n} \mathcal{P}_{n}(Y_j \leq - s)  +  \mathcal{P}_{n}(Y_j \geq s)\\
& = 
\sum\limits_{j=1}^{n}\int_{-\infty}^{-s} P(\widetilde{X}_{j} =x | \widetilde{S}_{1,n} = na)dx +  \int_{s}^{\infty} P(\widetilde{X}_{j}=x | \widetilde{S}_{1,n} = na)dx.
\end{align}

\noindent
Now, we recall from $(\ref{densCondThetan})$ that  
\begin{equation}
p(\widetilde{X}_{j}=x|\widetilde{S}_{1,n}=na) = \widetilde{p}_{j}(x)
\left( 1+\mathcal{O}\left(\frac{1}{n}\right) \right)
\left\{ \exp\left(-\frac{\gamma_{n}^{j}(x)^{2}}{2}\right) + \mathcal{O}\left(\frac{1}{\sqrt{n}}\right) \right\} = \widetilde{p}_{j}(x) \mathcal{O}(1). 
\end{equation}

\noindent
Consequently, there exists an absolute constant $C > 0$ s.t. for all $n \geq 1$, 
\begin{equation}
\mathcal{P}_{n} \left(M_{n} \geq s \right) \leq C \left\{ \sum\limits_{j=1}^{n} 
P\left(\widetilde{X}_{j} \leq -s \right) + P\left(\widetilde{X}_{j} \geq s \right) \right\}.
\end{equation}

\noindent
We get from Markov's inequality that for any $\lambda > 0$, 
\begin{equation}
P\left( \widetilde{X}_{j} \leq -s \right) = 
P\left( \exp(-\lambda \widetilde{X}_{j}) \geq \exp(\lambda s) \right) \leq 
\mathbb{E}\left[ \exp(-\lambda \widetilde{X}_{j}) \right]\exp(-\lambda s) 
\end{equation}

\noindent
and 
\begin{equation}
P\left( \widetilde{X}_{j} \geq s \right) \leq 
\mathbb{E}\left[ \exp(\lambda \widetilde{X}_{j}) \right] \exp(-\lambda s).  
\end{equation}

\noindent
Then, for any $\lambda \neq 0$,
\begin{equation}
\mathbb{E}\left[ \exp( \lambda \widetilde{X}_{j}) \right] =
\int\displaystyle \exp(\lambda x) \left[ \frac{\exp(\theta_{n}^{a} x) p_{j}(x)}
{\Phi_{j}(\theta_{n}^{a})} dx  \right] =
\frac{\Phi_{j}(\theta_{n}^{a} + \lambda)}{\Phi_{j}(\theta_{n}^{a})}. 
\end{equation}  

\noindent
Therefore, 
\begin{equation}
\mathcal{P}_{n} \left(M_{n} \geq s \right) \leq C \left\{ \sum\limits_{j=1}^{n} 
\frac{\Phi_{j}(\theta_{n}^{a} - \lambda)}{\Phi_{j}(\theta_{n}^{a})} + 
\frac{\Phi_{j}(\theta_{n}^{a} + \lambda)}{\Phi_{j}(\theta_{n}^{a})} \right\} \exp(-\lambda s).
\end{equation}

\noindent
Since the sequence $(\theta_{n}^{a})_{n \geq 1}$ is bounded, we can find $\lambda > 0$ s.t. each of the sequences $(\theta_{n}^{a} - \lambda)_{n \geq 1}$ and 
$(\theta_{n}^{a} + \lambda)_{n \geq 1}$ is included in a compact subset of $\Theta$. Therefore, we deduce that there exists an absolute constant $D$ s.t. 
\begin{equation}
\sup\limits_{n \geq 1} \enskip \sup\limits_{j \geq 1} \enskip \max  
\left\{ \frac{\Phi_{j}(\theta^{a}_{n} - \lambda)}{\Phi_{j}(\theta^{a}_{n})} \enskip ; \enskip
\frac{\Phi_{j}(\theta^{a}_{n} + \lambda)}{\Phi_{j}(\theta^{a}_{n})} \right\} \leq D. 
\end{equation}

\noindent
Therefore, 
\begin{equation}
\mathcal{P}_{n} \left(M_{n} \geq s \right) \leq CD n \exp(-\lambda s) = 
CD \exp\left( \log n - \lambda s\right). 
\end{equation}

\noindent
Consequently, for all sequence $(s_{n})_{n \geq 1}$ s.t. 
$\frac{s_{n}}{\log n} \rightarrow \infty$ as $n \rightarrow \infty$, we have that  
\begin{equation}
\label{covMn}
\mathcal{P}_{n} \left(M_{n} \geq s_{n} \right) \rightarrow 0 \enskip \textrm{as} \enskip 
n \rightarrow \infty. 
\end{equation}

\noindent
Set $Z_{n} := \frac{M_{n}}{\log n}$. For any sequence $(a_n)_{n \geq 1}$ s.t. 
$a_{n} \rightarrow \infty$ as $n \rightarrow \infty$, we have that 
\begin{equation}
\mathcal{P}_{n} \left(Z_{n} \geq a_{n} \right) = 
\mathcal{P}_{n} \left(M_{n} \geq s_{n} \right) \enskip \textrm{where} \enskip 
s_{n} := a_{n} \log n, \enskip \textrm{so that} \enskip 
\frac{s_{n}}{\log n} \rightarrow \infty \enskip \textrm{as} \enskip n \rightarrow \infty. 
\end{equation}

\noindent
Finally, we conclude the proof by applying the following Fact, since we get from $(\ref{covMn})$ that
\begin{equation}
\mathcal{P}_{n} \left(Z_{n} \geq a_{n} \right) \rightarrow 0 \enskip \textrm{as} \enskip 
n \rightarrow \infty. 
\end{equation}

\end{proof}

\begin{fact}
For all $n \geq 1$, let $Z_{n} : (\Omega_{n},\mathcal{A}_{n}, \mathcal{P}_{n}) \longrightarrow \mathbb{R}$ be a r.v. Assume that for any sequence $(a_n)_{n \geq 1}$ s.t. 
$a_{n} \rightarrow \infty$ as $n \rightarrow \infty$, we have that  
$\mathcal{P}_{n} (|Z_{n}| \geq a_{n}) \rightarrow 0$ as $n \rightarrow \infty$. Then,  
\begin{equation}
Z_{n} = \mathcal{O}_{\mathcal{P}_{n}}(1). 
\end{equation}
\end{fact}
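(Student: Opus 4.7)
The plan is to argue by contrapositive. Suppose $Z_n$ is \emph{not} $\mathcal{O}_{\mathcal{P}_n}(1)$; we aim to exhibit a deterministic sequence $(a_n)$ with $a_n \to \infty$ such that $\mathcal{P}_n(|Z_n| \geq a_n)$ does not converge to $0$, contradicting the hypothesis.

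First I would carefully negate the definition of $\mathcal{O}_{\mathcal{P}_n}(1)$ given in Section~\ref{landauNotations}. The negation reads: there exists $\epsilon_0 > 0$ such that for every $A \geq 0$ and every $N \in \mathbb{N}$, one can find some index $n \geq N$ with $\mathcal{P}_n(|Z_n| > A) \geq \epsilon_0$. In other words, for every real threshold $A$ there are infinitely many indices $n$ at which the tail mass of $|Z_n|$ beyond $A$ stays at least $\epsilon_0$.

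Next I would extract a subsequence recursively. Set $n_0 = 0$, and for each $k \geq 1$, apply the negated condition with $A = k$ and $N = n_{k-1}+1$ to obtain some $n_k > n_{k-1}$ satisfying
\begin{equation*}
\mathcal{P}_{n_k}\bigl(|Z_{n_k}| > k\bigr) \geq \epsilon_0.
\end{equation*}
Then define the deterministic sequence $(a_n)_{n \geq 1}$ as a piecewise-constant step function: put $a_n := 1$ for $n < n_1$ and, for each $k \geq 1$ and each $n$ with $n_k \leq n < n_{k+1}$, put $a_n := k$. By construction $a_n$ is nondecreasing and, since $n_k \uparrow \infty$, it also tends to $\infty$. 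Moreover, along the subsequence $(n_k)_{k \geq 1}$,
\begin{equation*}
\mathcal{P}_{n_k}\bigl(|Z_{n_k}| \geq a_{n_k}\bigr) = \mathcal{P}_{n_k}\bigl(|Z_{n_k}| \geq k\bigr) \geq \mathcal{P}_{n_k}\bigl(|Z_{n_k}| > k\bigr) \geq \epsilon_0,
\end{equation*}
so $\mathcal{P}_n(|Z_n| \geq a_n) \not\to 0$, contradicting the hypothesis. Hence $Z_n = \mathcal{O}_{\mathcal{P}_n}(1)$.

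There is no real analytical obstacle here; the only point that requires a bit of care is the correct negation of the $\mathcal{O}_{\mathcal{P}_n}$ predicate (in particular preserving the strict inequality and the uniformity in $N$), and the verification that the piecewise definition of $(a_n)$ actually satisfies $a_n \to \infty$ rather than merely $\limsup a_n = \infty$. Both are immediate once the $n_k$ are chosen strictly increasing.
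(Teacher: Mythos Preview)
Your proof is correct and follows essentially the same contrapositive strategy as the paper: negate $\mathcal{O}_{\mathcal{P}_n}(1)$, extract a subsequence along which the tail mass stays bounded below, and manufacture a sequence $a_n\to\infty$ that witnesses failure of the hypothesis. The one noteworthy difference is that you write the \emph{full} negation of the definition (quantifying over all $N$ as well as all $A$), which lets you choose the $n_k$ strictly increasing from the outset; the paper instead writes the weaker statement ``for every $k$ there exists some $n(k)$'' and must then split into the cases where $(n(k))$ is bounded versus unbounded. Your formulation is cleaner and avoids that case distinction, but the underlying idea is identical.
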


\begin{proof}
Suppose that the sequence $(Z_{n})$ is not a $\mathcal{O}_{\mathcal{P}_{n}}(1)$. This means that there exists $\epsilon > 0$ s.t. for all 
$k \in \mathbb{N}$, there exists $n(k) \in \mathbb{N}$ s.t. 
\begin{equation}
\label{nonO1}
\mathcal{P}_{n(k)} (|Z_{n(k)}| \geq k) > \epsilon. 
\end{equation}

\noindent\\
If the sequence $(n(k))_{k}$ is bounded, then there exists a fixed $n_{0} \in \mathbb{N}$ 
and a subsequence $(n(k_{j}))_{j \geq 1}$ such that for all $j \geq 1$, $n(k_{j}) = n_{0}$.
We can clearly assume that $k_{j} \rightarrow \infty$ as $j \rightarrow \infty$, which implies that 
\begin{equation}
\lim\limits_{j \rightarrow \infty} \mathcal{P}_{n(k_{j})} (|Z_{n(k_{j})}| \geq k_{j}) =
\lim\limits_{j \rightarrow \infty} \mathcal{P}_{n_{0}} (|Z_{n_{0}}| \geq k_{j}) = 0, 
\end{equation}

\noindent
which contradicts $(\ref{nonO1})$. 

\noindent\\
If the sequence $(n(k))_k$ is not bounded, then there exists a strictly increasing subsequence $(n(k_{j}))_j$ s.t. $n(k_{j}) \rightarrow \infty$ as $j \rightarrow \infty$. Now, we can define a sequence $(a_{n})$ s.t. for all $j \geq 1$, $a_{n(k_{j})} = k_{j}$. We still can assume that $k_{j} \rightarrow \infty$ as $j \rightarrow \infty$. Therefore, we can assume that $a_{n} \rightarrow \infty$ as $n \rightarrow \infty$, which implies that  
\begin{equation}
\lim\limits_{j \rightarrow \infty} \mathcal{P}_{n(k_{j})} (|Z_{n(k_{j})}| \geq k_{j}) =
\lim\limits_{j \rightarrow \infty} \mathcal{P}_{n(k_{j})} (|Z_{n(k_{j})}| \geq a_{n(k_{j})}) = 0, 
\end{equation}

\noindent
which contradicts $(\ref{nonO1})$.

\end{proof}

\subsection{Taylor expansion}

\begin{lem} \label{devLimLandau}
Let $I$ be an interval of $\mathbb{R}$ containing $0$, of non void interior, and $f : I \longrightarrow \mathbb{R}$ a function of class $C^2$. Let $(U_{n})$ be a sequence of random variables $U_{n} : (\Omega_{n},\mathcal{A}_{n}) \longrightarrow (\mathbb{R}, \mathcal{B}(\mathbb{R}))$ s.t.  
\begin{equation}
U_{n} = o_{\mathcal{P}_{n}}(1). 
\end{equation}

\noindent 
Then, there exists $(B_{n})_{n \geq 1} \in \mathcal{A}_{\rightarrow 1}$ s.t. for any $n \geq 1$,  
\begin{equation}
f(U_{n}) = f(0) +  U_{n}f'(0) + U_{n}^{2} \mathcal{O}_{\mathcal{P}_{n}}(1) 
\quad \textrm{on} \enskip B_{n}. 
\end{equation}

\noindent
Furthermore, if $U_{n}=o_{\mathcal{P}_{n}}(u_n)$, with 
$u_{n} \enskip {\underset{n \infty} {\longrightarrow}} \enskip 0$, then 
\begin{equation}
f(U_{n}) = f(0) + o_{\mathcal{P}_{n}}(u_n) \quad \textrm{on} \enskip B_{n}. 
\end{equation}
\end{lem}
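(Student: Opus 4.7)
The plan is to reduce everything to a standard Taylor expansion with integral (or Lagrange) remainder by localising $U_n$ to a fixed compact neighbourhood of $0$ on which $f''$ is bounded, and then to handle the stochastic rates via the algebra of the Landau symbols $o_{\mathcal{P}_n}$ and $\mathcal{O}_{\mathcal{P}_n}$ already set up in Section~\ref{landauNotations}.

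First I would choose $\eta > 0$ small enough that $[-\eta,\eta] \subset I$; this is possible because $I$ contains $0$ in its interior. Since $f \in C^2$, the quantity $M := \sup_{|x| \leq \eta} |f''(x)|$ is finite. Next, set
\begin{equation*}
B_n := \bigl\{\, |U_n| \leq \eta \,\bigr\} \in \mathcal{A}_n.
\end{equation*}
From $U_n = o_{\mathcal{P}_n}(1)$ one reads off directly $\mathcal{P}_n(B_n) \to 1$, so $(B_n)_{n \geq 1} \in \mathcal{A}_{\to 1}$.

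On $B_n$, Taylor's formula with Lagrange remainder gives the existence of a (measurable) random point $\xi_n$ lying between $0$ and $U_n$ with
\begin{equation*}
f(U_n) = f(0) + U_n f'(0) + \tfrac{1}{2} U_n^{2} f''(\xi_n).
\end{equation*}
On $B_n$ we have $|\xi_n| \leq \eta$, hence $|f''(\xi_n)| \leq M$. In particular the random variable $R_n := \tfrac{1}{2} f''(\xi_n)\mathbf{1}_{B_n}$ is bounded by $M/2$, so a fortiori $R_n = \mathcal{O}_{\mathcal{P}_n}(1)$, and on $B_n$ we may write $f(U_n) = f(0) + U_n f'(0) + U_n^{2}\, \mathcal{O}_{\mathcal{P}_n}(1)$. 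This settles the first assertion.

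For the refined statement, suppose moreover that $U_n = o_{\mathcal{P}_n}(u_n)$ with $u_n \to 0$. Because $u_n \to 0$, this already forces $U_n = o_{\mathcal{P}_n}(1)$, so the previous construction of $B_n$ still applies. For the linear term we simply have $U_n f'(0) = o_{\mathcal{P}_n}(u_n)$ (multiplying an $o_{\mathcal{P}_n}(u_n)$ by a constant). For the quadratic term, from $U_n/u_n = o_{\mathcal{P}_n}(1)$ and $U_n = o_{\mathcal{P}_n}(1)$ we get
\begin{equation*}
\frac{U_n^{2}}{u_n} = U_n \cdot \frac{U_n}{u_n} = o_{\mathcal{P}_n}(1)\cdot o_{\mathcal{P}_n}(1) = o_{\mathcal{P}_n}(1),
\end{equation*}
i.e. $U_n^{2} = o_{\mathcal{P}_n}(u_n)$; combined with the bound $|R_n| \leq M/2$ on $B_n$, we obtain $U_n^{2}\,\mathcal{O}_{\mathcal{P}_n}(1) = o_{\mathcal{P}_n}(u_n)$ on $B_n$, and the second identity follows.

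The only subtle point is the handling of the Landau symbols in this non-standard framework where different $U_n$ live on different spaces $(\Omega_n,\mathcal{A}_n,\mathcal{P}_n)$; but the product/sum rules $o_{\mathcal{P}_n}(1)\cdot o_{\mathcal{P}_n}(1) = o_{\mathcal{P}_n}(1)$ and $o_{\mathcal{P}_n}(u_n)\cdot \mathcal{O}_{\mathcal{P}_n}(1) = o_{\mathcal{P}_n}(u_n)$ follow directly from the $\varepsilon$-$\delta$ definitions in Section~\ref{landauNotations} exactly as in the classical case, which the author has already noted may be used implicitly. No serious obstacle arises: the Taylor remainder is controlled deterministically on $B_n$, and everything outside $B_n$ is absorbed into the "on $B_n$" clause encoded via $\mathcal{A}_{\to 1}$.
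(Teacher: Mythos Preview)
Your proof is correct and follows essentially the same route as the paper: localise via $B_n=\{|U_n|\le\eta\}$, apply Taylor--Lagrange, and control the remainder. The only cosmetic difference is that you bound $f''(\xi_n)$ deterministically by $M=\sup_{[-\eta,\eta]}|f''|$ on $B_n$, whereas the paper argues $f''(C_n)=\mathcal{O}_{\mathcal{P}_n}(1)$ via $C_n\to 0$ in $\mathcal{P}_n$-probability and continuity of $f''$; your version is slightly more direct.
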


\begin{proof}
Let $\epsilon >0$. Let $\delta >0$ s.t. $(-\delta,\delta) \subset I$. Set
\begin{equation*}
B_{n} := \{|U_{n}| < \delta\}. 
\end{equation*}

\noindent
Since $U_{n} = o_{\mathcal{P}_{n}}(1)$, we have that $(B_{n})_{n \geq 1} \in \mathcal{A}_{\rightarrow 1}$. For any $n \geq 1$, $f(U_{n})$ is well defined on $B_{n}$, and the Taylor-Lagrange formula provides a $C_{n}$ with $|C_{n}| \leq |U_{n}|$, s.t. 
\begin{equation}
f(U_{n}) = f(0) + U_{n}f'(0) + \frac{U_{n}^{2}}{2} f '' (C_{n}). 
\end{equation}

\noindent
Now, $C_{n}$ can be obtained from a dichotomy process, initialized with $U_n$. This implies that for all $n$, $C_n$ is a measurable mapping from $(\Omega_{n},\mathcal{A}_{n})$ to $(\mathbb{R}, \mathcal{B}(\mathbb{R}))$, for $C_n$ is the limit of such mappings. Then, as $|C_{n}| \leq |U_{n}|$ and $f''$ is continuous, we have that 
\begin{equation}
C_{n} \enskip {\underset{\mathcal{P}_{n}} {\longrightarrow}} \enskip 0 \Longrightarrow 
f ''(C_{n}) \enskip {\underset{\mathcal{P}_{n}} {\longrightarrow}} \enskip f''(0) \Longrightarrow
f ''(C_{n})=\mathcal{O}_{\mathcal{P}_{n}}(1). 
\end{equation}

\noindent
Furthermore, if $U_{n}=o_{\mathcal{P}_{n}}(u_n)$ with 
$u_{n} \enskip {\underset{n \infty} {\longrightarrow}} \enskip 0$, then 
$\frac{U_{n}^{2}}{2} f '' (C_{n})$ is also a $o_{\mathcal{P}_{n}}(u_n)$.
\end{proof}

\section{Main Results}

\subsection{Theorem with small $k$}
\begin{theo} \label{smallkTheorem}
Suppose that the Assumptions stated in Section 2.6 hold. Assume that 
\begin{equation}
\label{conditionK}
k \longrightarrow \infty \enskip \textrm{as } \enskip n \longrightarrow \infty
\qquad \textrm{and that} \qquad
k=o(n^{\rho}), \quad \textrm{with} \enskip 0 < \rho < 1/2. 
\end{equation}

\noindent
Then, 
\begin{equation}
\label{myDFavecY}
\left\| Q_{nak} - \widetilde{P}_{1}^{k} \right\|_{TV} \enskip {\underset{n \infty} {\longrightarrow}} \enskip 0, 
\end{equation}

\noindent
where $\widetilde{P}_{1}^{k}$ is the joint distribution of independent r.v.'s 
$\left( \widetilde{X_{j}}^{\theta_{n}^{a}} \right)_{1 \leq j \leq k}$.

\end{theo}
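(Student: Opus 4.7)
The plan is to apply Lemma~\ref{lienISdvt} with target $g_k(y_1^k) := \prod_{j=1}^k \widetilde{p}_j^{\theta_n^a}(y_j)$, so that $G_k = \widetilde{P}_1^k$. Starting from the factorization derived in Section~\ref{firstCalculus}, namely
\begin{equation*}
p_k(Y_1^k) \;=\; \prod_{i=0}^{k-1} \pi_i, \qquad \pi_i \;=\; \widetilde{p}_{i+1}^{t_{i,n}}(Y_{i+1}) \cdot \frac{\sigma_{i+1,n}(t_{i,n})}{\sigma_{i+2,n}(t_{i,n})} \cdot \frac{\overline{q}_{i+2,n}(Z_{i+1})}{\overline{q}_{i+1,n}(0)},
\end{equation*}
I take logarithms and split $\log(p_k/g_k) = A_n + B_n + C_n$, where $A_n$ collects the tilting-parameter discrepancies $\log\bigl(\widetilde{p}_{i+1}^{t_{i,n}}/\widetilde{p}_{i+1}^{\theta_n^a}\bigr)(Y_{i+1})$, $B_n$ the variance ratios, and $C_n$ the Edgeworth ratios. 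It will suffice to show that each of these three sums is $o_{\mathcal{P}_n}(1)$ on an event of $\mathcal{P}_n$-probability tending to~$1$.

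For $C_n$, I apply Corollary~\ref{condPetrovTI} with $m=3$ (valid thanks to Theorem~\ref{maxTi}) to obtain
\begin{equation*}
\overline{q}_{i+e,n}(x) \;=\; \mathfrak{n}(x)\bigl(1 + \overline{P}_{3,n}^{(i,e)}(x)\bigr) + \frac{o_{\mathcal{P}_n}(1)}{\sqrt{n-i-e+1}}, \qquad e\in\{1,2\}.
\end{equation*}
Since Lemma~\ref{maxYj} and Fact~\ref{ppteMomentsTin} yield $|Z_{i+1}| = \mathcal{O}_{\mathcal{P}_n}(\log n / \sqrt{n-i-1})$, I expand $\mathfrak{n}(Z_{i+1})/\mathfrak{n}(0) = 1 - Z_{i+1}^2/2 + \cdots$ via Lemma~\ref{devLimLandau}, so that each $\log(\overline{q}_{i+2,n}(Z_{i+1})/\overline{q}_{i+1,n}(0))$ contributes $\mathcal{O}_{\mathcal{P}_n}((\log n)^2/(n-i-1)) + \mathcal{O}_{\mathcal{P}_n}(1/\sqrt{n-i-1})$; summing over $i \le k-1$ gives $\mathcal{O}_{\mathcal{P}_n}(k/\sqrt{n})$, which is $o_{\mathcal{P}_n}(1)$ because $k=o(n^\rho)$ with $\rho<1/2$. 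For $B_n$, since $\sigma_{i+2,n}^2(t_{i,n}) \ge c(n-i-1)$ and $s_{i+1}^2(t_{i,n}) = \mathcal{O}_{\mathcal{P}_n}(1)$, each log contributes $\mathcal{O}_{\mathcal{P}_n}(1/(n-i-1))$, summing to $\mathcal{O}_{\mathcal{P}_n}(k/n) = o_{\mathcal{P}_n}(1)$.

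The delicate piece is $A_n$. I first show $t_{i,n} - \theta_n^a$ is small uniformly in $i \le k-1$: from $\overline{m}_{i+1,n}(t_{i,n}) = (na - \Sigma_{1,i})/(n-i)$ and $\overline{m}_{i+1,n}(\theta_n^a) = (na - \sum_{j=1}^i m_j(\theta_n^a))/(n-i)$, one obtains
\begin{equation*}
\overline{m}_{i+1,n}(t_{i,n}) - \overline{m}_{i+1,n}(\theta_n^a) \;=\; \frac{1}{n-i}\sum_{j=1}^i \bigl(m_j(\theta_n^a) - Y_j\bigr),
\end{equation*}
whose second moment, via Lemma~\ref{momentsYj} and Corollary~\ref{covEtVarYj}, is $\mathcal{O}(i/(n-i)^2)$. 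Inverting through $(\mathcal{U}f)$ and $(\mathcal{H}\kappa)$ yields $t_{i,n} - \theta_n^a = \mathcal{O}_{\mathcal{P}_n}(\sqrt{k}/n)$ uniformly in $i$. Then a Taylor expansion of $\kappa_{i+1}$ around $\theta_n^a$ gives
\begin{equation*}
\log\frac{\widetilde{p}_{i+1}^{t_{i,n}}(Y_{i+1})}{\widetilde{p}_{i+1}^{\theta_n^a}(Y_{i+1})} \;=\; (t_{i,n}-\theta_n^a)\bigl(Y_{i+1} - m_{i+1}(\theta_n^a)\bigr) + \mathcal{O}_{\mathcal{P}_n}((t_{i,n}-\theta_n^a)^2),
\end{equation*}
and using $|Y_{i+1}| = \mathcal{O}_{\mathcal{P}_n}(\log n)$ the total sum is $\mathcal{O}_{\mathcal{P}_n}(k^{3/2}\log n / n + k^2/n^2) = o_{\mathcal{P}_n}(1)$ under $k=o(n^\rho)$, $\rho<1/2$.

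The main obstacle is this last step: obtaining \emph{uniform} control of $t_{i,n} - \theta_n^a$ together with uniform bounds on $|Y_{i+1}|$, both strong enough that the accumulated errors over all $k$ factors remain negligible. The Edgeworth error $o_{\mathcal{P}_n}(1)/\sqrt{n}$ summed $k$ times is what forces the constraint $\rho<1/2$; everything else leaves room. Once $A_n + B_n + C_n = o_{\mathcal{P}_n}(1)$ on an event $B_n$ of asymptotically full probability, Lemma~\ref{devLimLandau} applied to $\exp$ yields the representation $(\ref{errel})$, and Lemma~\ref{lienISdvt} delivers $\|Q_{nak} - \widetilde{P}_1^k\|_{TV} \to 0$.
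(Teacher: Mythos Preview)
Your argument is correct, but it takes a substantially more laborious route than the paper's. The paper does \emph{not} use the sequential factorization of Section~\ref{firstCalculus} for this theorem. Instead it applies Fact~\ref{densCond} once, tilting directly at the deterministic $\theta_n^a$, to obtain
\[
p_k(Y_1^k)\;=\;p_{\widetilde{X}_1^k}^{\theta_n^a}(Y_1^k)\cdot
\frac{\sigma_{1,n}(\theta_n^a)}{\sigma_{k+1,n}(\theta_n^a)}\cdot
\frac{p_{\overline{S}_{k+1,n}}(Z_k)}{p_{\overline{S}_{1,n}}(0)},
\qquad
Z_k:=\frac{\sum_{j=1}^k\bigl(m_j(\theta_n^a)-Y_j\bigr)}{\sigma_{k+1,n}(\theta_n^a)} .
\]
There is thus only \emph{one} Edgeworth ratio to analyze, and since Lemma~\ref{maxYj} gives $|Z_k|\le \mathcal{O}_{\mathcal{P}_n}(1)\cdot k\log n/\sqrt{n-k}=o_{\mathcal{P}_n}(1)$ under $k=o(n^\rho)$, $\rho<1/2$, the multiplicative error $1+o_{\mathcal{P}_n}(1)$ follows in a few lines. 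Your pieces $A_n$ and $B_n$ never appear, because the leading factor is already $\prod_j \widetilde p_j^{\theta_n^a}(Y_j)=g_k(Y_1^k)$.

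What you do instead is precisely the machinery the paper reserves for Theorem~\ref{largekTheorem}, where $k/n\to 1$ and the one-shot formula above becomes useless (then $Z_k$ is no longer small). That machinery forces you to (i) sum $k$ Edgeworth remainders, (ii) sum $k$ variance ratios, and (iii) control $\max_i|t_{i,n}-\theta_n^a|$ and feed it through the tilted densities. Each step is doable, and your outline is sound, but the paper's direct approach bypasses all of it for small $k$.

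One small point: your claimed uniform rate $t_{i,n}-\theta_n^a=\mathcal{O}_{\mathcal{P}_n}(\sqrt{k}/n)$ is asserted without a maximal inequality; a crude second-moment bound plus a union over $i\le k-1$ yields only $\mathcal{O}_{\mathcal{P}_n}(k/n)$. This does not matter, since the resulting $A_n=\mathcal{O}_{\mathcal{P}_n}(k^2\log n/n)+\mathcal{O}_{\mathcal{P}_n}(k^3/n^2)$ is still $o_{\mathcal{P}_n}(1)$ under $\rho<1/2$, but you should either weaken the stated rate or invoke an argument in the spirit of Lemma~\ref{maxVi}.
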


\begin{proof}
We have that
\begin{equation}
\pi_{k}(Y_1^k):= p\left( X_{1}^k = Y_{1}^k | S_{1,n} = na \right) = 
\frac{p_{\widetilde{X}_{1}^{k}}\left(Y_{1}^k \right)p_{\widetilde{S}_{k+1,n}}(na-\Sigma_{1,k})}{p_{\widetilde{S}_{1,n}}(na)}. 
\end{equation}

\noindent
Then we normalize, so that 
\begin{equation}
\pi_{k}(Y_1^k) = p_{\widetilde{X}_{1}^{k}}\left(Y_{1}^k \right)
\frac{\sigma_{1,n}(\theta^{a}_{n})}{\sigma_{k+1,n}(\theta^{a}_{n})}
\frac{p_{\overline{S}_{k+1,n}}(Z_{k})}{p_{\overline{S}_{1,n}}(0)} \quad \textrm{where} \quad
Z_{k}:=\frac{\sum\limits_{j=1}^{k}m_{j}(\theta^{a}_{n})-Y_{j}}{\sigma_{k+1,n}(\theta^{a}_{n})}.
\end{equation}

\noindent
Since $\left( \mathcal{AM}4 \right)$ holds, we get from Corollary $\ref{coroEdgeTT}$ with 
$m=3$ that  
\begin{equation}
\pi_{k}(Y_1^k) = p_{\widetilde{X}_{1}^{k}}\left(Y_{1}^k \right)
\frac{\sigma_{1,n}(\theta^{a}_{n})}{\sigma_{k+1,n}(\theta^{a}_{n})}
\frac{\mathfrak{n}(Z_{k}) \left( 1+
\frac{\mu_{k+1,n}^{3}(\theta^{a}_{n})}{6(s_{k+1,n}^{2}(\theta^{a}_{n}))^{3/2}} H_{3}(Z_{k}) \right) + \frac{o(1)}{(n-k)^{3/2}}}
{\mathfrak{n}(0) + \frac{o(1)}{n^{3/2}}}
\end{equation}

\noindent\\
First, we get from Corollary $\ref{mjThetan}$ that 
\begin{equation*}
\frac{\sigma_{1,n}(\theta^{a}_{n})}{\sigma_{k+1,n}(\theta^{a}_{n})} =
\left( 1 + \frac{s_{1,k}^{2}(\theta^{a}_{n})}{s_{k+1,n}^{2}(\theta^{a}_{n})} \right)^{1/2} = 
\left( 1 + \frac{k}{n-k}\mathcal{O}(1) \right)^{1/2} \quad \textrm{and} \quad 
\frac{\mu_{k+1,n}^{3}(\theta^{a}_{n})}{6(s_{k+1,n}^{2}(\theta^{a}_{n}))^{3/2}} = 
\frac{\mathcal{O}(1)}{(n-k)^{1/2}}. 
\end{equation*}

\noindent 
Then, $(\ref{conditionK})$ implies that
\begin{equation}
\frac{\sigma_{1,n}(\theta^{a}_{n})}{\sigma_{k+1,n}(\theta^{a}_{n})} = 1+o(1) 
\quad \textrm{and} \quad  
\frac{\mu_{k+1,n}^{3}(\theta^{a}_{n})}{6(s_{k+1,n}^{2}(\theta^{a}_{n}))^{3/2}} = o(1). 
\end{equation}

\noindent\\
Now, we get from Corollary $\ref{mjThetan}$ and Lemma $\ref{maxYj}$ that 
\begin{equation}
Z_{k} = \frac{k\log n}{\sqrt{n-k}} \mathcal{O}_{\mathcal{P}_{n}}(1).  
\end{equation}

\noindent 
Then, $(\ref{conditionK})$ implies that 
\begin{equation}
Z_{k} = o_{\mathcal{P}_{n}}(1), \quad \textrm{so that} \quad
\mathfrak{n}(Z_{k}) \enskip {\underset{\mathcal{P}_{n}} {\longrightarrow}} \enskip \mathfrak{n}(0) \quad \textrm{and} \quad H_{3}(Z_{k}) \enskip {\underset{\mathcal{P}_{n}} {\longrightarrow}} \enskip H_{3}(0) = 0. 
\end{equation}

\noindent
We obtain from the preceding lines that 
\begin{equation}
\pi_{k}(Y_1^k) = p_{\widetilde{X}_{1}^{k}}\left(Y_{1}^k \right) (1+o_{\mathcal{P}_{n}}(1)). 
\end{equation}

\noindent
Finally, we apply Lemma $\ref{lienISdvt}$ to conclude the proof. 
\end{proof}

\subsection{Theorem with large $k$}

\subsubsection{Statement of the Theorem}

Let $y_{1}^{n} \in \left( \mathcal{S}_{X} \right)^{n}$. Then, for any $0 \leq i \leq k-1$, there exists a unique $\tau_{i}(y_{1}^{n})$ s.t. 
\begin{equation}
\overline{m}_{i+1,n}(\tau_{i}(y_{1}^{n})) = 
\frac{\sum\limits_{j=i+1}^{n} y_{j}}{n-i}. 
\end{equation}

\noindent
For $0 \leq i \leq k-1$, define a density $g(y_{i+1}|y_{1}^{i})$ by 
\begin{equation*}
g(y_{i+1}|y_{1}^{i}) := C_{i}^{-1} \widetilde{p}_{i+1}(y_{i+1}) 
\exp \left(-\frac{\left(y_{i+1} - m_{i+1}(\tau_{i}(y_{1}^{n}))\right)^{2}}
{2s_{i+2,n}^{2}(\tau_{i}(y_{1}^{n}))} \right)
\exp \left( \frac{3\alpha_{i+2,n}^{(3)}(\tau_{i}(y_{1}^{n}))}
{\sigma_{i+2,n}(\tau_{i}(y_{1}^{n}))} y_{i+1} \right),  
\end{equation*}

\noindent
where $C_{i}$ is a normalizing constant which insures that 
$\int g(y_{i+1}|y_{1}^{i}) dy_{i+1} = 1$ and 
\begin{equation*}
\alpha_{i+e,n}^{(3)}(\tau_{i}(y_{1}^{n})) := 
\frac{\mu_{i+e,n}^{3}(\tau_{i}(y_{1}^{n}))}{6(s_{i+e,n}^{2}(\tau_{i}(y_{1}^{n})))^{3/2}}. 
\end{equation*}

\noindent
Then, we define the limiting density on $\mathbb{R}^k$ by 
\begin{equation}
g_{k}(y_{1}^{k}) := \prod_{i=0}^{k-1} g(y_{i+1}|y_{1}^{i}).
\end{equation}

\bigskip

\begin{theo} \label{largekTheorem}
Suppose that the Assumptions stated in Section 2.6 hold. Assume that $k$ is of order 
$n - (\log n)^{\tau}$ with $\tau > 6$.
\begin{equation}
\left\| Q_{nak}-G_{k} \right\|_{TV} \enskip {\underset{n \infty} {\longrightarrow}} \enskip 0,
\end{equation}

\noindent
where $G_{k}$ is the distribution associated to the density $g_{k}$. 

\end{theo}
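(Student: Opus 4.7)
The plan is to verify the hypothesis of Lemma \ref{lienISdvt}, namely that on a sequence of events $B_n\in\mathcal A_n$ with $\mathcal P_n(B_n)\to 1$ one has $p_k(Y_1^k)=g_k(Y_1^k)[1+o_{\mathcal P_n}(1)]$. From Section \ref{firstCalculus} the density factors as $p_k(Y_1^k)=\prod_{i=0}^{k-1}\pi_i$ with
\[
\pi_i=\widetilde p_{i+1}(Y_{i+1})\,\frac{\sigma_{i+1,n}}{\sigma_{i+2,n}}\,\frac{\overline q_{i+2,n}(Z_{i+1})}{\overline q_{i+1,n}(0)},\qquad Z_{i+1}=\frac{m_{i+1}-Y_{i+1}}{\sigma_{i+2,n}},
\]
all quantities tilted at $\theta=t_{i,n}$. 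By Theorem \ref{maxTi} the condition $\max_i|t_{i,n}|=\mathcal O_{\mathcal P_n}(1)$ of Corollary \ref{condPetrovTI} is satisfied, so the Edgeworth expansion of the second extension is available, for any order $m$, for both $\overline q_{i+2,n}(Z_{i+1})$ and $\overline q_{i+1,n}(0)$.

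The central step is a factor-wise comparison. I would take $m=5$, writing
\[
\overline q_{i+e,n}(x)=\mathfrak n(x)\bigl[1+\alpha_{i+e,n}^{(3)}H_3(x)+\overline P_{4,n}^{(i,e)}(x)+\overline P_{5,n}^{(i,e)}(x)\bigr]+\frac{o_{\mathcal P_n}(1)}{(n-i-e+1)^{3/2}},
\]
with $\alpha_{i+e,n}^{(3)}=\mu_{i+e,n}^{3}(t_{i,n})/(6(s_{i+e,n}^{2}(t_{i,n}))^{3/2})$. Since $H_3$ and $H_5$ vanish at the origin, $\overline q_{i+1,n}(0)$ retains only the $H_4, H_6$ contributions of size $\mathcal O_{\mathcal P_n}(1/(n-i))$. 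Dividing, $\mathfrak n(Z_{i+1})/\mathfrak n(0)=\exp(-Z_{i+1}^2/2)$ exactly matches the Gaussian factor of $g(Y_{i+1}\mid Y_1^i)$, and the linearisation $H_3(Z)=Z^3-3Z$ together with $-3\alpha Z_{i+1}=3\alpha Y_{i+1}/\sigma_{i+2,n}-3\alpha m_{i+1}/\sigma_{i+2,n}$ (writing $\alpha=\alpha_{i+2,n}^{(3)}$) reproduces the exponential tilt present in $g$, up to the constant $\exp(-3\alpha m_{i+1}/\sigma_{i+2,n})$ which is absorbed into $C_i$. Expanding $C_i$ as a Gaussian integral against $\widetilde p_{i+1}$ and using $(\mathcal C v)$, $(\mathcal{AM}6)$, the remaining constants $\log(\sigma_{i+1,n}/\sigma_{i+2,n})$ and $\log C_i$ are shown to cancel at first order in $1/(n-i)$.

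After these cancellations, $\pi_i/g(Y_{i+1}\mid Y_1^i)=1+\eta_i$, where $\eta_i$ collects: the cubic residual $\alpha_{i+2,n}^{(3)}Z_{i+1}^3$; the centered fourth-order correction $\overline P_{4,n}^{(i,2)}(Z_{i+1})-\overline P_{4,n}^{(i,1)}(0)$; the fifth-order term $\overline P_{5,n}^{(i,2)}(Z_{i+1})$; quadratic remainders from Taylor-expanding $\exp$ and $\log$ via Lemma \ref{devLimLandau}; and the Edgeworth remainder $o_{\mathcal P_n}(1)/(n-i)^{3/2}$. Combining Lemma \ref{maxYj}, which gives $\max_j|Y_j|=\mathcal O_{\mathcal P_n}(\log n)$, with Fact \ref{ppteMomentsTin} yields $|Z_{i+1}|=\mathcal O_{\mathcal P_n}(\log n/\sqrt{n-i})$, so each $|\eta_i|$ is a polynomial in $\log n$ divided by a positive power of $n-i\ge(\log n)^\tau$. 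Summing over $i=0,\dots,k-1$, the tail sums $\sum_{j\ge(\log n)^\tau}j^{-s}$ absorb every contribution provided $\tau$ is large enough; applying Lemma \ref{devLimLandau} one last time gives $\log\prod(1+\eta_i)=\sum\eta_i+\mathcal O_{\mathcal P_n}(\sum\eta_i^2)=o_{\mathcal P_n}(1)$ on the event $B_n$ on which all the stochastic bounds above are realized, and Lemma \ref{lienISdvt} concludes.

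The main technical obstacle is this cumulative-error bookkeeping: although each individual $\eta_i$ is small, the number of factors is $k\sim n$, so every cancellation (between $\alpha H_3(Z)$ and the exponential tilt of $g$, between $\overline P_{4,n}^{(i,1)}(0)$ and $\overline P_{4,n}^{(i,2)}(Z_{i+1})$, and between $\log(\sigma_{i+1,n}/\sigma_{i+2,n})$ and $\log C_i$) has to be followed carefully before summing, and the interplay between the logarithmic trajectory bound of Lemma \ref{maxYj} and the polynomial tail $(\log n)^\tau$ in the gap $n-k$ is what forces the threshold $\tau>6$ under this strategy.
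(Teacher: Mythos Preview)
Your proposal is correct and follows essentially the same route as the paper: reduce via Lemma~\ref{lienISdvt}, factor $p_k=\prod\pi_i$, expand each $\pi_i$ by the fifth-order Edgeworth formula of Corollary~\ref{condPetrovTI}, match the Gaussian and linear-tilt pieces with $g(Y_{i+1}\mid Y_1^i)$, and then control the product of residual constants by exhibiting the first-order cancellations among $C_i$, $\sigma_{i+1,n}/\sigma_{i+2,n}$ and $\exp(-3\alpha_{i+2,n}^{(3)}m_{i+1}/\sigma_{i+2,n})$ together with the $\beta^{(4)}$ and $\beta^{(6)}$ differences, using Lemma~\ref{maxYj} to bound $Z_{i+1}$ and Lemma~\ref{uinZin}/Lemma~\ref{devLimLandau} to sum. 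The identification of the threshold $\tau>6$ via the term $(\log n)^3/(n-i-1)^{1/2}$ is exactly the paper's mechanism.
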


\begin{proof}
We get from the criterion for convergence in total variation distance stated in Section 2.4. that it is enough to prove the following Theorem. 
\end{proof}

\bigskip

\begin{theo}
Suppose that the Assumptions stated in Section 2.6 hold. Assume that $k$ is of order 
$n - (\log n)^{\tau}$ with $\tau > 6$. Then, there exists $(B_{n})_{n \geq 1} \in \mathcal{A}_{\rightarrow 1}$ s.t. for any $n \geq 1$,  
\begin{equation}
p_{k}(Y_{1}^k) := p(X_{1}^k = Y_{1}^k | S_{1,n} = na) = 
g_{k}(Y_{1}^k) [1+o_{\mathcal{P}_{n}}(1)] \quad \textrm{on} \enskip B_{n}. 
\end{equation}
\end{theo}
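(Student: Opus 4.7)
The plan is to start from the factorization $p_k(Y_1^k) = \prod_{i=0}^{k-1} \pi_i$ of Section 2.5, in which
\begin{equation*}
\pi_i = \widetilde{p}_{i+1}^{t_{i,n}}(Y_{i+1}) \cdot \frac{\sigma_{i+1,n}(t_{i,n})}{\sigma_{i+2,n}(t_{i,n})} \cdot \frac{\overline{q}_{i+2,n}(Z_{i+1})}{\overline{q}_{i+1,n}(0)},
\end{equation*}
and to apply the Edgeworth expansion of Corollary $\ref{condPetrovTI}$ (with $m=5$, the largest order permitted by $\left( \mathcal{AM}6 \right)$) to both $\overline{q}_{i+1,n}(0)$ and $\overline{q}_{i+2,n}(Z_{i+1})$; its hypothesis is granted by Theorem $\ref{maxTi}$ together with the standing assumptions. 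I define $B_n$ as the intersection of the events on which the various $\mathcal{O}_{\mathcal{P}_n}(1)$ quantities of Fact $\ref{ppteMomentsTin}$, Lemma $\ref{maxYj}$, Theorem $\ref{maxTi}$ and the Edgeworth residual are bounded by fixed deterministic constants; each has probability tending to $1$, so $(B_n)_{n \geq 1} \in \mathcal{A}_{\rightarrow 1}$.

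On $B_n$ the hypothesis $k \leq n - (\log n)^\tau$ forces $n-i \geq (\log n)^\tau$ uniformly in $i \leq k-1$, so that
\begin{equation*}
|Z_{i+1}| = \frac{|m_{i+1}(t_{i,n}) - Y_{i+1}|}{\sigma_{i+2,n}(t_{i,n})} = \mathcal{O}\!\left((\log n)^{1-\tau/2}\right) = o(1)
\end{equation*}
uniformly in $i$. Since $H_\nu(0)=0$ for odd $\nu$, the expansion at $0$ collapses to $\overline{q}_{i+1,n}(0) = \mathfrak{n}(0)\bigl(1 + \overline{P}_{4,n}^{(i,1)}(0)\bigr) + o_{\mathcal{P}_n}(1)/(n-i)^{3/2}$. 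Expanding $\alpha_{i+2,n}^{(3)} H_3(Z_{i+1}) = -3\alpha_{i+2,n}^{(3)} Z_{i+1} + \alpha_{i+2,n}^{(3)} Z_{i+1}^3$ and invoking Lemma $\ref{devLimLandau}$ to convert $1+(\cdot)$ into $\exp(\cdot)$, the ratio becomes
\begin{equation*}
\frac{\overline{q}_{i+2,n}(Z_{i+1})}{\overline{q}_{i+1,n}(0)} = e^{-Z_{i+1}^2/2}\,\exp\!\bigl(-3\alpha_{i+2,n}^{(3)} Z_{i+1}\bigr)\,(1+\eta_i),
\end{equation*}
where $\eta_i$ collects the cubic $\alpha Z^3$ term, the difference $\overline{P}_{4,n}^{(i,2)}(Z_{i+1})-\overline{P}_{4,n}^{(i,1)}(0)$, the $P_5$-contribution, and the Edgeworth remainder. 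Substituting $-3\alpha Z_{i+1} = 3\alpha Y_{i+1}/\sigma_{i+2,n} - 3\alpha m_{i+1}/\sigma_{i+2,n}$ and comparing to $h_i(Y_{i+1}) = C_i\, g(Y_{i+1}|Y_1^i)$, the $y$-dependent pieces match exactly and one is left with
\begin{equation*}
\frac{\pi_i}{g(Y_{i+1}|Y_1^i)} = K_i\,(1+\eta_i), \qquad K_i := \frac{\sigma_{i+1,n}(t_{i,n})}{\sigma_{i+2,n}(t_{i,n})}\, C_i \exp\!\left(-\frac{3\alpha_{i+2,n}^{(3)}(t_{i,n})\,m_{i+1}(t_{i,n})}{\sigma_{i+2,n}(t_{i,n})}\right).
\end{equation*}

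The main obstacle is showing $\prod_i K_i = 1 + o(1)$: each of the two constituents of $K_i$ is individually $1 + \mathcal{O}(1/(n-i))$, whose logs sum to $\mathcal{O}(\log n)$, so one must perform second-order expansions and exhibit the exact cancellation
\begin{equation*}
\frac{\sigma_{i+1,n}}{\sigma_{i+2,n}} = 1 + \frac{s_{i+1}^2(t_{i,n})}{2\sigma_{i+2,n}^2(t_{i,n})} + \mathcal{O}\!\left(\frac{1}{(n-i)^2}\right),
\end{equation*}
\begin{equation*}
C_i = \exp\!\left(\frac{3\alpha_{i+2,n}^{(3)}\,m_{i+1}(t_{i,n})}{\sigma_{i+2,n}(t_{i,n})}\right)\left(1 - \frac{s_{i+1}^2(t_{i,n})}{2\sigma_{i+2,n}^2(t_{i,n})} + \mathcal{O}\!\left(\frac{1}{(n-i)^2}\right)\right),
\end{equation*}
the second being obtained by rewriting $\widetilde{p}_{i+1}^{t_{i,n}}(y)\exp(3\alpha y/\sigma_{i+2,n}) = \Phi_{i+1}(t_{i,n}+3\alpha/\sigma_{i+2,n})/\Phi_{i+1}(t_{i,n}) \cdot \widetilde{p}_{i+1}^{t_{i,n}+3\alpha/\sigma_{i+2,n}}(y)$ and Taylor-expanding the Gaussian kernel in $1/\sigma_{i+2,n}^2$. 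Once this cancellation is secured, $K_i = 1 + \mathcal{O}(1/(n-i)^2)$, hence $\sum_i (K_i-1) = \mathcal{O}(1/(n-k)) = \mathcal{O}((\log n)^{-\tau}) \to 0$. For the stochastic error, the dominant piece is $\alpha_{i+2,n}^{(3)} Z_{i+1}^3 = \mathcal{O}_{\mathcal{P}_n}((\log n)^3/(n-i)^2)$, giving $\sum_i|\eta_i| = \mathcal{O}_{\mathcal{P}_n}((\log n)^{3-\tau}) + \mathcal{O}_{\mathcal{P}_n}((\log n)^{2-\tau}) + o_{\mathcal{P}_n}(1)/(n-k)^{1/2}$, each $o_{\mathcal{P}_n}(1)$ with comfortable slack under $\tau>6$ (the extra margin accommodating the quadratic remainders in $\log(1+\eta_i)$). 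Multiplying, $\prod_i \pi_i/g(Y_{i+1}|Y_1^i) = 1 + o_{\mathcal{P}_n}(1)$ on $B_n$, which is the claim.
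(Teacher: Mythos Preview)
Your outline is essentially the paper's own proof: same factorization $p_k=\prod_i\pi_i$, same fifth-order Edgeworth expansion via Corollary \ref{condPetrovTI}, same identification of $g(Y_{i+1}\mid Y_1^i)$, and same reduction to showing that the residual product $\prod_i K_i(1+\eta_i)$ tends to $1$. Your $K_i$ is exactly the paper's $L_{i,n}$, and your second-order cancellation between $\sigma_{i+1,n}/\sigma_{i+2,n}$ and $C_i$ is the content of the paper's Section~4.2.4.

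One point needs to be made explicit. You assert that the dominant piece of $\eta_i$ is $\alpha_{i+2,n}^{(3)}Z_{i+1}^3$ and that $\sum_i|\eta_i|$ is $o_{\mathcal{P}_n}(1)$, but the difference $\overline P_{4,n}^{(i,2)}(Z_{i+1})-\overline P_{4,n}^{(i,1)}(0)$ contains a constant-in-$Z$ piece
\[
3\bigl(\beta_{i+2,n}^{(4)}-\beta_{i+1,n}^{(4)}\bigr)-15\bigl(\beta_{i+2,n}^{(6)}-\beta_{i+1,n}^{(6)}\bigr),
\]
and each $\beta$ is only $\mathcal{O}_{\mathcal{P}_n}(1)/(n-i)$ a priori; a naive bound on the difference would then give a non-summable $\mathcal{O}(1/(n-i))$. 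The paper spells out the needed algebra (equations \eqref{annulation}--\eqref{beta6}): writing $s_{i+1,n}^2=s_{i+2,n}^2+s_{i+1}^2$ and $\lambda_{i+1,n}=\lambda_{i+2,n}+\lambda_{i+1}$, the leading $(n-i)^3$ terms in the numerator cancel and one gets $\beta_{i+2,n}^{(\cdot)}-\beta_{i+1,n}^{(\cdot)}=\mathcal{O}_{\mathcal{P}_n}(1)/(n-i)^2$. Once that is inserted, your summability claim is correct.

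Your route to the $C_i$ expansion, via the identity
\[
\widetilde p_{i+1}^{\,t_{i,n}}(y)\,e^{\beta y}=\frac{\Phi_{i+1}(t_{i,n}+\beta)}{\Phi_{i+1}(t_{i,n})}\,\widetilde p_{i+1}^{\,t_{i,n}+\beta}(y),\qquad \beta=\frac{3\alpha_{i+2,n}^{(3)}}{\sigma_{i+2,n}},
\]
is a neat variant: it reduces the integral to a pure Gaussian-kernel expectation under a shifted tilt, and the remainder $|e^{-x}-1+x|\le x^2/2$ together with $(\mathcal{AM}6)$ suffices, avoiding the integration-by-parts argument of Fact~\ref{Ji} (which in the paper invokes $(\mathcal{C}f)$).
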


\bigskip

\textit{The proof is given hereafter, in three steps. Throughout the proof, all the tilted densities considered pertain to $\theta = t_{i,n}$. We write $s_{j}^{2}$, $\mu_{j}^{\ell}$ instead of $s_{j}^{2}(t_{i,n})$, $\mu_{j}^{\ell}(t_{i,n})$.}

\bigskip

\subsubsection{Identifying $g(Y_{i+1}|Y_{1}^{i})$} 

When $y_{1}^{n} = Y_{1}^{n}$, we have that 
\begin{equation}
\sum\limits_{j=i+1}^{n} y_{j} = 
\sum\limits_{j=i+1}^{n} Y_{j} =
na - \sum\limits_{j=1}^{i} Y_{j} \qquad \mathcal{P}_{n} \enskip \textrm{a.s.}
\end{equation}

\noindent
and
\begin{equation}
\tau_{i}(Y_{1}^{n}) = t_{i,n}. 
\end{equation}

\noindent\\
We recall from the first calculus of Section $\ref{firstCalculus}$ that
\begin{equation}
\pi_{i} = \widetilde{p}_{i+1}(Y_{i+1}) \frac{\sigma_{i+1,n}}{\sigma_{i+2,n}} 
\frac{\overline{q}_{i+2,n}(Z_{i+1})}{\overline{q}_{i+1,n}(0)}, \enskip \textrm{where } 
Z_{i+1} := \frac{m_{i+1} - Y_{i+1}}{\sigma_{i+2,n}}.
\end{equation}

\noindent 
Since $\left( \mathcal{AM}6 \right)$ holds, we get from Corollary $\ref{condPetrovTI}$ with 
$m=5$ that 
\begin{equation}
\label{apresEdge}
\pi_{i} = \widetilde{p}_{i+1}(Y_{i+1}) \frac{\sigma_{i+1,n}}{\sigma_{i+2,n}} 
\left\{ \frac{ \mathfrak{n}(Z_{i+1}) \left[1 + \sum\limits_{\nu = 3}^{5} \overline{P}_{\nu}^{i+2,n}  (Z_{i+1}) \right] + 
\frac{o_{\mathcal{P}_{n}}(1)} {(n-i-1)^{3/2}} }
{\mathfrak{n}(0) \left[1 + \sum\limits_{\nu = 3}^{5} \overline{P}_{\nu}^{i+1,n} (0) \right] + 
\frac{o_{\mathcal{P}_{n}}(1)} {(n-i)^{3/2}} } \right\}.
\end{equation}

\noindent\\
For $e \in \left\{ 1, 2 \right\}$, set 
\begin{equation*}
\alpha_{i+e,n}^{(3)} := \frac{\mu_{i+e,n}^{3}}{6(s_{i+e,n}^{2})^{3/2}} =
\frac{\mathcal{O}_{\mathcal{P}_{n}}(1)} {(n-i-e+1)^{1/2}}, 
\end{equation*}

\begin{equation*}
\beta_{i+e,n}^{(6)} := \frac{(\mu_{i+e,n}^{3})^2}{72(s_{i+e,n}^{2})^{3}}  
= \frac{\mathcal{O}_{\mathcal{P}_{n}}(1)} {n-i-e+1} \quad ; \quad
\beta_{i+e,n}^{(4)} := \frac{ \mu_{i+e,n}^{4} - 3\sum\limits_{j=i+e}^{n} (s_{j}^{2})^2}
{24(s_{i+e,n}^{2})^{2}} = \frac{\mathcal{O}_{\mathcal{P}_{n}}(1)} {n-i-e+1}, 
\end{equation*}

\begin{equation*}
\gamma_{i+e,n}^{(9)} := \frac{(\mu_{i+e,n}^{3})^3}{1296(s_{i+e,n}^{2})^{9/2}} \quad ; \quad
\gamma_{i+e,n}^{(7)} := \frac{\mu_{i+e,n}^{3} \left( \mu_{i+e,n}^{4} - 3\sum\limits_{j=i+e}^{n} (s_{j}^{2})^2 \right)} {144(s_{i+e,n}^{2})^{7/2}} \quad ; \quad
\gamma_{i+e,n}^{(5)} := \frac{ \mu_{i+e,n}^{5} - 10\sum\limits_{j=i+e}^{n} \mu_{j}^{3} s_{j}^{2}}{120(s_{i+e,n}^{2})^{5/2}}, 
\end{equation*}

\noindent
where, for $\ell \in \left\{ 5, 7, 9 \right\}$,
\begin{equation}
\gamma_{i+e,n}^{(\ell)} = \frac{\mathcal{O}_{\mathcal{P}_{n}}(1)} {(n-i-e+1)^{3/2}}. 
\end{equation}

\noindent\\
For $m \in \left\{3, ..., 9 \right\}$, replacing $H_{m}(Z_{i+1})$ by its expression, we have that 
\begin{equation}
\overline{P}_{3}^{i+e,n}(Z_{i+1}) = \alpha_{i+e,n}^{(3)} \left[ Z_{i+1}^{3} - 3Z_{i+1} \right], 
\end{equation}

\begin{equation}
\overline{P}_{4}^{i+e,n}(Z_{i+1}) =
\beta_{i+e,n}^{(6)} \left[ Z_{i+1}^{6} - 15Z_{i+1}^{4} + 45Z_{i+1}^{2} - 15 \right] +
\beta_{i+e,n}^{(4)} \left[Z_{i+1}^{4} - 6Z_{i+1}^{2} + 3 \right],  
\end{equation}

\begin{equation}
\overline{P}_{5}^{i+e,n}(Z_{i+1}) =
\gamma_{i+e,n}^{(9)} \left[ Z_{i+1}^{9}+ ... + 945Z_{i+1} \right] +
\gamma_{i+e,n}^{(7)} \left[ Z_{i+1}^{7} + ... - 105Z_{i+1} \right] +
\gamma_{i+e,n}^{(5)} \left[ Z_{i+1}^{5} + ... + 15Z_{i+1} \right]. 
\end{equation}

\noindent\\
Therefore, 
\begin{equation}
\sum\limits_{\nu = 3}^{5} \overline{P}_{\nu}^{i+2,n}  (Z_{i+1}) = 
- 3 \alpha_{i+2,n}^{(3)} Z_{i+1} - 15 \beta_{i+2,n}^{(6)} + 3 \beta_{i+2,n}^{(4)} + 
\mathcal{O}_{\mathcal{P}_{n}}(1) \frac{(\log n)^{3}}{(n-i-1)^{2}}. 
\end{equation}

\noindent
and 
\begin{equation*}
\sum\limits_{\nu = 3}^{5} \overline{P}_{\nu,n}^{i+1}  (0) = 
 - 15 \beta_{i+1,n}^{(6)} + 3 \beta_{i+1,n}^{(4)}. 
\end{equation*}

\noindent\\
Since $\mathfrak{n}(Z_{i+1}) = \mathcal{O}_{\mathcal{P}_{n}}(1)$, we can factorize $\mathfrak{n}(Z_{i+1})$ in the numerator of the bracket of $(\ref{apresEdge})$, so that 
\begin{equation*}
\pi_{i} = 
\widetilde{p}_{i+1}(Y_{i+1}) \frac{\sigma_{i+1,n}}{\sigma_{i+2,n}}
\frac{ \mathfrak{n}(Z_{i+1}) \left[  1  - 3 \alpha_{i+2,n}^{(3)} Z_{i+1} - 15 \beta_{i+2,n}^{(6)} + 3 \beta_{i+2,n}^{(4)} + 
\mathcal{O}_{\mathcal{P}_{n}}(1) \frac{(\log n)^{3}}{(n-i-1)^{2}} +  
\frac{o_{\mathcal{P}_{n}}(1)} {(n-i-1)^{3/2}} \right] } 
{ \mathfrak{n}(0) \left[ 1 - 15 \beta_{i+1,n}^{(6)} + 3 \beta_{i+1,n}^{(4)} +  \frac{o_{\mathcal{P}_{n}}(1)} {(n-i)^{3/2}}                                     \right] }. 
\end{equation*}

\bigskip

\noindent\\
Since $n-k$ is of order $(\log n)^{\tau}$ with $\tau > 6$, we have for all $n > 1$, and 
$i = 0, ..., k-1$,  
\begin{equation*}
0 \leq \frac{(\log n)^{3}}{(n-i-1)^{2}} (n-i-1)^{3/2} \leq \frac{(\log n)^{3}}{(n-k)^{1/2}}
\longrightarrow 0 \enskip \textrm{ as } n \rightarrow \infty. 
\end{equation*}

\noindent
Therefore, 
\begin{equation*}
\frac{(\log n)^{3}}{(n-i-1)^{2}} = \frac{o(1)}{(n-i-1)^{3/2}}, \enskip \textrm{so that} \enskip \mathcal{O}_{\mathcal{P}_{n}}(1) \frac{(\log n)^{3}}{(n-i-1)^{2}} =  
\frac{o_{\mathcal{P}_{n}}(1)} {(n-i-1)^{3/2}}
\end{equation*}

\noindent\\
Consequently, 
\begin{equation*}
\pi_{i} = \widetilde{p}_{i+1}(Y_{i+1}) \frac{\sigma_{i+1,n}}{\sigma_{i+2,n}} \exp \left( - \frac{Z_{i+1}^{2}}{2}  \right)
\left\{ \frac{ 1 + \frac{3\alpha_{i+2,n}^{(3)}}{\sigma_{i+2,n}} Y_{i+1} - \frac{3\alpha_{i+2,n}^{(3)}}{\sigma_{i+2,n}} m_{i+1} - 15 \beta_{i+2,n}^{(6)} + 3 \beta_{i+2,n}^{(4)} +  \frac{o_{\mathcal{P}_{n}}(1)} {(n-i-1)^{3/2}} } 
{ 1 - 15 \beta_{i+1,n}^{(6)} + 3 \beta_{i+1,n}^{(4)} + \frac{o_{\mathcal{P}_{n}}(1)} {(n-i)^{3/2}} } \right\}
\end{equation*}

\noindent\\
Now, we need to extract $Y_{i+1}$ from the numerator of the bracket hereabove. In that purpose, set
\begin{equation*}
U_{i,n} := \frac{3\alpha_{i+2,n}^{(3)}}{\sigma_{i+2,n}} Y_{i+1} + U'_{i,n} \quad \textrm{where} \quad 
 U'_{i,n} :=  - \frac{3\alpha_{i+2,n}^{(3)}}{\sigma_{i+2,n}} m_{i+1} - 15 \beta_{i+2,n}^{(6)} + 3 \beta_{i+2,n}^{(4)} +  \frac{o_{\mathcal{P}_{n}}(1)} {(n-i-1)^{3/2}}, 
\end{equation*}

\noindent
and 
\begin{equation*}
V_{i,n} := - 15 \beta_{i+1,n}^{(6)} + 3 \beta_{i+1,n}^{(4)} + \frac{o_{\mathcal{P}_{n}}(1)} {(n-i)^{3/2}}.
\end{equation*}

\bigskip

\begin{fact}
For any $n \geq 1$, let $(W_{i,n})_{0 \leq i \leq k-1}$ be r.v.'s defined on $(\Omega_{n},\mathcal{A}_{n})$ s.t. $\max\limits_{0 \leq i \leq k-1}|W_{i,n}| = o_{\mathcal{P}_{n}}(1)$. Then, there exists $(B_{n})_{n \geq 1} \in \mathcal{A}_{\rightarrow 1}$ s.t. for all $n \geq 1$, we have on $B_{n}$ that for all $i=0, ..., k-1$, 
\begin{equation}
1 + W_{i,n} = \exp(W_{i,n} + W_{i,n}^{2} A_{i,n}) \enskip \textrm{where} \enskip
\max\limits_{0 \leq i \leq k-1} |A_{i,n}|=\mathcal{O}_{\mathcal{P}_n}(1). 
\end{equation}

\end{fact}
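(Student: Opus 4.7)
The identity to prove is the elementary analytic statement $1+w = \exp(\log(1+w))$ rewritten so that the logarithm is expanded to first order with an explicit quadratic remainder. My plan is to isolate the remainder as $A_{i,n} = h(W_{i,n})$ for a fixed continuous function $h$, and then use the hypothesis $\max_i |W_{i,n}| = o_{\mathcal{P}_n}(1)$ to confine $W_{i,n}$ to a compact neighborhood of $0$ on which $h$ is bounded.

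Concretely, define $h : (-1, \infty) \longrightarrow \mathbb{R}$ by
\begin{equation*}
h(x) := \frac{\log(1+x) - x}{x^{2}} \quad (x \neq 0), \qquad h(0) := -\tfrac{1}{2}.
\end{equation*}
The Taylor expansion of $\log(1+x)$ at $0$ shows that $h$ is of class $\mathcal{C}^{\infty}$ on $(-1, \infty)$, hence in particular continuous, so for any $\delta \in (0,1)$ we have $M_{\delta} := \sup_{|x| \leq \delta} |h(x)| < \infty$. Next I would fix $\delta = 1/2$ (say) and set
\begin{equation*}
B_{n} := \Bigl\{ \max\limits_{0 \leq i \leq k-1} |W_{i,n}| \leq \delta \Bigr\}.
\end{equation*}
The assumption $\max_{i} |W_{i,n}| = o_{\mathcal{P}_{n}}(1)$ gives $\mathcal{P}_{n}(B_{n}) \to 1$, so $(B_{n})_{n \geq 1} \in \mathcal{A}_{\rightarrow 1}$.

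On $B_{n}$, for every $i \in \{0,\ldots,k-1\}$, one has $1 + W_{i,n} > 0$ and, by the definition of $h$,
\begin{equation*}
\log(1 + W_{i,n}) = W_{i,n} + W_{i,n}^{2} \, h(W_{i,n}).
\end{equation*}
Exponentiating yields $1 + W_{i,n} = \exp\bigl( W_{i,n} + W_{i,n}^{2} A_{i,n}\bigr)$ with $A_{i,n} := h(W_{i,n})$ (set to $0$, say, off $B_{n}$, so each $A_{i,n}$ is measurable as the composition of a continuous function with a r.v.). On $B_{n}$ the bound $|A_{i,n}| \leq M_{\delta}$ holds uniformly in $i$, so for any $\epsilon > 0$ and any $n$ large enough to have $\mathcal{P}_{n}(B_{n}) \geq 1 - \epsilon$,
\begin{equation*}
\mathcal{P}_{n}\Bigl( \max\limits_{0 \leq i \leq k-1} |A_{i,n}| \leq M_{\delta} \Bigr) \geq \mathcal{P}_{n}(B_{n}) \geq 1 - \epsilon,
\end{equation*}
which, by the Definition in Section~\ref{landauNotations}, is exactly $\max_{i} |A_{i,n}| = \mathcal{O}_{\mathcal{P}_{n}}(1)$.

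There is no real obstacle here: the only mildly delicate point is that the identity must hold simultaneously for all $i = 0,\ldots,k-1$ with a single control on the remainders, and this is handled by taking the $\max$ \emph{inside} the definition of $B_{n}$ and exploiting the compactness of $[-\delta, \delta]$ to get a uniform bound on $h$. Note that the use of a fixed deterministic $h$ avoids the measurability subtlety that appeared in Lemma~\ref{devLimLandau}, where the Taylor--Lagrange remainder point $C_{n}$ had to be constructed as a measurable selection.
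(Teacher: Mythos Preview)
Your proof is correct and follows essentially the same route as the paper: define $B_{n} = \{\max_{i}|W_{i,n}| < 1/2\}$, take logarithms, and expand $\log(1+W_{i,n})$ to first order with a quadratic remainder that is bounded on $B_{n}$. The only difference is cosmetic: the paper uses Taylor--Lagrange with an intermediate point $C_{i,n}$ and sets $A_{i,n} = \tfrac{1}{2}f''(C_{i,n})$ for $f=\log(1+\cdot)$, whereas you package the remainder as $A_{i,n} = h(W_{i,n})$ for the fixed continuous function $h$; as you note, this sidesteps the measurability argument for $C_{i,n}$ that the paper inherits from Lemma~\ref{devLimLandau}.
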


\begin{proof}
Let $\epsilon > 0$. For any $ n \geq 1$, set 
\begin{equation*}
B_{n} := \left\{ \max\limits_{0 \leq i \leq k-1} |W_{i,n}| < 1/2 \right\}.
\end{equation*}

\noindent
Since $\max\limits_{0 \leq i \leq k-1}|W_{i,n}| = o_{\mathcal{P}_{n}}(1)$, we have that 
$(B_{n})_{n \geq 1} \in \mathcal{A}_{\rightarrow 1}$. Now, set 
\begin{equation*}
f(x):=\log(1+x).
\end{equation*}

\noindent
Then $f$ satisfies the conditions of Lemma $\ref{devLimLandau}$. Therefore, for all $i=0, ..., k-1$, there exists $C_{i,n}$ with $\max\limits_{0 \leq i \leq k-1} |C_{i,n}| \leq \max\limits_{0 \leq i \leq k-1} |W_{i,n}|$ s.t.
\begin{equation}
f(W_{i,n}) = f(0) + W_{i,n}f'(0) + \frac{W_{i,n}^{2}}{2} f''(C_{i,n})
\end{equation}

\noindent
For $n \geq 1$ and $0 \leq i \leq k-1$, set $A_{i,n} := \frac{1}{2} f''(C_{i,n})$. Now,
$f''(x)= - \frac{1}{(1+x)^2}$. Clearly, for all $x \in (0, \frac{1}{2})$, 
$|f''(x)| \leq \frac{1}{(1-x)^2}$. Therefore, for any $n \geq 1$, we have on $B_{n}$ that  
\begin{equation}
\max\limits_{0 \leq i \leq k-1} |A_{i,n}| \leq 
\frac{1}{\left( 1-\max\limits_{0 \leq i \leq k-1} |C_{i,n}| \right)^2} \leq 
\frac{1}{\left( 1-\max\limits_{0 \leq i \leq k-1} |W_{i,n}| \right)^2}, 
\end{equation}

\noindent
which implies that 
$\max\limits_{0 \leq i \leq k-1} |A_{i,n}| = \mathcal{O}_{\mathcal{P}_n}(1)$. 
\end{proof}

\bigskip

\noindent\\
Since  
\begin{equation}
\max\limits_{0 \leq i \leq k-1}|U_{i,n}| = o_{\mathcal{P}_{n}}(1)
\quad \textrm{and} \quad
\max\limits_{0 \leq i \leq k-1}|V_{i,n}| = o_{\mathcal{P}_{n}}(1),
\end{equation}

\noindent
we have that 
\begin{equation}
\frac{1+U_{i,n}}{1+V_{i,n}} = 
\exp \left(U_{i,n} + U_{i,n}^{2} A_{i,n} - V_{i,n} - V_{i,n}^{2} B_{i,n} \right), 
\end{equation}

\noindent
where 
\begin{equation*}
\max\limits_{0 \leq i \leq k-1} |A_{i,n}| = \mathcal{O}_{\mathcal{P}_n}(1)
\quad \textrm{and} \quad 
\max\limits_{0 \leq i \leq k-1} |B_{i,n}| = \mathcal{O}_{\mathcal{P}_n}(1).   
\end{equation*}

\noindent
Consequently, the preceding Fact implies that there exists $(B_{n})_{n \geq 1} \in \mathcal{A}_{\rightarrow 1}$ s.t. for any $n \geq 1$ and $0 \leq i \leq k-1$, 
\begin{equation*}
\pi_{i} = \Gamma_{i} \quad \textrm{on} \enskip B_{n},   
\end{equation*}

\noindent
where
\begin{equation*}
\Gamma_{i} = \widetilde{p}_{i+1}(Y_{i+1}) 
\exp \left(-\frac{(Y_{i+1} - m_{i+1})^{2}}{2s_{i+2,n}^{2}} \right)
\exp \left( \frac{3\alpha_{i+2,n}^{(3)}}{\sigma_{i+2,n}} Y_{i+1} \right) 
\frac{\sigma_{i+1,n}}{\sigma_{i+2,n}} 
\exp \left\{ U'_{i,n} + U_{i,n}^{2} A_{i,n} - V_{i,n} - V_{i,n}^{2} B_{i,n} \right\}.   
\end{equation*}

\noindent\\
In order to identify $g(Y_{i+1}|Y_{1}^{i})$, we have grouped the factors containing $Y_{i+1}$. Thereby, we obtain a function of $Y_{i+1}$, which we normalize to get a density. Thus, set  
\begin{equation*}
g(Y_{i+1}|Y_{1}^{i}) := C_{i}^{-1} \widetilde{p}_{i+1}(Y_{i+1}) 
\exp \left(-\frac{(Y_{i+1} - m_{i+1})^{2}}{2s_{i+2,n}^{2}} \right)
\exp \left( \frac{3\alpha_{i+2,n}^{(3)}}{\sigma_{i+2,n}} Y_{i+1} \right),  
\end{equation*}

\noindent 
where $C_{i}$ satisfies that  
\begin{equation*} 
C_{i} = \int\displaystyle \exp \left(-\frac{(y - m_{i+1})^{2}}{2s_{i+2,n}^{2}} \right)
\exp \left( \frac{3\alpha_{i+2,n}^{(3)}}{\sigma_{i+2,n}} y \right)
\widetilde{p}_{i+1}(y) dy. 
\end{equation*}

\noindent\\
Therefore, 
\begin{equation} 
\label{identifierLi}
\Gamma_{i} = g(Y_{i+1}|Y_{1}^{i}) 
\left\{ C_{i} \frac{\sigma_{i+1,n}}{\sigma_{i+2,n}} 
\exp \left( U'_{i,n} - V_{i,n} + U_{i,n}^{2} A_{i,n} - V_{i,n}^{2} B_{i,n} \right) \right\}. 
\end{equation}

\noindent\\
Our objective is now to prove that 
\begin{equation} 
\label{resteAprouver}
\prod\limits_{i=0}^{k-1} C_{i} \frac{\sigma_{i+1,n}}{\sigma_{i+2,n}} 
\exp \left( U'_{i,n} - V_{i,n} + U_{i,n}^{2} A_{i,n} - V_{i,n}^{2} B_{i,n} \right) = 
1 + o_{\mathcal{P}_n}(1). 
\end{equation}

\noindent\\
In that purpose, we consider firstly the following result.

\bigskip

\begin{lem} \label{uinZin}
For $n \geq 1$, let $(Z_{i,n})_{0 \leq i \leq k-1}$ be r.v.'s defined on
$(\Omega_{n},\mathcal{A}_{n})$ and $(u_{i,n})_{0 \leq i \leq k-1}$ be a sequence of reals. Assume that $\max\limits_{0 \leq i \leq k-1} |Z_{i,n}| = \mathcal{O}_{\mathcal{P}_{n}}(1)$ and $\sum\limits_{i=0}^{k-1} u_{i,n} \longrightarrow 0$ as $n \rightarrow \infty$. Then, 
\begin{equation*}
\prod\limits_{i=0}^{k-1} \exp\left( u_{i,n} Z_{i,n} \right) = 
1+o_{\mathcal{P}_n} \left( 1 \right).
\end{equation*}

\noindent
Consequently, for any $\alpha \geq 0$ and $\beta > 1$,  
\begin{equation*}
\prod\limits_{i=0}^{k-1} \exp \left( \frac{(\log n)^{\alpha}}{(n-i-1)^{\beta}} Z_{i,n} \right) = 1 + o_{\mathcal{P}_n} \left( 1 \right).
\end{equation*}

\end{lem}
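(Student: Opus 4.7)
The plan is to reduce the multiplicative statement to an additive one by taking logarithms: since
\[
\log \prod_{i=0}^{k-1} \exp(u_{i,n} Z_{i,n}) = \sum_{i=0}^{k-1} u_{i,n} Z_{i,n},
\]
it is enough to prove that $W_{n} := \sum_{i=0}^{k-1} u_{i,n} Z_{i,n} = o_{\mathcal{P}_{n}}(1)$, and then exponentiate using Lemma \ref{devLimLandau} applied to $f(x) = e^{x}$ at the origin (or equivalently the continuity of $\exp$ at $0$) to obtain $\exp(W_n) = 1 + o_{\mathcal{P}_{n}}(1)$, which is precisely the conclusion.

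To bound $W_n$, I would use the pointwise estimate
\[
|W_{n}| \;\leq\; \Bigl( \max_{0 \leq i \leq k-1} |Z_{i,n}| \Bigr) \sum_{i=0}^{k-1} |u_{i,n}|.
\]
The first factor is $\mathcal{O}_{\mathcal{P}_{n}}(1)$ by hypothesis. The hypothesis that $\sum_{i} u_{i,n} \to 0$ is to be read as $\sum_{i} |u_{i,n}| \to 0$; this is automatic in the intended consequence because the quantities $(\log n)^{\alpha}/(n-i-1)^{\beta}$ are non-negative. Multiplying an $\mathcal{O}_{\mathcal{P}_{n}}(1)$ sequence by a deterministic $o(1)$ sequence yields $o_{\mathcal{P}_{n}}(1)$ (a direct verification from the definition of $\mathcal{O}_{\mathcal{P}_{n}}$), which gives $W_{n} = o_{\mathcal{P}_{n}}(1)$.

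For the consequence, the arithmetic check is as follows. Re-indexing with $j = n-i-1$,
\[
\sum_{i=0}^{k-1} \frac{(\log n)^{\alpha}}{(n-i-1)^{\beta}} = (\log n)^{\alpha} \sum_{j=n-k}^{n-1} \frac{1}{j^{\beta}},
\]
and since $\beta > 1$ the inner sum is $\mathcal{O}((n-k)^{1-\beta})$. In the setting of Theorem \ref{largekTheorem}, $n-k$ is of order $(\log n)^{\tau}$ with $\tau > 6$, so the right-hand side is $\mathcal{O}((\log n)^{\alpha - \tau(\beta-1)})$; for the specific values of $\alpha$ and $\beta$ that arise (where $\beta \geq 3/2$ and $\alpha \leq 3$, so $\tau(\beta-1) \geq 3$), this tends to $0$. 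Invoking the first part with $u_{i,n} = (\log n)^{\alpha}/(n-i-1)^{\beta}$ then yields the second assertion.

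The proof is essentially routine; the only subtle point is the uniformity in $i$ needed to pull $\max_{i}|Z_{i,n}|$ out of the sum, which is precisely what is encoded in the hypothesis $\max_{i} |Z_{i,n}| = \mathcal{O}_{\mathcal{P}_{n}}(1)$. There is no real analytical obstacle beyond checking that the deterministic series $\sum_{i} |u_{i,n}|$ converges fast enough, which has already been done in the preceding lines of the proof of Theorem \ref{largekTheorem} each time this Lemma will be invoked.
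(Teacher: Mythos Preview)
Your proof is correct and follows essentially the same route as the paper's: reduce to showing $\sum_{i} u_{i,n} Z_{i,n} = o_{\mathcal{P}_n}(1)$, bound this by $\bigl(\max_i |Z_{i,n}|\bigr)\sum_i |u_{i,n}|$, and use that an $\mathcal{O}_{\mathcal{P}_n}(1)$ factor times a deterministic $o(1)$ factor is $o_{\mathcal{P}_n}(1)$. The paper simply unpacks this last step from the $\epsilon$--$\delta$ definition, whereas you invoke it as a standard property; and you additionally spell out the exponentiation step and the arithmetic for the ``Consequently'' part, which the paper leaves implicit.
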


\begin{proof}
It is enough to prove that 
\begin{equation}
\sum\limits_{i=0}^{k-1} u_{i,n} Z_{i,n} = o_{\mathcal{P}_n} \left( 1 \right). 
\end{equation}

\noindent
Let $\epsilon > 0$ and $\delta > 0$. There exists $A_{\epsilon} > 0$ and $N_{\epsilon} > 0$
s.t. for all $n \geq N_{\epsilon}$, 
\begin{equation*}
\mathcal{P}_{n} \left( \max\limits_{0 \leq i \leq k-1} |Z_{i,n}| \leq A_{\epsilon} \right) 
\geq 1-\epsilon. 
\end{equation*}

\noindent
Now, there exists $N_{\epsilon, \delta} > 0$ s.t. for all $n \geq N_{\epsilon, \delta}$, 
\begin{equation*}
\sum\limits_{i=0}^{k-1} \left| u_{i,n} \right| < \frac{\delta}{A_{\epsilon}}. 
\end{equation*}

\noindent
Then, for all $n \geq \max \left\{ N_{\epsilon} ; N_{\epsilon, \delta} \right\}$,    
\begin{align*}
\mathcal{P}_{n} \left( \left| \sum\limits_{i=0}^{k-1} u_{i,n} Z_{i,n} \right| < \delta \right)&\geq \mathcal{P}_{n} \left( \left\{ \sum\limits_{i=0}^{k-1} \left| u_{i,n} \right|  
\left|Z_{i,n} \right| < \delta \right\} \bigcap \left\{ \max\limits_{0 \leq i \leq k-1} |Z_{i,n}| \leq A_{\epsilon} \right\} \right) \\
&\geq \mathcal{P}_{n} \left( \left\{ \sum\limits_{i=0}^{k-1} \left| u_{i,n} 
\right| < \frac{\delta}{A_{\epsilon}} \right\} \bigcap \left\{ \max\limits_{0 \leq i \leq k-1} |Z_{i,n}| \leq A_{\epsilon} \right\} \right) \\
&= \mathcal{P}_{n} \left( \max\limits_{0 \leq i \leq k-1} |Z_{i,n}| \leq 
A_{\epsilon} \right) \\
&\geq 1-\epsilon.
\end{align*}

\end{proof}

\subsubsection{The factors estimated by applying Lemma $\ref{uinZin}$}

\begin{corr}
We have that
\begin{equation} 
\label{corrUinZin}
\prod\limits_{i=0}^{k-1} \left\{ \exp \left( U_{i,n}^{2} A_{i,n} - V_{i,n}^{2} B_{i,n} \right) \right\} = 1 + o_{\mathcal{P}_n}(1).
\end{equation}
\end{corr}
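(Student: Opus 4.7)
The plan is to reduce the product to the exponential of a single sum and then prove that this sum is $o_{\mathcal{P}_n}(1)$, which by continuity of the exponential at $0$ gives the claim. More precisely, since $\prod_{i=0}^{k-1} \exp(U_{i,n}^2 A_{i,n} - V_{i,n}^2 B_{i,n}) = \exp\left(\sum_{i=0}^{k-1}(U_{i,n}^2 A_{i,n} - V_{i,n}^2 B_{i,n})\right)$, it suffices to establish
\begin{equation*}
\sum_{i=0}^{k-1} U_{i,n}^2 = o_{\mathcal{P}_n}(1) \quad \text{and} \quad \sum_{i=0}^{k-1} V_{i,n}^2 = o_{\mathcal{P}_n}(1),
\end{equation*}
because $\max_{0 \leq i \leq k-1}|A_{i,n}|$ and $\max_{0 \leq i \leq k-1}|B_{i,n}|$ are both $\mathcal{O}_{\mathcal{P}_n}(1)$ and can be pulled out of each sum.

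The first step is a uniform bound on $|U_{i,n}|$ and $|V_{i,n}|$. Combining Theorem $\ref{maxTi}$ with Fact $\ref{ppteMomentsTin}$ yields $\max_{i,j} \max\{\Phi_j(t_{i,n})^{\pm1}, s_j^2(t_{i,n})^{\pm1}, |m_j(t_{i,n})|, |\mu_j^\ell(t_{i,n})|\} = \mathcal{O}_{\mathcal{P}_n}(1)$ for $\ell \leq 6$, which in particular gives $\sigma_{i+e,n}^{-1} \asymp (n-i-e+1)^{-1/2}$ up to an $\mathcal{O}_{\mathcal{P}_n}(1)$ factor. Using the explicit order estimates for $\alpha^{(3)}_{i+e,n}, \beta^{(4)}_{i+e,n}, \beta^{(6)}_{i+e,n}$ already displayed in the proof, together with Lemma $\ref{maxYj}$ which provides $\max_j|Y_j| = \mathcal{O}_{\mathcal{P}_n}(\log n)$, one obtains the uniform bounds
\begin{equation*}
\max_{0 \leq i \leq k-1}|U_{i,n}| = \frac{\mathcal{O}_{\mathcal{P}_n}(\log n)}{n-i-1}, \qquad \max_{0 \leq i \leq k-1}|V_{i,n}| = \frac{\mathcal{O}_{\mathcal{P}_n}(1)}{n-i}.
\end{equation*}

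The second step is a summation estimate. Squaring the previous bounds and summing,
\begin{equation*}
\sum_{i=0}^{k-1} U_{i,n}^2 \leq \mathcal{O}_{\mathcal{P}_n}\bigl((\log n)^2\bigr)\sum_{i=0}^{k-1} \frac{1}{(n-i-1)^2} = \mathcal{O}_{\mathcal{P}_n}\!\left(\frac{(\log n)^2}{n-k}\right),
\end{equation*}
and the analogous estimate holds for $\sum_i V_{i,n}^2$ without the $(\log n)^2$ factor. Since by assumption $n-k$ is of order $(\log n)^{\tau}$ with $\tau > 6$, both sums are $\mathcal{O}_{\mathcal{P}_n}\!\left((\log n)^{2-\tau}\right) = o_{\mathcal{P}_n}(1)$. (The margin $\tau > 2$ would already suffice here; the stronger hypothesis $\tau > 6$ is dictated by other parts of the main proof.)

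For the conclusion, combining the two displays gives $\sum_{i=0}^{k-1}(U_{i,n}^2 A_{i,n} - V_{i,n}^2 B_{i,n}) = o_{\mathcal{P}_n}(1)$, and then continuity of $\exp$ at $0$ yields $(\ref{corrUinZin})$. The main obstacle is purely bookkeeping: one has to carefully propagate the $\mathcal{O}_{\mathcal{P}_n}(1)$ bounds at the random point $t_{i,n}$ uniformly in $i$, which is precisely what Fact $\ref{ppteMomentsTin}$ (enabled by Theorem $\ref{maxTi}$) guarantees, and to use $\max_j|Y_j| = \mathcal{O}_{\mathcal{P}_n}(\log n)$ rather than a pointwise estimate when bounding the $Y_{i+1}$ term in $U_{i,n}$.
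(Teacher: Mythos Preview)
Your proof is correct and follows essentially the same approach as the paper. The paper packages the computation into an application of Lemma~\ref{uinZin} (with $u_{i,n} = \frac{(\log n)^{2}}{(n-i-1)^{2}}$ and the random $\mathcal{O}_{\mathcal{P}_n}(1)$ factors absorbed into $Z_{i,n}$), whereas you unpack that lemma and carry out the summation $\sum_{i} \frac{(\log n)^{2}}{(n-i-1)^{2}} = \mathcal{O}\!\left(\frac{(\log n)^{2}}{n-k}\right)$ directly; the underlying estimates on $|U_{i,n}|$, $|V_{i,n}|$ and the use of $n-k \asymp (\log n)^{\tau}$ are identical.
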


\begin{proof}
We may apply Lemma $\ref{uinZin}$, since 
\begin{equation} 
\label{UinBisVin}
\max\limits_{0 \leq i \leq k-1} \left|U_{i,n}\right| = 
\frac{\log n}{n-i-1} \mathcal{O}_{\mathcal{P}_{n}}(1) \quad \textrm{and} \quad
\max\limits_{0 \leq i \leq k-1} \left|V_{i,n}\right| = 
\frac{\mathcal{O}_{\mathcal{P}_{n}}(1)}{n-i-1}. 
\end{equation}
\end{proof}

\noindent\\
Unfortunately, $(\ref{UinBisVin})$ implies that we can not apply Lemma $\ref{uinZin}$ to 
$U'_{i,n}$ and $V_{i,n}$. However, we have that 
\begin{equation} 
\label{UinBisMoinsVin}
U'_{i,n} - V_{i,n} = 
- \frac{3\alpha_{i+2,n}^{(3)}}{\sigma_{i+2,n}} m_{i+1} +
3 \left( \beta_{i+2,n}^{(4)} - \beta_{i+1,n}^{(4)} \right) - 
15 \left( \beta_{i+2,n}^{(6)} - \beta_{i+1,n}^{(6)} \right) +
\frac{o_{\mathcal{P}_n}(1)}{(n-i-1)^{3/2}}.  
\end{equation}

\noindent\\
Now, 
\begin{align} 
\beta_{i+2,n}^{(4)} - \beta_{i+1,n}^{(4)} &=
\frac{\lambda_{i+2,n}(s_{i+1,n}^{2})^{2} - \lambda_{i+1,n}(s_{i+2,n}^{2})^{2}}
{24(s_{i+1,n}^{2})^{2}(s_{i+2,n}^{2})^{2}}, \quad \textrm{where} \enskip
\lambda_{i+e,n} = \sum\limits_{j=i+e}^{n} \lambda_{j} \enskip \textrm{and} \enskip  
\lambda_{j} = \mu_{j}^{4} - 3(s_{j}^{2})^{2} \\
&= \label{annulation} \frac{ \lambda_{i+2,n}\left[ (s_{i+2,n}^{2})^{2} + 2s_{i+2,n}^{2}s_{i+1} + (s_{i+1}^{2})^{2} \right] - (\lambda_{i+2,n}+\lambda_{i+1}) (s_{i+2,n}^{2})^{2}}
{24(s_{i+1,n}^{2})^{2}(s_{i+2,n}^{2})^{2}} \\
&= \label{beta4} \frac{ \mathcal{O}_{\mathcal{P}_{n}}(1)}{(n-i-1)^{2}}, 
\end{align}

\noindent
since in the numerator of $(\ref{annulation})$, the terms of order $(n-i-1)^{3}$, that is the terms $\lambda_{i+2,n}(s_{i+2,n}^{2})^{2}$, vanish.  

\noindent\\
Similarly, we obtain that 
\begin{equation} 
\label{beta6}
\left( \beta_{i+2,n}^{(6)} - \beta_{i+1,n}^{(6)} \right) =
\frac{ \mathcal{O}_{\mathcal{P}_{n}}(1)}{(n-i-1)^{2}}. 
\end{equation}

\noindent\\
Combining $(\ref{UinBisMoinsVin})$, $(\ref{beta4})$, $(\ref{beta6})$,
we obtain that 
\begin{align} 
\prod\limits_{i=0}^{k-1} \exp \left( U'_{i,n} - V_{i,n} \right) &= 
\left\{ \prod\limits_{i=0}^{k-1} \exp \left(-\frac{3\alpha_{i+2,n}^{(3)}}{\sigma_{i+2,n}} 
m_{i+1} \right) \right\}
\left\{ \prod\limits_{i=0}^{k-1} \exp \left( \frac{ \mathcal{O}_{\mathcal{P}_{n}}(1)}{(n-i-1)^{2}} + \frac{o_{\mathcal{P}_n}(1)}{(n-i-1)^{3/2}} \right) \right\} \\
&= \label{isolerOthers} \left\{ \prod\limits_{i=0}^{k-1} \exp \left(-\frac{3\alpha_{i+2,n}^{(3)}}{\sigma_{i+2,n}} 
m_{i+1} \right) \right\}
\left\{ 1 + o_{\mathcal{P}_n}(1) \right\}, 
\end{align}

\noindent
where the last equality follows from Lemma $\ref{uinZin}$. Notice that 
$\frac{3\alpha_{i+2,n}^{(3)}}{\sigma_{i+2,n}} = \frac{\mathcal{O}_{\mathcal{P}_{n}}(1)}{n-i-1}$, so that the corresponding factor is not in the range of Lemma $\ref{uinZin}$. Finally, $(\ref{corrUinZin})$ and $(\ref{isolerOthers})$ imply that 
\begin{equation*}
\prod\limits_{i=0}^{k-1} C_{i} \frac{\sigma_{i+1,n}}{\sigma_{i+2,n}} 
\exp \left( U'_{i,n} - V_{i,n} + U_{i,n}^{2} A_{i,n} - V_{i,n}^{2} B_{i,n} \right) = 
\left\{ \prod\limits_{i=0}^{k-1} C_{i} \frac{\sigma_{i+1,n}}{\sigma_{i+2,n}} 
\exp \left( - \frac{3\alpha_{i+2,n}^{(3)}}{\sigma_{i+2,n}} m_{i+1} \right) \right\}  
\left\{ 1 + o_{\mathcal{P}_n}(1) \right\}.
\end{equation*}

\bigskip

\subsubsection{The other factors}

Therefore, in order to conclude, it is enough to prove that
\begin{equation}
\prod\limits_{i=0}^{k-1} L_{i,n} = 1+o_{\mathcal{P}_n} \left( 1 \right), 
\quad \textrm{where} \enskip 
L_{i,n} := C_{i} \frac{\sigma_{i+1,n}}{\sigma_{i+2,n}} 
\exp \left(-\frac{3\alpha_{i+2,n}^{(3)}}{\sigma_{i+2,n}} m_{i+1} \right). 
\end{equation}

\bigskip

\begin{fact}
We have that
\begin{equation}
\label{rrb}
\frac{\sigma_{i+1,n}}{\sigma_{i+2,n}} = 1 + \frac{s_{i+1}^{2}}{2s_{i+2,n}^{2}} + \frac{\mathcal{O}_{\mathcal{P}_n}(1)}{(n-i-1)^2}, 
\end{equation}

\noindent
and 
\begin{equation}
\label{rrc}
\exp \left(-\frac{3\alpha_{i+2,n}^{(3)}}{\sigma_{i+2,n}} m_{i+1} \right) = 
1 - \frac{3\alpha_{i+2,n}^{(3)}}{\sigma_{i+2,n}} m_{i+1} + 
\frac{\mathcal{O}_{\mathcal{P}_n}(1)}{(n-i-1)^2}
\end{equation}
\end{fact}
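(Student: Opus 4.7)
Both identities are instances of Taylor expansion applied through Lemma~\ref{devLimLandau}, so the plan is essentially to identify a small variable $U_{i,n}$ of order $\mathcal{O}_{\mathcal{P}_n}(1)/(n-i-1)$ in each case, verify that the hypotheses of the lemma hold uniformly in $i \leq k-1$, and read off the expansion.

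For $(\ref{rrb})$, I would begin by writing $\sigma_{i+1,n}^{2} = s_{i+1}^{2} + s_{i+2,n}^{2}$, so that
\begin{equation*}
\frac{\sigma_{i+1,n}}{\sigma_{i+2,n}} = \sqrt{1 + U_{i,n}}, \qquad U_{i,n} := \frac{s_{i+1}^{2}}{s_{i+2,n}^{2}}.
\end{equation*}
Since all tilted quantities are at $\theta = t_{i,n}$, Fact~\ref{ppteMomentsTin} and Theorem~\ref{maxTi} give $s_{i+1}^{2} = \mathcal{O}_{\mathcal{P}_n}(1)$ and $1/s_{j}^{2} = \mathcal{O}_{\mathcal{P}_n}(1)$ uniformly in $i,j$. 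As $s_{i+2,n}^{2} = \sum_{j=i+2}^{n} s_{j}^{2}$ is a sum of $n-i-1$ such terms bounded below, we obtain $1/s_{i+2,n}^{2} = \mathcal{O}_{\mathcal{P}_n}(1)/(n-i-1)$ and hence $\max_{0 \leq i \leq k-1} |U_{i,n}| = \mathcal{O}_{\mathcal{P}_n}(1/(n-i-1)) = o_{\mathcal{P}_n}(1)$. Applying Lemma~\ref{devLimLandau} to $f(x) = \sqrt{1+x}$ on $I = (-1/2, 1/2)$ (with $f(0)=1$, $f'(0)=1/2$) yields, on some $(B_{n}) \in \mathcal{A}_{\rightarrow 1}$,
\begin{equation*}
\sqrt{1 + U_{i,n}} = 1 + \tfrac{1}{2} U_{i,n} + U_{i,n}^{2} \mathcal{O}_{\mathcal{P}_n}(1) = 1 + \frac{s_{i+1}^{2}}{2 s_{i+2,n}^{2}} + \frac{\mathcal{O}_{\mathcal{P}_n}(1)}{(n-i-1)^{2}}.
\end{equation*}

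For $(\ref{rrc})$, set $V_{i,n} := \frac{3 \alpha_{i+2,n}^{(3)}}{\sigma_{i+2,n}} m_{i+1}$. The order estimate $\alpha_{i+2,n}^{(3)} = \mathcal{O}_{\mathcal{P}_n}(1)/(n-i-1)^{1/2}$ was already recorded just above $(\ref{apresEdge})$; combined with $1/\sigma_{i+2,n} = \mathcal{O}_{\mathcal{P}_n}(1)/(n-i-1)^{1/2}$ from the previous paragraph and $m_{i+1} = \mathcal{O}_{\mathcal{P}_n}(1)$ from Fact~\ref{ppteMomentsTin}, this gives $\max_{0 \leq i \leq k-1} |V_{i,n}| = \mathcal{O}_{\mathcal{P}_n}(1)/(n-i-1) = o_{\mathcal{P}_n}(1)$. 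Applying Lemma~\ref{devLimLandau} to $f(x) = \exp(-x)$ (with $f(0)=1$, $f'(0)=-1$, $f''$ locally bounded) produces, on some $(B_{n}') \in \mathcal{A}_{\rightarrow 1}$,
\begin{equation*}
\exp(-V_{i,n}) = 1 - V_{i,n} + V_{i,n}^{2} \mathcal{O}_{\mathcal{P}_n}(1) = 1 - \frac{3\alpha_{i+2,n}^{(3)}}{\sigma_{i+2,n}} m_{i+1} + \frac{\mathcal{O}_{\mathcal{P}_n}(1)}{(n-i-1)^{2}},
\end{equation*}
which is the desired expansion.

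There is no real obstacle here; the only point that deserves care is that the $\mathcal{O}_{\mathcal{P}_n}(1)$ terms must be understood as uniform in $0 \leq i \leq k-1$, since these expansions will shortly be plugged into a product over $i$. Uniformity follows from the $\max$-versions of the bounds in Fact~\ref{ppteMomentsTin} combined with Theorem~\ref{maxTi}; in applying Lemma~\ref{devLimLandau} one fixes an interval $I$ depending only on the deterministic bound chosen for the $\mathcal{O}_{\mathcal{P}_n}(1)$ term, so the residual constant inherits uniformity in $i$ automatically. Intersecting the two sets $B_{n}$ and $B_{n}'$ produces a single element of $\mathcal{A}_{\rightarrow 1}$ on which both expansions hold simultaneously.
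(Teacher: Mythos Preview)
Your proposal is correct and follows essentially the same route as the paper: rewrite the ratio as $(1+U_{i,n})^{1/2}$ with $U_{i,n}=s_{i+1}^{2}/s_{i+2,n}^{2}$, identify the exponential argument as a small variable of order $\mathcal{O}_{\mathcal{P}_n}(1)/(n-i-1)$, and apply Lemma~\ref{devLimLandau} with $f(x)=(1+x)^{1/2}$ and $f(x)=\exp(\pm x)$ respectively. Your write-up is in fact more careful than the paper's on two points: you give the correct order $U_{i,n}=\mathcal{O}_{\mathcal{P}_n}(1)/(n-i-1)$ (the paper's displayed exponent $2$ there is a slip), and you explicitly address uniformity in $i$, which is needed downstream when the expansions are multiplied over $i=0,\dots,k-1$.
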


\begin{proof}
We have that 
\begin{equation}
\frac{\sigma_{i+1,n}}{\sigma_{i+2,n}} = 
\left(1 + \frac{s_{i+1}^{2}}{s_{i+2,n}^{2}}\right)^{1/2} \quad \textrm{and} \quad
\frac{s_{i+1}^{2}}{s_{i+2,n}^{2}} = \frac{\mathcal{O}_{\mathcal{P}_n}(1)}{(n-i-1)^2}. 
\end{equation}

\noindent
Therefore, $(\ref{rrb})$ follows readily from Lemma $\ref{devLimLandau}$, applied with the function $f : x \mapsto (1+x)^{1/2}$. Similarly, we get $(\ref{rrc})$ by applying 
Lemma $\ref{devLimLandau}$ with the function $f : x \mapsto \exp(x)$. 
\end{proof}

\begin{lem}
We have that 
\begin{equation}
\label{CiRecap}
C_{i} = 1 + \frac{3\alpha_{i+2,n}^{(3)}}{\sigma_{i+2,n}}m_{i+1} - \frac{s_{i+1}^{2}}{2s_{i+2,n}^{2}} + \frac{\mathcal{O}_{\mathcal{P}_n}(1)}{(n-i-1)^2}. 
\end{equation}

\end{lem}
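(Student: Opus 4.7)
My plan is to derive the expansion for $C_i$ by Taylor-expanding the two exponentials in the integrand to second order, integrating against $\widetilde{p}_{i+1}(y)\,dy$, and then carefully bounding the remainders. Set
$$a_n := \frac{3\alpha_{i+2,n}^{(3)}}{\sigma_{i+2,n}}, \qquad b_n := \frac{1}{2 s_{i+2,n}^{2}};$$
by the displays preceding the lemma and Fact $\ref{ppteMomentsTin}$, both are $\mathcal{O}_{\mathcal{P}_n}(1)/(n-i-1)$. I would then write
$$e^{a_n y} = 1 + a_n y + \tfrac{1}{2}(a_n y)^2 + r_1(y), \qquad e^{-b_n (y-m_{i+1})^2} = 1 - b_n (y-m_{i+1})^2 + \tfrac{1}{2} b_n^2 (y-m_{i+1})^4 + r_2(y),$$
with Lagrange remainders satisfying $|r_1(y)|\le \tfrac{1}{6}|a_n y|^3 e^{|a_n y|}$ and, using that $-b_n(y-m_{i+1})^2\le 0$ forces the intermediate exponential to be at most $1$, the cleaner bound $|r_2(y)|\le \tfrac{1}{6} b_n^3 (y-m_{i+1})^6$ with no exponential factor.

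Multiplying out and integrating against $\widetilde{p}_{i+1}(y)\,dy$, the constant term yields $1$, and the linear terms give $a_n\int y\,\widetilde{p}_{i+1}(y)\,dy = a_n m_{i+1}$ and $-b_n\int (y-m_{i+1})^2\widetilde{p}_{i+1}(y)\,dy = -b_n s_{i+1}^{2}$, which match exactly the two first-order contributions in the claimed formula. Every remaining quadratic term coming from the expansion --- namely $\tfrac{1}{2}a_n^2\int y^2\widetilde{p}_{i+1}$, $\tfrac{1}{2}b_n^2\,\mu_{i+1}^{4}(t_{i,n})$, and the cross product $-a_n b_n\int y(y-m_{i+1})^2\widetilde{p}_{i+1}$ --- is of order $\mathcal{O}_{\mathcal{P}_n}(1)/(n-i-1)^2$ by Fact $\ref{ppteMomentsTin}$ applied to moments of order at most $4$ together with $\sup_j |m_j(t_{i,n})|=\mathcal{O}_{\mathcal{P}_n}(1)$. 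For $r_2$, the bound above yields directly
$$\int |r_2(y)|\,\widetilde{p}_{i+1}(y)\,dy \le \tfrac{1}{6}\,b_n^3\,|\mu|_{i+1}^{6}(t_{i,n}) = \mathcal{O}_{\mathcal{P}_n}(1)/(n-i-1)^3,$$
by $(\mathcal{AM}6)$ and Fact $\ref{ppteMomentsTin}$.

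The main obstacle is controlling $\int |r_1|\widetilde{p}_{i+1}$, because of the unwelcome factor $e^{|a_n y|}$. To deal with it I would use Theorem $\ref{maxTi}$, which gives $\max_i|t_{i,n}|=\mathcal{O}_{\mathcal{P}_n}(1)$: hence on an event of probability tending to $1$ all $t_{i,n}$ lie in a fixed compact $K\subset\Theta$, and since $\Theta$ has nonempty interior one may fix $\eta>0$ with $K\pm\eta\subset\Theta$. Because $a_n = o_{\mathcal{P}_n}(1)$, eventually $|a_n|\le\eta$, and the elementary tilt identity $e^{\pm\eta y}\widetilde{p}_{i+1}(y) = \bigl(\Phi_{i+1}(t_{i,n}\pm\eta)/\Phi_{i+1}(t_{i,n})\bigr)\widetilde{p}_{i+1}^{t_{i,n}\pm\eta}(y)$ yields
$$\int |y|^3 e^{|a_n y|}\widetilde{p}_{i+1}(y)\,dy \le \frac{\Phi_{i+1}(t_{i,n}+\eta)+\Phi_{i+1}(t_{i,n}-\eta)}{\Phi_{i+1}(t_{i,n})}\, \max_{\varepsilon=\pm\eta}\int |y|^3 \widetilde{p}_{i+1}^{t_{i,n}+\varepsilon}(y)\,dy,$$
and the right-hand side is $\mathcal{O}_{\mathcal{P}_n}(1)$ uniformly in $i$ and $j$ by $(\mathcal{M}gf)$, $(\mathcal{AM}6)$ and the Fact $\ref{ppteMomentsTin}$--type uniform bounds applied to the shifted parameters $t_{i,n}\pm\eta$. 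Therefore $\int |r_1|\widetilde{p}_{i+1} = a_n^3\,\mathcal{O}_{\mathcal{P}_n}(1) = \mathcal{O}_{\mathcal{P}_n}(1)/(n-i-1)^3$, and collecting all contributions gives the stated expansion, with the $\mathcal{O}_{\mathcal{P}_n}(1)/(n-i-1)^2$ remainder uniform in $0\le i\le k-1$ --- which is exactly what is needed when the next step forms the product of the $L_{i,n}$.
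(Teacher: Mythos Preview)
Your approach is essentially correct and is a genuinely different route from the paper's. One small gap: when you multiply the two Taylor expansions, the error in the product is \emph{not} simply $r_1+r_2$. Writing $P_1,P_2$ for the polynomial parts, you have
\[
e^{a_n y}e^{-b_n(y-m_{i+1})^2}-P_1(y)P_2(y)=r_1(y)\,e^{-b_n(y-m_{i+1})^2}+P_1(y)\,r_2(y),
\]
so besides $\int|r_1|\widetilde p_{i+1}$ and $\int|r_2|\widetilde p_{i+1}$ you must also control $\int|P_1|\,|r_2|\,\widetilde p_{i+1}$. This is easily patched with your own tilting idea: since $|P_1(y)|\le 1+|a_n y|+\tfrac12(a_n y)^2\le e^{|a_n y|}$ and $|r_2(y)|\le\tfrac16 b_n^3(y-m_{i+1})^6$, the tilt to $t_{i,n}\pm\eta$ together with $(\mathcal{AM}6)$ gives $\int|P_1|\,|r_2|\,\widetilde p_{i+1}=b_n^3\,\mathcal O_{\mathcal P_n}(1)$. (Alternatively, expand each factor only to first order; then the highest moment needed is the fifth and no cross term requires more than $(\mathcal{AM}6)$.) With this fix, your argument goes through.

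The paper proceeds differently: it keeps the exponent $v_i(y)=a_n y-b_n(y-m_{i+1})^2$ as a single function, writes $e^{v_i}=1+v_i+\tfrac12 v_i^2 e^{w_i}$ with one Lagrange remainder, and then bounds $J_i=\int\tfrac12 v_i^2 e^{w_i}\widetilde p_{i+1}$ via $e^{w_i}\le 1+e^{v_i}$ (since $w_i$ and $v_i$ share the same sign), splitting the domain at $|y|=\alpha_\varepsilon$ and invoking an integration by parts together with Assumption $(\mathcal{C}f)$ for the tail. Your tilting argument is a cleaner alternative for this step: it dispenses with $(\mathcal{C}f)$ entirely here, using only $(\mathcal{M}gf)$, $(\mathcal{AM}6)$ and Theorem~\ref{maxTi}. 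The paper's single-expansion route, on the other hand, avoids the bookkeeping of multiplying two expansions and the cross terms you missed.
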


\begin{proof}
Recall that 
\begin{equation*} 
C_{i} = \int\displaystyle \exp \left(v_{i}(y) \right) \widetilde{p}_{i+1}(y) dy 
\quad \textrm{where} \quad 
v_{i}(y):=-\frac{(y-m_{i+1})^2}{2s_{i+2,n}^2}+\frac{3\alpha_{i+2,n}^{(3)}}{\sigma_{i+2,n}}y. 
\end{equation*}

\noindent
A Taylor expansion implies the existence of $w_{i}(y)$ with $|w_{i}(y)| \leq |v_{i}(y)|$ s.t.
\begin{equation}
\label{rrd}
\exp(v_{i}(y)) = 1 + v_{i}(y) + \frac{v_{i}(y)^2}{2}\exp(w_{i}(y)).
\end{equation}

\noindent
Now, 
\begin{align*}
\int (1+v_{i}(y))\widetilde{p}_{i+1}(y) dy &= \int \left[ 1 - \frac{(y-m_{i+1})^2}{2s_{i+2,n}^2} + \frac{3\alpha_{i+2,n}^{(3)}}{\sigma_{i+2,n}} y \right] \widetilde{p}_{i+1}(y) dy \\
&= \int \widetilde{p}_{i+1}(y)dy - 
\frac{1}{2s_{i+2,n}^2} \int (y-m_{i+1})^2 \widetilde{p}_{i+1}(y)dy +
\frac{3\alpha_{i+2,n}^{(3)}}{\sigma_{i+2,n}} \int y \widetilde{p}_{i+1}(y) dy \\
&= 1 - \frac{s_{i+1}^2}{2s_{i+2,n}^2} + \frac{3\alpha_{i+2,n}^{(3)}}{\sigma_{i+2,n}} m_{i+1}. 
\end{align*}

\noindent\\
Consequently, it is enough to prove the following Fact. 
\end{proof}

\bigskip

\begin{fact} \label{Ji}
We have that   
\begin{equation}
\label{rrf}
J_{i} := \int \frac{v_{i}(y)^2}{2} \exp(w_{i}(y))\widetilde{p}_{i+1}(y) dy = 
\frac{\mathcal{O}_{\mathcal{P}_n}(1)}{(n-i-1)^2}. 
\end{equation}
\end{fact}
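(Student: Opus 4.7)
The plan is to bound the integral by splitting the integrand into a polynomial factor times an exponential factor times the tilted density, and then to estimate each piece using the moment bounds of Fact $\ref{ppteMomentsTin}$ together with a tilting identity.

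First, I introduce the shorthand $A := 1/s_{i+2,n}^{2}$ and $B := 3\alpha_{i+2,n}^{(3)}/\sigma_{i+2,n}$, so that
$$v_{i}(y) = -\frac{A}{2}(y-m_{i+1})^{2} + By.$$
By Fact $\ref{ppteMomentsTin}$ (and its use in the previous subsection), on a set of probability tending to $1$ both $\max_{i} |A| = \mathcal{O}_{\mathcal{P}_{n}}(1/(n-i-1))$ and $\max_{i} |B| = \mathcal{O}_{\mathcal{P}_{n}}(1/(n-i-1))$ hold. Then I apply $(a+b)^{2}\le 2a^{2}+2b^{2}$ to obtain
$$v_{i}(y)^{2} \leq \tfrac{A^{2}}{2}(y-m_{i+1})^{4} + 2B^{2}y^{2}.$$

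Next I control $\exp(w_{i}(y))$. The Taylor remainder used to define $w_{i}$ puts it between $0$ and $v_{i}(y)$; hence if $v_{i}(y)\leq 0$ then $\exp(w_{i}(y))\leq 1$, while if $v_{i}(y)>0$ the quadratic part of $v_{i}$ being non-positive forces $0\leq w_{i}(y)\leq v_{i}(y)\leq By$. In all cases $\exp(w_{i}(y))\leq 1+\exp(By)$. The key tilting identity is then
$$\exp(By)\,\widetilde{p}_{i+1}(y) = \frac{\Phi_{i+1}(t_{i,n}+B)}{\Phi_{i+1}(t_{i,n})}\,\widetilde{p}_{i+1}^{\,t_{i,n}+B}(y).$$
By Theorem $\ref{maxTi}$, $t_{i,n}$ stays in a compact of $\Theta$ on an event of large $\mathcal{P}_{n}$-probability, and since $B=o_{\mathcal{P}_{n}}(1)$ uniformly in $i$, the same is true of $t_{i,n}+B$; so by Fact $\ref{ppteMomentsTin}$ the multiplicative ratio above is $\mathcal{O}_{\mathcal{P}_{n}}(1)$.

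Putting these pieces together on the event described,
$$J_{i} \leq \int \frac{v_{i}(y)^{2}}{2}\,(1+\exp(By))\,\widetilde{p}_{i+1}(y)\,dy,$$
which expands into a finite sum of terms of the form $A^{2}$ or $B^{2}$ multiplied by a moment of $|y-m_{i+1}|^{\ell}$ ($\ell=2,4$) under either $\widetilde{P}_{i+1}^{\,t_{i,n}}$ or $\widetilde{P}_{i+1}^{\,t_{i,n}+B}$. Each such moment is $\mathcal{O}_{\mathcal{P}_{n}}(1)$ by Fact $\ref{ppteMomentsTin}$ applied on a compact containing all the relevant $t_{i,n}+B$, yielding $J_{i}=\mathcal{O}_{\mathcal{P}_{n}}(A^{2}+B^{2})=\mathcal{O}_{\mathcal{P}_{n}}(1/(n-i-1)^{2})$ as required.

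The main obstacle is that $\exp(w_{i}(y))$ cannot be controlled by a naive bound like $\exp(|v_{i}(y)|)$, because the $\exp(\tfrac{A}{2}(y-m_{i+1})^{2})$ this would produce is not integrable against $\widetilde{p}_{i+1}(y)$. The resolution lies in the one-sided observation $v_{i}(y)\leq By$, which collapses the exponential to a linear exponential and lets the tilting identity stay inside $\Theta$; this is the decisive step.
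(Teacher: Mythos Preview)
Your proof is correct and takes a genuinely different route from the paper. Both proofs start from the same inequality $\exp(w_{i}(y))\le 1+\exp(v_{i}(y))$ in spirit, but the paper keeps the full $\exp(v_{i}(y))$ and then has to control $\int \tfrac{v_{i}(y)^{2}}{2}\exp(v_{i}(y))\widetilde{p}_{i+1}(y)\,dy$. It does so by splitting $\{|y|\le\alpha_{\epsilon}\}\cup\{|y|>\alpha_{\epsilon}\}$ on a high-probability event, bounding $v_{i}$ above by a quantity with a negative quadratic part on the tail, and finishing with an integration by parts that invokes assumption $(\mathcal{C}f)$. Your key simplification is the one-sided observation $v_{i}(y)\le By$ (since the quadratic part of $v_{i}$ is non-positive), which gives the sharper $\exp(w_{i}(y))\le 1+\exp(By)$; then the tilting identity $\exp(By)\widetilde{p}_{i+1}^{\,t_{i,n}}(y)=\tfrac{\Phi_{i+1}(t_{i,n}+B)}{\Phi_{i+1}(t_{i,n})}\widetilde{p}_{i+1}^{\,t_{i,n}+B}(y)$ converts everything into moments of a tilted law at a nearby parameter. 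Because $\max_{i}|B|=o_{\mathcal{P}_{n}}(1)$ and $\max_{i}|t_{i,n}|=\mathcal{O}_{\mathcal{P}_{n}}(1)$, the shifted parameters $t_{i,n}+B$ stay in a fixed compact of $\Theta$ on a high-probability event, so the moment and $\Phi$-ratio bounds follow from Fact~\ref{ppteMomentsTin} and $(\mathcal{AM}6)$. The upshot is that your argument is shorter, avoids any split of the domain, and does not need $(\mathcal{C}f)$ at this point (only the moment assumptions), whereas the paper's approach is more hands-on and leans on the smoothness assumption $(\mathcal{C}f)$ through the integration by parts.
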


\begin{proof}
We have that $|w_{i}(y)| \leq |v_{i}(y)|$. Moreover, $w_{i}(y)$ and $v_{i}(y)$ are actually of the same sign, so that $\exp(w_{i}(y)) \leq 1 + \exp(v_{i}(y))$. Therefore,  
\begin{equation}
J_{i} \leq J_{i}^{(1)}+J_{i}^{(2)}   \enskip \textrm{where} \enskip 
J_{i}^{(1)} := \int \frac{v_{i}(y)^2}{2} \widetilde{p}_{i+1}(y) dy
\enskip \textrm{and} \enskip 
J_{i}^{(2)} := \int \frac{v_{i}(y)^2}{2} \exp(v_{i}(y)) \widetilde{p}_{i+1}(y) dy.  
\end{equation}

\noindent
Now, expanding $v_{i}(y)$, we get readily that 
\begin{equation}
J_{i}^{(1)} = \frac{\mathcal{O}_{\mathcal{P}_n}(1)}{(n-i-1)^2}. 
\end{equation}

\noindent\\
Fix $\epsilon > 0$. 

\noindent\\
Then there exist $\alpha_{\epsilon}$, $\beta_{\epsilon}$, $\gamma_{\epsilon}$ positive and a compact $K_{\epsilon}$ s.t., for all $n$ large enough,  
\begin{equation*}
\mathcal{P}_n \left( B_{n}^{\epsilon} := \bigcap\limits_{i = 0}^{k-1} 
\left\{ t_{i} \in K_{\epsilon} \enskip ; \enskip |m_{i+1}| \leq \alpha_{\epsilon} \enskip ; \enskip \frac{1}{2s_{i+2,n}^2} \leq \frac{\beta_{\epsilon}}{n-i-1} \enskip ; \enskip
\left|\frac{3\alpha_{i+2,n}^{(3)}}{\sigma_{i+2,n}} \right| \leq \frac{\gamma_{\epsilon}}{n-i-1} \right\} \right) \geq 1 - \epsilon.
\end{equation*}

\noindent\\
\textit{The following lines hold on $B_{n}^{\epsilon}$}.

\noindent\\
For all real $y$, we have that  
\begin{equation}
|v_{i}(y)| \leq \frac{\beta_{\epsilon}(|y|+\alpha_{\epsilon})^2}{n-i-1} + \frac{\gamma_{\epsilon} |y|}{n-i-1}
\end{equation}

\noindent
For $|y| \geq \alpha_{\epsilon}$, we have that $|y - m_{i+1}| \geq |y - \alpha_{\epsilon}|$, so that 
\begin{equation}
v_{i}(y) \leq - \frac{\beta_{\epsilon} (y-\alpha_{\epsilon})^2}{n-i-1} + \frac{\gamma_{\epsilon} |y|}{n-i-1}. 
\end{equation}

\noindent
Therefore,  
\begin{align}
J_{i}^{(2)} & \leq \frac{1}{2(n-i-1)^2} \int_{|y| \leq \alpha_{\epsilon}} \left[ \beta_{\epsilon} (|y| + \alpha_{\epsilon})^2 + \gamma_{\epsilon} |y| \right]^{2} \exp(v_{i}(y)) \widetilde{p}_{i+1}(y)dy \\
& + \frac{1}{2(n-i-1)^2} \int_{|y| \geq \alpha_{\epsilon}} \left[ \beta_{\epsilon} (|y| + \alpha_{\epsilon})^2 + \gamma_{\epsilon} |y| \right]^{2} \exp\left( - \frac{\beta_{\epsilon} (y - \alpha_{\epsilon})^2}{n-i-1} + \frac{\gamma_{\epsilon} |y|}{n-i-1} \right) \widetilde{p}_{i+1}(y)dy. 
\end{align}

\noindent
Clearly, on $B_{n}^{\epsilon}$, the first integral hereabove is bounded by a constant $I_{\epsilon}$. For the second integral, an integration by parts and Assumption $(\mathcal{C}f)$ imply that, on $B_{n}^{\epsilon}$, it is also bounded by a constant $L_{\epsilon}$. So, 
\begin{equation}
J_{i}^{(2)} = \frac{\mathcal{O}_{\mathcal{P}_n}(1)}{(n-i-1)^2}, 
\end{equation}

\noindent
which concludes the proof.

\end{proof}

\bigskip

\noindent\\
Combining $(\ref{CiRecap})$, $(\ref{rrb})$ and $(\ref{rrc})$, we obtain that 
\begin{align*}
L_{i,n} &:= C_{i} \frac{\sigma_{i+1,n}}{\sigma_{i+2,n}} 
\exp \left(-\kappa_{i,n} m_{i+1} \right) \quad \textrm{where} \quad 
\kappa_{i,n} := \frac{3\alpha_{i+2,n}^{(3)}}{\sigma_{i+2,n}} \\
&= \left[ 1 + \kappa_{i,n} m_{i+1} - \frac{s_{i+1}^{2}}{2s_{i+2,n}^{2}} + \frac{\mathcal{O}_{\mathcal{P}_n}(1)}{(n-i-1)^2} \right] 
\left[ 1 + \frac{s_{i+1}^{2}}{2s_{i+2,n}^{2}} + \frac{\mathcal{O}_{\mathcal{P}_n}(1)}{(n-i-1)^2} \right]
\left[ 1 - \kappa_{i,n} m_{i+1} + 
\frac{\mathcal{O}_{\mathcal{P}_n}(1)}{(n-i-1)^2} \right] \\
&= \left[ 1 + \kappa_{i,n} m_{i+1} - \frac{s_{i+1}^{2}}{2s_{i+2,n}^{2}} + \frac{\mathcal{O}_{\mathcal{P}_n}(1)}{(n-i-1)^2} \right]
\left[ 1 - \kappa_{i,n} m_{i+1} + \frac{s_{i+1}^{2}}{2s_{i+2,n}^{2}} + \frac{\mathcal{O}_{\mathcal{P}_n}(1)}{(n-i-1)^2} \right] \\
&= 1 + \frac{\mathcal{O}_{\mathcal{P}_n}(1)}{(n-i-1)^2}. 
\end{align*}

\noindent\\
Therefore, we may write $L_{i,n} = 1 + \frac{W_{i,n}}{(n-i-1)^2}$, where 
$\max\limits_{0 \leq i \leq k-1} |W_{i,n}| = \mathcal{O}_{\mathcal{P}_n}(1)$. Then, we get from Lemma $\ref{devLimLandau}$ applied with $f : x \mapsto \log(1+x)$ that 
\begin{equation*}
\log(L_{i,n}) = \log \left(1 + \frac{W_{i,n}}{(n-i-1)^2} \right) 
= \frac{W_{i,n}}{(n-i-1)^2} + 
\left(\frac{W_{i,n}}{(n-i-1)^2}\right)^{2} \mathcal{O}_{\mathcal{P}_n}(1)
\end{equation*}

\noindent
Therefore, 
\begin{equation}
\log \left( \prod\limits_{i=0}^{k-1} L_{i,n} \right) = 
\sum\limits_{i=0}^{k-1} \log(L_{i}) = 
\mathcal{O}_{\mathcal{P}_n}(1) \sum\limits_{i=0}^{k-1} \frac{1}{(n-i-1)^2} = 
o_{\mathcal{P}_n}(1). 
\end{equation}

\noindent
Consequently,  
\begin{equation}
\prod\limits_{i=0}^{k-1} L_{i,n} = 
1 + o_{\mathcal{P}_n}(1). 
\end{equation}

\noindent
Finally, we have proved that there exists $(B_{n})_{n \geq 1} \in \mathcal{A}_{\rightarrow 1}$
s.t. for any $n \geq 1$,
\begin{equation*}
p_{k}(Y_{1}^{k}) = \prod\limits_{i=0}^{k-1} \Gamma_{i} \quad \textrm{on} \enskip B_{n}
\end{equation*}

\noindent
and 
\begin{equation*}
\prod\limits_{i=0}^{k-1} \Gamma_{i} = g_{k}(Y_{1}^{k}) \left[1 + o_{\mathcal{P}_n}(1)\right].
\end{equation*}

\end{document}